\documentclass[a4paper,10pt]{article}
\usepackage[utf8]{inputenc}
\usepackage[left=2.5cm, right=2.5cm, top=2cm]{geometry}
\usepackage{subfigure}
\usepackage{float}
\usepackage{amsmath}
\usepackage{mathtools}

\hbadness=10000 \vbadness=10000
\hfuzz=100pt

\usepackage[]{algorithm2e}
\usepackage{bbm} 
\usepackage{dsfont} 

\usepackage{commath} 

\usepackage{amsmath, amsthm, amssymb}
\usepackage{mathabx} 
\usepackage{mathrsfs}
\usepackage{url}
\usepackage{amssymb,amsfonts}
\usepackage{color}
\usepackage{shuffle}
\usepackage[usenames,dvipsnames,table]{xcolor}

\usepackage[linecolor=white,backgroundcolor=white,bordercolor=white,textsize=tiny]{todonotes}
\usepackage{breqn}

\usepackage{silence}
\WarningFilter{latex}{Marginpar on page} 
\WarningFilter{latex}{Label(s) may have changed} 

\usepackage[style=alphabetic,sorting=anyt,natbib,maxbibnames=99]{biblatex}
\usepackage{hyperref}
\hypersetup{
     colorlinks=true,
     linkcolor=blue,
     filecolor=blue,
     citecolor=cyan,      
     urlcolor=cyan,
     }
\usepackage[capitalize]{cleveref}
\usepackage{graphicx}
\usepackage[font={small,it}]{caption}
\usepackage[flushleft]{threeparttable}
\usepackage{longtable}
\usepackage[shortlabels]{enumitem}

\let\underbar\underline

\newtheorem{theorem}{Theorem}[section]
\theoremstyle{plain}

\newtheorem{corollary}[theorem]{Corollary}

\newtheorem{example}[theorem]{Example}

\newtheorem{lemma}[theorem]{Lemma}

\newtheorem{proposition}[theorem]{Proposition}
\newtheorem{remark}[theorem]{Remark}


\theoremstyle{definition} 

\newtheorem{definition}[theorem]{Definition}
\newtheorem*{tata}{Generalization}
  {\begin{mdframed}[backgroundcolor=lightgray]\begin{tata}}%
  {\end{tata}\end{mdframed}}

\parindent0pt           
\parskip1ex

\newcommand{\N}{\mathbb{N}}

\newcommand{\Q}{\mathbb{Q}}


\newcommand{\im}{\operatorname{im}}

\newcommand{\Hom}{\operatorname{Hom}}

\newcommand{\id}{\operatorname{id}}

\newcommand{\mute}[1]{}
\let\todon\todo
\renewcommand{\todo}[1]{\todon{\color{red}#1}}

\newcommand{\evaluatedAt}[1]{\,\raisebox{-.5em}{$\vert_{#1}$}}

\newcommand{\lrb}[1]{{\left \lbrace { {#1} } \right \rbrace }}
\newcommand{\lrp}[1]{{\left ( { {#1} } \right ) }}
\newcommand{\lrv}[1]{{\left | { {#1} } \right | }}
\newcommand{\lra}[1]{{\left \langle { {#1} } \right \rangle }}

\usepackage{float}
\addbibresource{references.bib}
\usepackage{scalerel}
\usepackage{tikz-cd}
\tikzcdset{scale cd/.style={every label/.append style={scale=#1},
    cells={nodes={scale=#1}}}}
\usetikzlibrary{matrix,arrows,backgrounds}
\newif\ifshowgraphs
\showgraphsfalse
\showgraphstrue

\ifshowgraphs

\newcommand{\DRAWgraphVERTEX}{
\raisebox{+.1em}{
\begin{tikzpicture}[scale=.75]
  \fill
    (0em,-1em) circle [radius=.2em];
\end{tikzpicture}}
}
\newbox{\graphVERTEX}
\sbox{\graphVERTEX}{\hspace{-.4em}\DRAWgraphVERTEX\hspace{-.2em}}
\newcommand{\vertex}{{\usebox{\graphVERTEX}}}

\newcommand{\DRAWgraphEDGE}{{
\raisebox{-.2em}{
\begin{tikzpicture}[scale=.75]
    \path[very thick]
    (0em,-1em) edge (0,-.334em);
    \fill
    (0em,-1em) circle [radius=.2em]
    (0em,-.334em) circle [radius=.2em];
\end{tikzpicture}
}
}
}
\newbox{\graphEDGE}
\sbox{\graphEDGE}{{\hspace{-.2em}\DRAWgraphEDGE\hspace{-1em}}}
\newcommand{\edge}{{\usebox{\graphEDGE}}}

\newcommand{\DRAWgraphCHERRY}{
\raisebox{-.2em}{
\begin{tikzpicture}[scale=.75]
  \path[very thick]
    (0em,-1em) edge (0.5em,-.134em)
    (0.5em,-.134em) edge (1em,-1em);
  \fill
    (0em,-1em) circle [radius=.2em]
    (0.5em,-.134em) circle [radius=.2em]
    (1em,-1em) circle [radius=.2em];
\end{tikzpicture}}
}
\newbox{\graphCHERRY}
\sbox{\graphCHERRY}{{\hspace{-.4em}\DRAWgraphCHERRY\hspace{-.2em}}}
\newcommand{\cherry}{{\usebox{\graphCHERRY}}}

\newcommand{\DRAWgraphTRIANGLE}{
\raisebox{-.2em}{
\begin{tikzpicture}[scale=.75]
  \path[very thick]
    (0em,-1em) edge (0.5em,-.134em)
    (0.5em,-.134em) edge (1em,-1em)
    (1em,-1em) edge (0em,-1em);
  \fill
    (0em,-1em) circle [radius=.2em]
    (0.5em,-.134em) circle [radius=.2em]
    (1em,-1em) circle [radius=.2em];
\end{tikzpicture} }
}
\newbox{\graphTRIANGLE}
\sbox{\graphTRIANGLE}{{\hspace{-.6em}\DRAWgraphTRIANGLE\hspace{-.2em}}}
\newcommand{\tria}{{\usebox{\graphTRIANGLE}}}

\newcommand{\DRAWgraphTAILEDtriangle}{
\raisebox{-.2em}{
\begin{tikzpicture}[scale=.9]
  \draw[very thick]
	(0.0em,0em) -- (0.0em,-1em)
	(0.0em,-1em) -- (1.0em,-1em)
	(1.0em,-1em) -- (1.0em,0em)
	(1.0em,0em) -- (0.0em,-1em);
\fill (0.0em,0em) circle [radius=.2em];
\fill (0.0em,-1em) circle [radius=.2em];
\fill (1.0em,-1em) circle [radius=.2em];
\fill (1.0em,0em) circle [radius=.2em];
\end{tikzpicture}
}
}
\newbox{\graphTAILEDtriangle}
\sbox{\graphTAILEDtriangle}{{\hspace{-.4em}\DRAWgraphTAILEDtriangle\hspace{-.4em}}}
\newcommand{\tailedTriangle}{{\usebox{\graphTAILEDtriangle}}}

\newcommand{\DRAWgraphSWING}{
\raisebox{-.2em}{
\begin{tikzpicture}[scale=.9]
  \draw[very thick]
	(0.0em,0em) -- (0.0em,-1em)
	(0.0em,-1em) -- (1.0em,-1em)
	(1.0em,-1em) -- (1.0em,0em);
\fill (0.0em,0em) circle [radius=.2em];
\fill (0.0em,-1em) circle [radius=.2em];
\fill (1.0em,-1em) circle [radius=.2em];
\fill (1.0em,0em) circle [radius=.2em];
\end{tikzpicture}
}
}
\newbox{\graphSWING}
\sbox{\graphSWING}{{\hspace{-.4em}\DRAWgraphSWING}}
\newcommand{\threeLadder}{{\usebox{\graphSWING}}}

\newcommand{\DRAWgraphCLAW}{
\raisebox{-.2em}{
\begin{tikzpicture}[scale=.9]
  \draw[very thick]
	(0.0em,0em) -- (0.0em,-1em)
	(0.0em,-1em) -- (1.0em,-1em)
	(1.0em,0em) -- (0.0em,-1em);
\fill (0.0em,0em) circle [radius=.2em];
\fill (0.0em,-1em) circle [radius=.2em];
\fill (1.0em,-1em) circle [radius=.2em];
\fill (1.0em,0em) circle [radius=.2em];
\end{tikzpicture}
}
}
\newbox{\graphCLAW}
\sbox{\graphCLAW}{{\hspace{-.4em}\DRAWgraphCLAW\hspace{-.6em}}}
\newcommand{\threeStar}{{\usebox{\graphCLAW}}}

\newcommand{\DRAWgraphSQUARE}{
\raisebox{-.2em}{
\begin{tikzpicture}[scale=.9]
  \draw[very thick]
	(0.0em,0em) -- (0.0em,-1em)
	(0.0em,-1em) -- (1.0em,-1em)
	(1.0em,-1em) -- (1.0em,0em)
	(1.0em,0em) -- (0.0em,0em);
\fill (0.0em,0em) circle [radius=.2em];
\fill (0.0em,-1em) circle [radius=.2em];
\fill (1.0em,-1em) circle [radius=.2em];
\fill (1.0em,0em) circle [radius=.2em];
\end{tikzpicture}
}
}
\newbox{\graphSQUARE}
\sbox{\graphSQUARE}{{\hspace{-.6em}\DRAWgraphSQUARE\hspace{-.6em}}}
\newcommand{\cyclefour}{{\usebox{\graphSQUARE}}}


\newcommand{\diam}{
\raisebox{-.2em}{
\begin{tikzpicture}[scale=.9]
  \draw[very thick]
	(0.0em,0em) -- (0.0em,-1em)
	(0.0em,0em) -- (1.0em,-1em)
	(0.0em,-1em) -- (1.0em,-1em)
	(1.0em,-1em) -- (1.0em,0em)
	(1.0em,0em) -- (0.0em,-1em);
\fill (0.0em,0em) circle [radius=.2em];
\fill (0.0em,-1em) circle [radius=.2em];
\fill (1.0em,-1em) circle [radius=.2em];
\fill (1.0em,0em) circle [radius=.2em];
\end{tikzpicture}
}
}

\newcommand{\Kfour}{
\raisebox{-.2em}{
\begin{tikzpicture}[scale=.9]
  \draw[very thick]
	(0.0em,0em) -- (0.0em,-1em)
	(0.0em,0em) -- (1.0em,-1em)
	(0.0em,-1em) -- (1.0em,-1em)
	(1.0em,-1em) -- (1.0em,0em)
	(0.0em,0em) -- (1.0em,0em)
	(1.0em,0em) -- (0.0em,-1em);
\fill (0.0em,0em) circle [radius=.2em];
\fill (0.0em,-1em) circle [radius=.2em];
\fill (1.0em,-1em) circle [radius=.2em];
\fill (1.0em,0em) circle [radius=.2em];
\end{tikzpicture}
}
}

\else
\include{smallgraphsdummy}
\fi
\usepackage{accents}

\usepackage{soul}

\usepackage{cleveref}
\usepackage{enumitem}
\setlist[enumerate]{label=\roman*.}
\setlist[enumerate,2]{label=\arabic*.}

\usepackage{scalerel,stackengine}
\stackMath
\newcommand\reallywidehat[1]{%
\savestack{\tmpbox}{\stretchto{%
  \scaleto{%
    \scalerel*[\widthof{\ensuremath{#1}}]{\kern-.6pt\bigwedge\kern-.6pt}%
    {\rule[-\textheight/2]{1ex}{\textheight}}
  }{\textheight}%
}{0.5ex}}%
\stackon[1pt]{#1}{\tmpbox}%
}


\newcommand\Cat{{\mathcal{C}}}
\newcommand\FinGraph{{\mathcal{G}}}

\newcommand\LFinGraph{{\langle \FinGraph \rangle_\Q}}
\newcommand\FinGraphNo{\widetilde{\mathcal{G}}}
\newcommand\LFinGraphNo{{\langle \FinGraphNo \rangle_\Q}}

\newcommand\LA{\Big\langle}
\newcommand\RA{\Big\rangle}
\newcommand{\curlyH}{{\mathcal{H}}}

\definecolor{darkgray}{HTML}{636363}

\newcommand{\ka}{\textbf{k}}
\newcommand\e{\mathsf{e}} 
\newcommand\disj{\dot\cup}    
 
\newcommand\mor{\operatorname{mor}}
\DeclareMathSymbol{\mlqq}{\mathrel}{operators}{"5C}
\DeclareMathSymbol{\mrqq}{\mathrel}{operators}{`"}

\newcommand\arr[1]{\xrightarrow{\ #1\ }}
\newcommand\qs{\stackrel{\mathsf{qs}}\shuffle}

\newcommand\Aut{\operatorname{Aut}}
\newcommand\Iso{\operatorname{Iso}}
\newcommand\Epi{\operatorname{Epi}}
\newcommand\RegEpi{\operatorname{RegEpi}}
\newcommand\Mono{\operatorname{Mono}}
\newcommand\RegMono{\operatorname{RegMono}}
\newcommand\mono{{\mathsf{mono}}} %
\newcommand\regmono{{\mathsf{regmono}}} %
\renewcommand\hom{{\mathsf{hom}}}
\newcommand\GCmonoprime{{\mathtt{GC}^{\mathsf{\mono^{\prime}}}}}
\newcommand\GCregmonoprime{{\mathtt{GC}^{\regmono^{\prime}}}}
\newcommand\GChom{{\mathtt{GC}^\hom}}
\newcommand\GChomprime{{\mathtt{GC}^{\mathsf{hom^{\prime}}}}}
\newcommand\GCmono{{\mathtt{GC}^{\mathsf{mono}}}}
\newcommand\GCregmono{{\mathtt{GC}^{\mathsf{regmono}}}}

\title{Signatures of graphs for bicommutative Hopf algebras}

\author{Diego Caudillo$^1$, Joscha Diehl$^2$, Kurusch Ebrahimi-Fard$^1$, Emanuele Verri$^2$}
\date{ \small
    $^1$\textit{Department of Mathematical Sciences, Norwegian University of Science and Technology}\\
    [1ex]
    $^2$\textit{Institute of Mathematics and Computer Science, University of Greifswald}\\[2ex]%
    
}


\begin{document}

\maketitle

\begin{abstract}

This article approaches the counting
of subgraphs,
in terms of signature-type functionals defined over
combinatorial Hopf algebras of graphs.
Well-known algebraic identities that arise in the context of counting subgraphs
are then captured by their character property and a type of ``Chen's identity''.
While different notions of subgraphs (and homomorphisms) correspond
to different combinatorial Hopf algebras on graphs, we will show that they are
all isomorphic to a polynomial Hopf algebra.
In addition, the isomorphy between the Hopf algebras
can be realized by maps that respect the counting operations.

\smallskip

\textbf{Keywords: } \textit{Combinatorial Hopf algebras, graph theory, graph counting, graph products, signature, Chen's identity }
\end{abstract}

\tableofcontents

\newpage
\section*{Introduction}

We propose to consider the number of times $c_\sigma(\Lambda)$
a
graph $\sigma$ appears as a substructure of a graph $\Lambda$ as a \emph{feature} of the latter.
We store these features, i.e., the numbers $c_\sigma(\Lambda)$, in the so-called
\textit{graph signature} $\mathtt{GC}$ of the graph $\Lambda$ 
\begin{align*}
    \mathtt{GC}(\Lambda):= \sum_{\gamma}c_{\gamma}(\Lambda)\gamma^*.
\end{align*}
It is a linear functional defined on a certain (Hopf) algebra of graphs.
A basis of this Hopf algebra is given by graphs $\gamma$,
and $\gamma^*$ denotes elements of the dual basis.

Here, we were purposefully vague when speaking of ``substructures''.
In fact there are several relevant ways of considering $\sigma$ a substructure of $\Lambda$.
\begin{itemize}
  \item There exists a morphism $\sigma \to \Lambda$.
  \item There exists an injective morphism $\sigma \to \Lambda$.
  \item Restricted to a subset $A$ of the edges, $\Lambda\evaluatedAt{A}$ is
    isomorphic to $\sigma$.
  \item The graph induced by a subset $B$ of the vertices, $\Lambda_B$ is
    isomorphic to $\sigma$.
\end{itemize}
Moreover one has to decide whether one wants to count multiplicities
of the automorphism group of $\sigma$
(as is done, when considering injective morphisms)
or not (as is done, when counting isomorphic copies obtained by edge-restriction).

Several of these incarnations of counting operations
have appeared in the literature, as is summarized below.
We bring additional structure to this assortment of feature maps
by establishing them as characters on certain Hopf algebras.

In particular,
\begin{itemize}

  \item
    We realize several counting operations (some of the well-known, some of them less-known)
    as characters on bicommutative Hopf algebras.

  \item
    We establish that all these Hopf algebras are isomorphic
    (to a polynomial Hopf algebra).

  \item
    The translation between the different counting characters
    are realized as Hopf isomorphisms.

\end{itemize}

\textbf{Related work}
The procedure of storing features of some data structure
as a character on some Hopf algebra originates
in combinatorics \cite{aguiar2006combinatorial}
and rough path theory
\cite{lyons1998differential,biau2020learning,chevyrev2018signature,diehl2020iterated,diehl2020generalized,kiraly2019kernels}.

The idea of associating a graph $\Lambda$ to its sequence of numbers of
homomorphisms is well-established in the literature, see for instance
\cite[Thm.~3.6]{lovasz1967operations}, \cite[Thm.~5.29]{lovasz2012large} and
the introduction of \cite{borgs2006counting}, where it is called the
\textit{profile} of a graph
(see also \cite{penaguiao2020pattern}).

\subsection*{Outline}

The paper is structured as follows. In \cref{sec:HA_graphs}, we recall basic
notions about graphs, in particular the types of subgraphs and graph
homomorphisms which will be used throughout the text.  We then endow graphs
with graded linear structures and introduce a variety of products: $m_{\hom}$,
$m_{\hom^{\prime}}$, $m_{\mono}$, $m_{\mono^{\prime}}$, $m_{\regmono}$,
$m_{\regmono^{\prime}}$ and their respective dual coproducts.

Some of the products and coproducts form filtered or (graded) Hopf algebras, as
we show in \cref{ssec:OurBialgebrasOnGraphs}.

In \cref{sec:Isom_poly_HA}, we show that these Hopf algebras are isomorphic to
the polynomial Hopf algebra on connected graphs. This is an application of
Samuelson--Leray's theorem.
As a consequence, we also obtain that the
underlying algebras are free commutative over connected graphs.

In \cref{sec:Signatures_translation}, we introduce counting-signatures defined
over these Hopf algebras.  First, in~\cref{ssec:CountingDefinitions}, we
introduce three different homomorpism numbers, which are respectively
based on: morphism, monomorphisms and regular monomorphism. The terminology
stems from category theory, as explained in \cref{sssec:
categoryfinitesimplegraphs}. We denote them $c_{\sigma}^{\hom}(\Lambda)$,
$c_{\sigma}^{\mono}(\Lambda)$, and $c_{\sigma}^{\regmono}(\Lambda)$.
Three other counting operations are defined analogously except that we divide them by
the cardinality of $\Aut(\sigma)$. We denote them as
$c_{\sigma}^{\hom^{\prime}}(\Lambda)$, $c_{\sigma}^{\mono^{\prime}}(\Lambda)$,
and $c_{\sigma}^{\regmono^{\prime}}(\Lambda)$.
They allow us to count the
occurrences of edge-restricted subgraphs and vertex-induced subgraphs as well.
In
\cref{ssec:Signatures} we introduce three signature-type objects denoted
$\GChom(\Lambda)$, $\GCmono(\Lambda)$, $\GCregmono(\Lambda)$, and also
$\GChomprime(\Lambda)$, $\GCmonoprime(\Lambda)$, and
$\GCregmonoprime(\Lambda)$. They encode the counting operations as linear
functionals over graphs and satisfy both a character property as well as 
a version of Chen's identity.

Finally, in \cref{ssec:translating}, translations between counting characters
are formulated in terms of Hopf algebra isomorphisms, including those already
known in the computer science literature.
This also gives an alternative proof of the isomorphy between these Hopf algebras.

\section*{Acknowledgements}

The authors would like to thank Harald Oberhauser for pointing out the paper
\cite{bib:BNH2015} and for many fruitful discussions on signature-type objects.

All authors benefited from the mobility fund of the project Pure Mathematics in
Norway, funded by the Trond Mohn Foundation and Tromsø Research Foundation. The
third author is supported by the Research Council of Norway through project
302831 Computational Dynamics and Stochastics on Manifolds (CODYSMA). Many
discussions took place in the installations of the Alfried Krupp
Wissenschaftskolleg in Greifswald, Germany, and The Erwin Schrödinger
International Institute for Mathematics and Physics in Vienna, Austria.

\section{Hopf algebras of graphs}
\label{sec:HA_graphs}

In this section, we introduce several algebras and coalgebras on graphs and determine which combination of them form Hopf algebras.
We start by introducing graphs, subgraphs and the free vector space on graphs. For a brief introduction on Hopf algebras and related concepts used throughout the text, we refer to \cref{ssec:back_Hopf} and the references cited there.

\subsection{Simple graphs}
\label{ssec:graphs}

The combinatorial objects under consideration in this work are \textbf{simple graphs}. Recall that a graph $\tau$ consists of a pair $(V,E)$ where $V=V(\tau)$ is the set of vertices and $E=E(\tau) \subseteq [V]^{2}$ is the set of edges; $[V]^{2}$ denotes the set of all 2-element subsets of $V$\footnote{Note that this definition allows for isolated vertices,
but no parallel edges and no self-loops.}. We denote the \textbf{empty graph} $\e:=(\emptyset,\emptyset)$. The graphs considered in this paper are finite, i.e., $|V| < \infty$.
We now define subgraphs. 

\begin{definition}
Given a subset of vertices $U\subseteq V(\tau)$ and a subset of edges $A \subseteq E(\tau)$, such that $\cup A \subseteq U$, we define the \textbf{subgraph}
\begin{align*}
  \tau\evaluatedAt{U,A} &:= \lrp{ U , A }.
\end{align*}
Two important special cases are as follows.

First, given a collection of vertices $U\subseteq V(\tau)$, we denote the \textbf{vertex-induced subgraph} by $\tau_U$, where $V(\tau_U) = U$ and
\begin{align*}
    E(\tau_{U}) &:= 
    \lrb{ e \in E(\tau) \,|   e \subset U}.
\end{align*}

Second, given $A\subset E(\tau)$, we denote the \textbf{edge-restricted} subgraph with $\tau\evaluatedAt{A}$, where 
\begin{align*}
    E\left(\tau\evaluatedAt{A}\right)=A
    \quad \text{and} \qquad
    V\Big(\tau\evaluatedAt{A}\Big) &:= \bigcup A. 
\end{align*}

\end{definition}

Given two sets $S_{1}$ and $S_{2}$, we write $S_{1}\;\disj\;S_{2}$ for their (external) disjoint union. We define the \textbf{disjoint union of graphs}, $\sigma$ and $\tau$, as follows:
\begin{align}
\label{disj_prod}
    \sigma \sqcup \tau := 
    \left( V(\sigma) \disj V(\tau) \>,\> E(\sigma) \disj E(\tau) \right).
\end{align}

\subsection{Graph homomorphisms}
\label{ssec:grhom}
\begin{definition}
Let $\sigma,\tau$ be graphs, a map $f: V(\sigma)\to V(\tau)$
is a \textbf{graph homomorphism} if for all $i,j\in V(\sigma)$
\begin{align*}
  \{i,j\} \in E(\sigma) \Rightarrow \{f(i),f(j)\} \in E(\tau).
\end{align*}
For a graph homomorphism $f$ we also write $f: \sigma \to \tau$ instead of $f: V(\sigma)\to V(\tau)$.
Accordingly $\im_V f := f(V(\sigma))$ is the image of the vertex set.
By slight abuse of notation, we also denote by $f$ the induced map on edges, in particular, 
\begin{align*}
  f( E(\sigma) ) = \{ \{f(i),f(j)\} \mid \{i,j\} \in E(\sigma) \} \subset E(\tau).
\end{align*}

We call $f$ a \textbf{graph isomorphism} if it is a bijection
and for all $i,j\in V(\sigma)$
\begin{align*}
  \{i,j\} \in E(\sigma) \Leftrightarrow \{f(i),f(j)\} \in E(\tau).
\end{align*}
If there exists a graph isomorphism $f:\sigma\to\tau$, then we write $\sigma\cong\tau$. 
\end{definition}
The notion of isomorphism allows us to consider the set of equivalence classes of graphs, denoted with $\FinGraph$. We will refer to graphs when considering both graphs or
equivalence classes of graphs. The meaning will hopefully be clear from the context. The following families of graph homomorphisms will be important for the rest of the text.
\begin{itemize}
\item 
$\Hom(\sigma,\tau)$ is the set of all graph homomorphisms.
\item
$\Mono(\sigma,\tau)$ is the set of all injective (on vertices) graph homomorphisms.
\item
$\RegMono(\sigma,\tau)$ is the set of all graph embeddings, i.e.~$f \in\Hom(\sigma,\tau)$ and $f: \sigma \to \tau_{f(V(\sigma))}$ is an isomorphism.

\item
$\Epi(\sigma,\tau)$ the set of all surjective (on vertices) graph homomorphisms.
\item
$\RegEpi(\sigma,\tau)$ the set of all surjective (on vertices) graph homomorphisms
such that the induced function on edges is also surjective.
\item
$\Iso(\sigma,\tau)$ the set of all graph isomorphisms  between $\sigma$ and $\tau$.
We also denote by $\Aut(\sigma) = \Iso(\sigma,\sigma)$ the group of automorphisms of $\sigma$.
\end{itemize}
The given nomenclature originates in category theory, see \Cref{ssec:category}. 

    \begin{figure}[H]       
        \small \it
        \begin{minipage}{\linewidth}
            \begin{minipage}{0.3\linewidth} \centering
            \includegraphics[height=3cm]{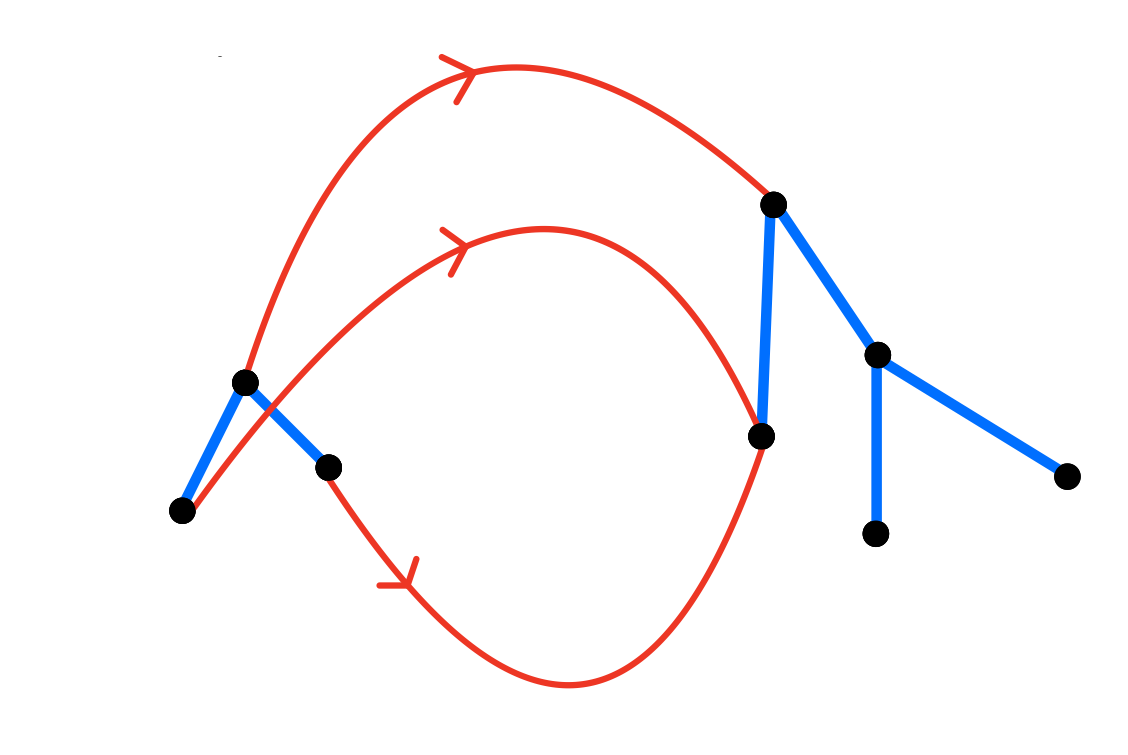}
            \end{minipage}
            \begin{minipage}{0.3\linewidth} \centering
            \includegraphics[height=3cm]{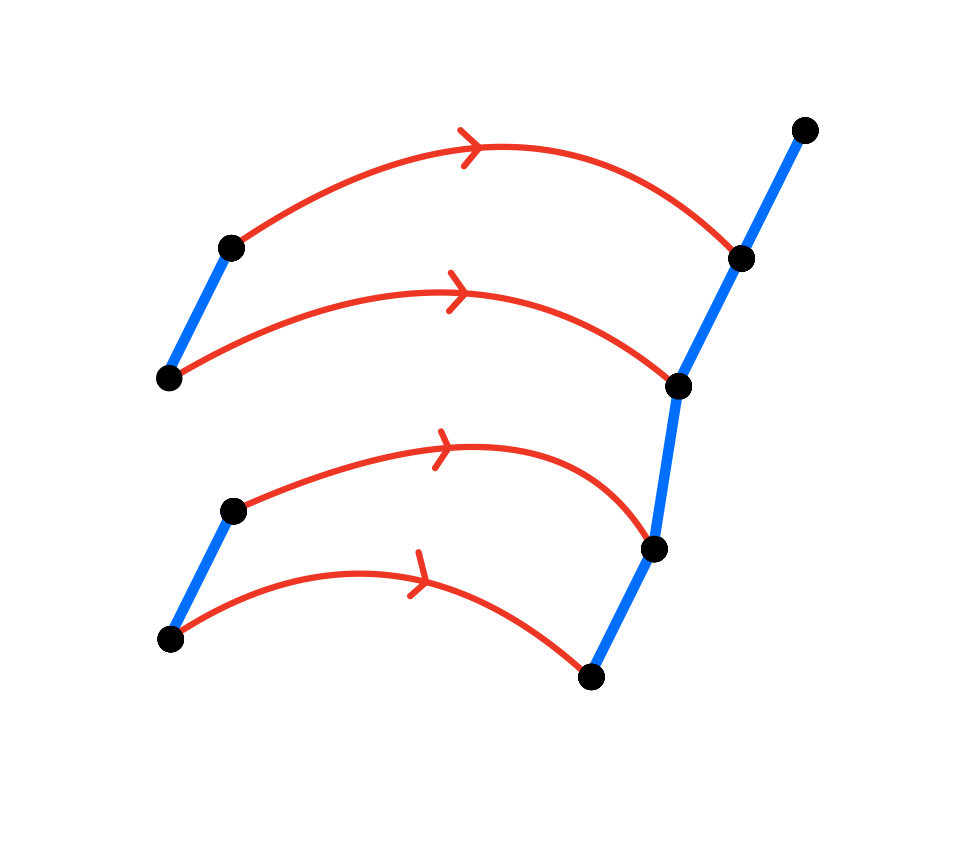}
            \end{minipage}
            \begin{minipage}{0.3\linewidth} \centering
            \includegraphics[height=3cm]{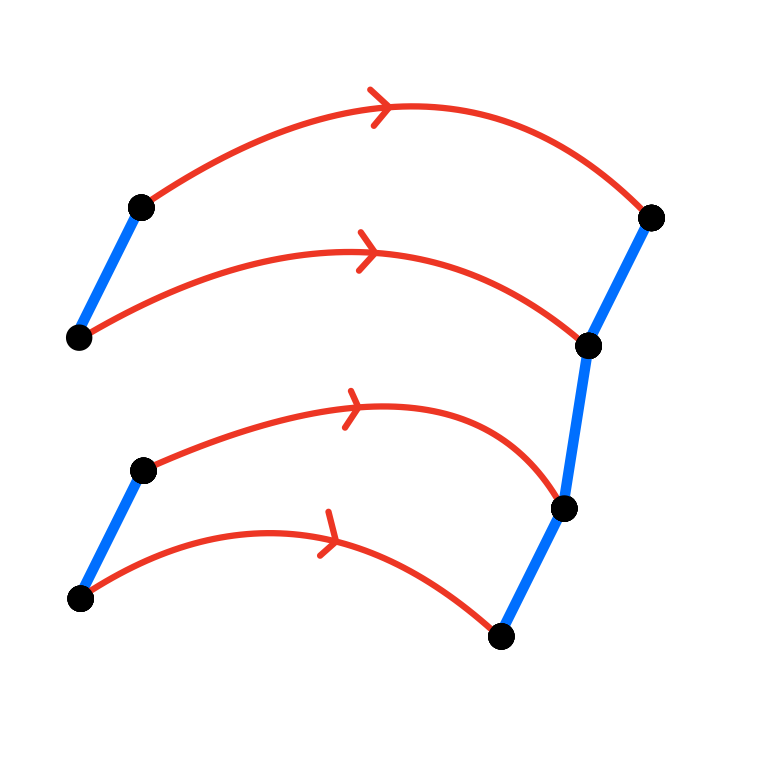}
            \end{minipage}
        \end{minipage}  
        \begin{minipage}{\linewidth}
            \begin{minipage}{0.3\linewidth} \centering
               graph homomorphism
            \end{minipage}
            \begin{minipage}{0.3\linewidth} \centering
                monomorphism which is not a regular monomorphism
            \end{minipage}
            \begin{minipage}{0.3\linewidth} \centering
            epimorphism which is not a regular epimorphism
            \end{minipage}
        \end{minipage}  
        
        \begin{minipage}{\linewidth}
            \begin{minipage}{0.3\linewidth} \centering
            \includegraphics[height=3cm]{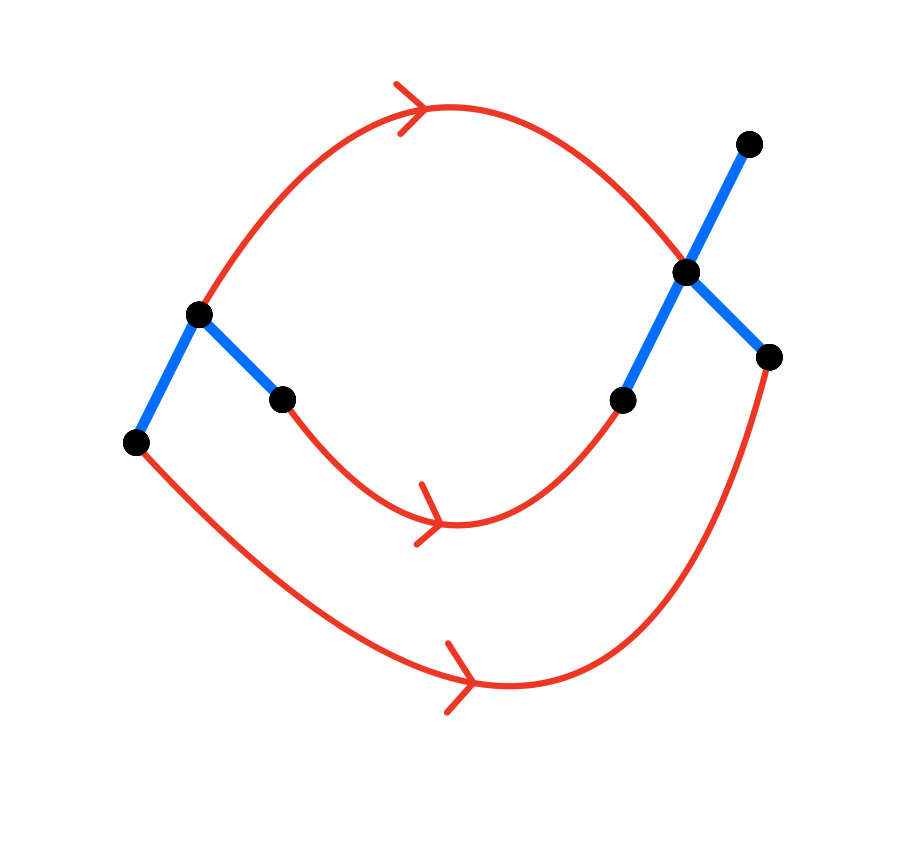}
            \end{minipage}
            \begin{minipage}{0.3\linewidth} \centering
            \includegraphics[height=3cm]{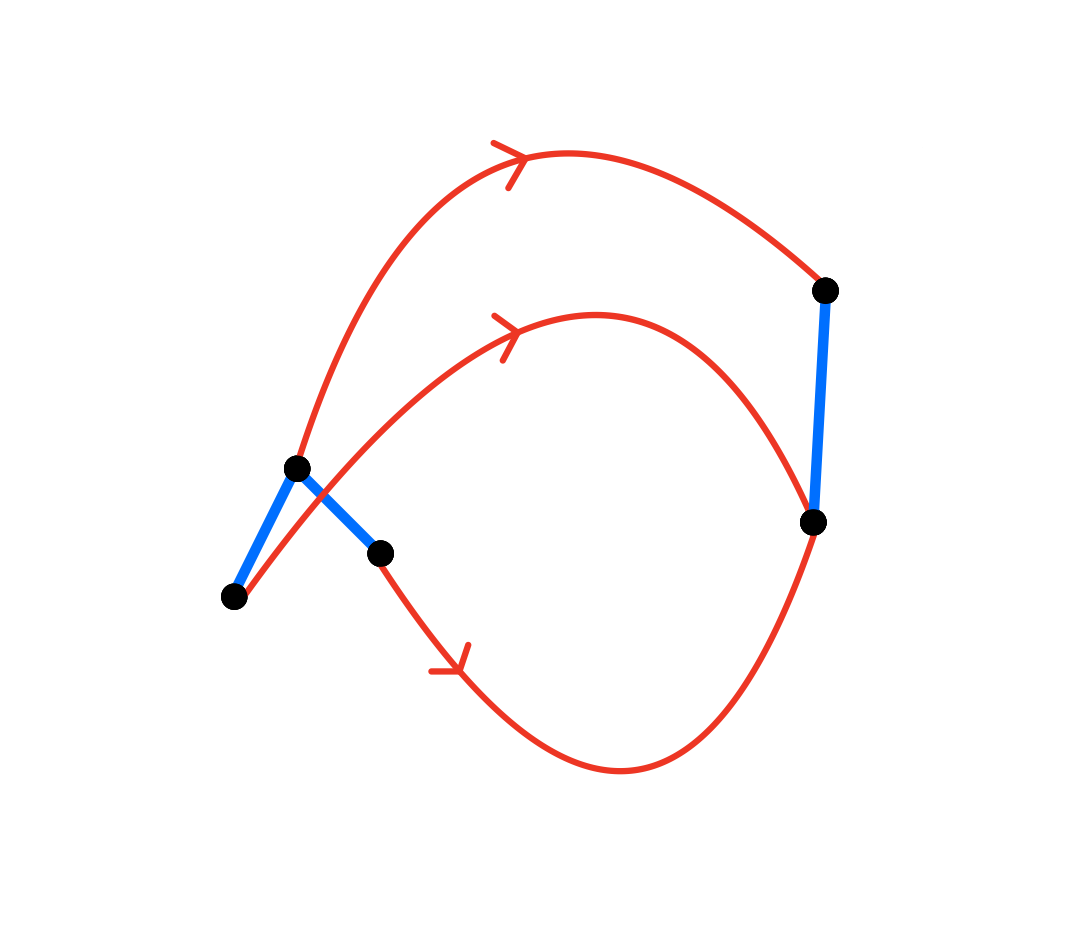}
            \end{minipage}
            \begin{minipage}{0.3\linewidth} \centering
            \includegraphics[height=3cm]{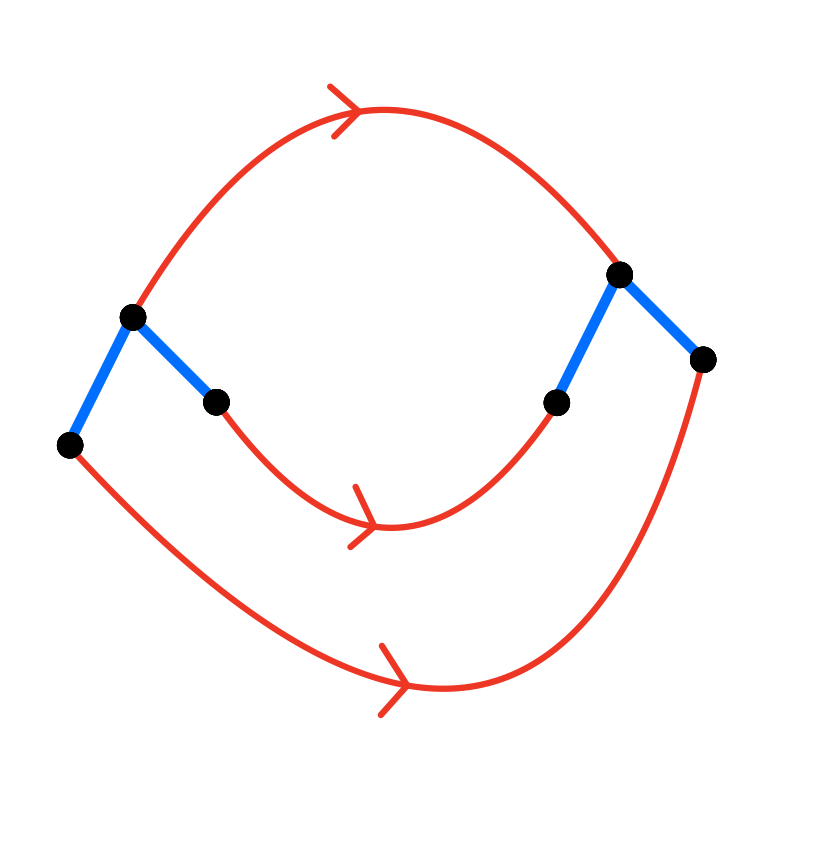}
            \end{minipage}
        \end{minipage}  
        \begin{minipage}{\linewidth}
            \begin{minipage}{0.3\linewidth} \centering
            regular monomorphism
            \end{minipage}
            \begin{minipage}{0.3\linewidth} \centering
             regular epimorphism
            \end{minipage}
            \begin{minipage}{0.3\linewidth} \centering
            isomorphism
            \end{minipage}
        \end{minipage}
         \caption{Examples of the classes of graph homomorphisms.}
    \end{figure}

\subsection{Free vector space on graphs}
\label{ssec:free_vector}
Let $\FinGraph$ be the set of equivalence classes of isomorphic graphs. We denote by $\LFinGraph$ the \textbf{free vector space} spanned by the elements in $\FinGraph$, i.e., formal \emph{finite} linear combinations of graphs with coefficients in $\Q$, the rational numbers. Let $\FinGraph_{n}$ be the set of equivalence classes of graphs with $n$ vertices. We can define a grading on $\LFinGraph$ by

\begin{align}
\label{eq:vertexCountGrading}
    \LFinGraph = \bigoplus_{n \ge 0}\langle\FinGraph_{n}\rangle_\Q.
\end{align}

This vector space is connected since the zero level is spanned by the singleton $\FinGraph_{0}= \lrb{\e}$. Moreover, it is \textit{locally finite}, i.e., each vector space appearing in the direct sum is finite-dimensional. 

In the following we will also consider the dual space $\LFinGraph^{*} = (\bigoplus_{n \ge 0}\langle\FinGraph_{n}\rangle_\Q)^{*}$.
Note that $\LFinGraph^{*}$
can be identified with the space of formal, \emph{infinite} linear combinations
$\sum_{\tau}c_{\tau}\tau^* \in \LFinGraph^{*}$,
whose pairing
with
an element
$\sum_{\sigma}d_{\sigma}\sigma \in \LFinGraph$
is defined as
\begin{align}
\label{pairing}
    \lra{\sum_{\tau}c_{\tau}\tau^*,\sum_{\sigma}d_{\sigma}\sigma} := \sum_{\gamma}c_{\gamma}d_{\gamma}.
\end{align}
The graded dual $\LFinGraph^{\circ} := \bigoplus_{n \ge 0}(\langle\FinGraph_{n}\rangle_\Q)^{*} \subset \LFinGraph^{*}$ will be used too. 

While we sometimes denote elements in $\LFinGraph^{*}$ with $\tau^*$ or $\delta_\tau$, when it is clear from the context, we will use $\tau$.

\subsection{Algebras and coalgebras on graphs}
\label{ssec:CatalogOfAlgebras}

We introduce several coproducts (or products),
that can be broadly classified into three groups:
coproducts that only consider the connected components
of a graph, coproducts that relate to edge-restricted subgraphs
and coproducts that relate to vertex-induced subgraphs.

\subsubsection{Coproducts related to the graph's connected components and polynomial algebras of graphs}
\label{sssec:symgraph}

Consider the operation in \eqref{disj_prod}, and extend it linearly to a product
\begin{align*}
            &\cdot_{\hom}: \LFinGraph \otimes \LFinGraph   \to \LFinGraph.
\end{align*}
The notation $\cdot_{\hom}$ reflects the compatibility with homomorphism counting (see \Cref{thm:character property} below).

    \begin{example}
    \begin{align*}
            \edge \cdot_{\hom} \edge \cherry &= \edge \edge \cherry
    \end{align*}
\end{example}
Since $\cdot_{\hom}$ is a graded map, we can use \eqref{eq:dual_alg} in Appendix \ref{sec.DualBialgebras} to define a corresponding coproduct:

\begin{definition}[\textbf{co-disjoint union}]
    \begin{align*}
            \Delta_\hom: \LFinGraph &\to \LFinGraph \otimes \LFinGraph\\
            \Delta_\hom(\tau) &:=\sum_{
            (\tau_{1},\tau_{2}) \in \FinGraph \times  \FinGraph} \langle \tau , \tau_{1} \cdot_{\hom} \tau_{2} \rangle\ \tau_{1} \otimes \tau_{2}.
        \end{align*}

        \begin{example}
          \begin{align*}
            \Delta_\hom(\! \edge )      &= \e \otimes\! \edge + \edge \otimes \e \\
            \Delta_\hom(\! \edge\edge ) &= \e \otimes\! \edge + \edge \otimes\! \edge + \edge \otimes \e\\
            \Delta_\hom(\! \edge\cherry ) &= \e \otimes\! \edge\cherry + \edge \otimes\! \cherry + \cherry\otimes\! \edge  
            + \edge\cherry \otimes \e.
          \end{align*}
        \end{example}
\end{definition}

\begin{remark}
Note that $\Delta_\hom$ only ``sees'' connected components. By a slight abuse of notation we can write a graph $\tau = \tau_{1}^{q_{1}} \cdots \tau_{m}^{q_{m}} \in \FinGraph$, where the $q_{i}$'s tell us the multiplicity of the respective connected graph. Then, we can write
\begin{align}
\label{eq:expl_disj_union}
    \Delta_\hom(\tau) &= \sum_{\substack{0 \le p_{1} \le q_{1} \\ \vdots \\ 0 \le p_{m} \le q_{m}\\ }}\tau_{1}^{p_{1}} \cdots  \tau_{m}^{p_{m}} \otimes \tau_{1}^{(q_{1} - p_{1})} \cdots   \tau_{m}^{(q_{m} - p_{m})}.
\end{align}
where $\tau^{0} := \e $.
 \end{remark}  
 
We now introduce a coproduct that turns the polynomial algebra into a bialgebra.

\begin{definition}
        On $\LFinGraph$, define for connected graphs $\tau$
        \begin{align*}
            \Delta_{\hom^{\prime}}(\tau) &:=\tau \otimes \e + \e \otimes \tau,
        \end{align*}
        and extend it as an algebra morphism for the $\cdot_{\hom}$ product. %
        \begin{example}
        \begin{align*}
            \Delta_{\hom^{\prime}}(\! \edge )      
            &= \e \otimes\! \edge + \edge \otimes \e \\
            \Delta_{\hom^{\prime}}(\! \edge \cdot_{\hom}\edge ) 
            &=  \Delta_{\hom^{\prime}}(\! \edge) \cdot_{\hom} \Delta_{\hom^{\prime}}(\! \edge)
            \\&=  (\e \otimes\! \edge + \edge \otimes \e) \cdot_{\hom} (\e \otimes\! \edge + \edge \otimes \e) 
            =  \e \otimes\! \edge\edge + 2 \edge \otimes\! \edge + \edge\edge \otimes \e.
          \end{align*}
          \end{example}
        \end{definition}
        
\begin{remark}
   Like $\Delta_{\hom}$, the coproduct $\Delta_{\hom^{\prime}}$ only ``sees'' connected components. Again we write $\tau = \tau_{1}^{q_{1}} \cdots \tau_{m}^{q_{m}} \in \FinGraph$, where the $q_{i}$'s tell us the multiplicity of the respective connected graph. It follows then
\begin{align*}
    \Delta_{\hom^{\prime}}(\tau_{1}^{ q_{1}}  \cdots  \tau_{m}^{q_{m}}) &=  \Delta_{\hom^{\prime}}(\tau_{1}^{ \cdot_{\hom} q_{1}} \cdot_{\hom} \cdots  \cdot_{\hom} \tau_{m}^{\cdot_{\hom} q_{m}})\\
    &= (\e \otimes \tau_{1}+\tau_{1}\otimes \e)^{\cdot_{\hom} q_{1}} \cdot_{\hom} \cdots \cdot_{\hom} (\e \otimes \tau_{m}+ \tau_{m}\otimes \e)^{\cdot_{\hom} q_{m}}.
\end{align*} 
which yields:
\begin{align*}
    \Delta_{\hom^{\prime}}(\tau) &:= \sum_{\substack{0 \le p_{1} \le q_{1} \\ \vdots \\ 0 \le p_{m} \le q_{m}\\ }}\binom{q_{1}}{p_{1}}\cdots\binom{q_{m}}{p_{m}}\tau_{1}^{p_{1}} \cdots  \tau_{m}^{p_{m}} \otimes \tau_{1}^{(q_{1} - p_{1})} \cdots  \tau_{m}^{(q_{m} - p_{m})}.
\end{align*}

Notice that \begin{align*}\frac{|\Aut(\tau_{1}^{q_{1}} \cdots   \tau_{m}^{ q_{m}})|}{|\Aut(\tau_{1}^{ p_{1}}  \cdots  \tau_{m}^{p_{m}})||\Aut(\tau_{1}^{(q_{1} - p_{1})}  \cdots  \tau_{m}^{(q_{m} - p_{m})})|} = \binom{q_{1}}{p_{1}}\cdots\binom{q_{m}}{p_{m}}.\end{align*}
\end{remark}
We can define a corresponding product $\cdot_{\hom^{\prime}}: \LFinGraph \otimes \LFinGraph \to \LFinGraph$
\begin{align*}
            \tau_{1} \cdot_{\hom^{\prime}} \tau_{2} &:=\sum_{
            \gamma \in \FinGraph} \langle \tau_{1} \otimes \tau_{2} , \Delta_{\hom^{\prime}}(\gamma) \rangle\, \gamma.
\end{align*}

The following remark shows the explicit relationship between $\hom$ and $\hom^\prime$.

\begin{remark}
\label{remark:defacto3}
With the linear map $\phi:\LFinGraph \to \LFinGraph$ satisfying $\phi(\tau) = |\Aut(\tau)|\tau$, it is immediate to see that 
\begin{align*}
    \cdot_{\hom^{\prime}} &= \phi \circ \cdot_{\hom} \circ (\phi^{-1} \otimes \phi^{-1}),
    \end{align*}
    which can be seen to be equivalent to
    \begin{align*}
    \Delta_{\hom^{\prime}} &=  (\phi^{-1} \otimes \phi^{-1}) \circ \Delta_{\hom} \circ  \phi.
    \end{align*}
\end{remark}

\begin{example}
 \begin{align*}
    \cherry \edge \cdot_{\cdot_{\hom^{\prime}}} \cherry \edge \edge &= \frac{|\Aut(\cherry \cherry \edge \edge \edge)|}{|\Aut(\cherry \edge )||\Aut( \cherry \edge \edge)|} \cherry \cherry \edge \edge \edge\\
    &=\frac{|\Aut(\cherry \cherry)||\Aut(\edge \edge \edge)|}{|\Aut(\cherry)||\Aut( \edge )||\Aut( \cherry)||\Aut(\edge \edge)|} \cherry \cherry \edge \edge \edge \\
    &=\binom{2}{1}\frac{|\Aut(\cherry)|^{2}}{|\Aut(\cherry)||\Aut( \cherry)|} \binom{3}{1}\frac{|\Aut(\edge)|^{3}}{|\Aut( \edge )||\Aut(\edge)|^{2}}\cherry \cherry \edge \edge \edge\\
    &= 6 \cherry \cherry \edge \edge \edge.
  \end{align*}
\end{example}

 \subsubsection{Coproducts related to subgraphs}
\label{sssec:coalgebra_sub}
\begin{definition}
    \begin{align*}
     \Delta_\mono(\tau) &:=\sum_{(\sigma_{1},\sigma_{2}) \in \FinGraph \times \FinGraph} \frac{|\{f \in \RegEpi(\sigma_{1}\cdot_{\hom}\sigma_{2},\tau)\ |\ f\evaluatedAt{\sigma_{1}},f\evaluatedAt{\sigma_{2}}\text{are monomorphisms}\}|}{|\Aut(\tau)|}\;\sigma_{1} \otimes \sigma_{2}. 
      \end{align*}
 \end{definition}
where $f\evaluatedAt{\sigma_{1}},f\evaluatedAt{\sigma_{2}}$ are restrictions of the map $f$ to the vertices of $\sigma_{1}$ and $\sigma_{2}$ respectively.

\begin{example}
 \begin{align*}
 \Delta_{\mono}(\vertex) &= \e \otimes \vertex + \vertex \otimes \e + \vertex \otimes \vertex \\
  \Delta_{\mono}(\edge\;) &= \e \otimes \edge + 2\vertex \otimes \edge + 2\vertex \vertex \otimes \edge + \edge \otimes \e + 2\edge \otimes \vertex + 2\edge \otimes \vertex \vertex + 2\edge \otimes \edge\\
  \Delta_{\mono}(\edge \vertex\;) &= \e \otimes \edge \vertex + \vertex \otimes \edge + 3\vertex \otimes \edge \vertex + 4\vertex \vertex \otimes \edge +6 \vertex \vertex \otimes \edge \vertex+ \edge \otimes \vertex + 4\edge \otimes \vertex \vertex + 2\edge \otimes \edge \vertex \\&+ \edge \vertex \otimes \e + 3\edge \vertex \otimes \vertex + 6\edge \vertex \otimes \vertex \vertex + 2\edge \vertex \otimes \edge + 2\edge \vertex \otimes \edge \vertex\\
   \Delta_{\mono}(\cherry\:) &= \e \otimes \cherry + 3\vertex \otimes \cherry + 6\vertex \vertex \otimes \cherry + 4\edge \otimes \edge + 4\edge \otimes \cherry \\&+ 4\edge \vertex \otimes \edge + 12\edge \vertex \otimes \edge \vertex + 4\edge \vertex \otimes \cherry + 4\cherry \otimes \edge \\&+ 6\cherry \otimes \vertex \vertex + 4\cherry \otimes \edge + 4\cherry \otimes \edge \vertex + 2\cherry \otimes \cherry .
 \end{align*}
\end{example}

We can use \cref{lemma:dualnongraded} to define a product 
$ \cdot_{\mono}\ : \LFinGraph \otimes \LFinGraph \to \LFinGraph,$ given by
\begin{align*}
            \tau_{1} \cdot_{\mono} \tau_{2} &:=\sum_{
            \gamma \in \FinGraph} \langle \tau_{1} \otimes \tau_{2} , \Delta_{\mono}(\gamma) \rangle\, \gamma.
\end{align*}

\begin{example}
  \begin{align*}
   \vertex\:\cdot_{\mono}\vertex
    &=
    \vertex \vertex + \vertex
\\
   \edge\:\cdot_{\mono}\vertex
    &=
    \edge \vertex
    + 2\edge\\
    \edge\:\cdot_{\mono}\edge
    &=
    \edge\edge 
    + \cherry
    + \frac{1}{2}\edge\\
    \edge\:\cdot_{\mono} \cherry
    &= \edge \cherry 
    + 3\tria +3\threeStar 
    + 2\threeLadder
    + 2 \cherry 
\end{align*}
 \end{example}

We now define a coproduct that is obtained from $\Delta_{\mono}$, in the same way that $\Delta_{\hom^{\prime}}$ was obtained from $\Delta_{\hom}$, see \cref{remark:defacto3}.

\begin{definition}
    \begin{align*}
     \Delta_{\mono^{\prime}}(\tau) &:=\sum_{(\sigma_{1},\sigma_{2}) \in \FinGraph \times \FinGraph} \frac{|\{f \in \RegEpi(\sigma_{1}\cdot_{\hom}\sigma_{2},\tau)|f\evaluatedAt{\sigma_{1}},f\evaluatedAt{\sigma_{2}}\text{are monomorphisms}\}|}{|\Aut(\sigma_{1})||\Aut(\sigma_{2})|}\;\sigma_{1} \otimes \sigma_{2}. 
      \end{align*}
 \end{definition}
Alternatively, it can be defined directly on the vertex set and the edge set as follows
\begin{align*}
     \Delta_{\mono^{\prime}}(\tau) &:=\sum_{\substack{A\cup B = V(\tau) \\ C\cup D = E(\tau)\\\cup C\subseteq A,\;\cup D \subseteq B}} \tau\evaluatedAt{A,C} \otimes \tau\evaluatedAt{B,D}.
\end{align*}

Again, from~\cref{lemma:dualnongraded}, we can define a product 
$ \cdot_{\mono^{\prime}}\ : \LFinGraph \otimes \LFinGraph \to \LFinGraph,$ given by
\begin{align*}
            \tau_{1} \cdot_{\mono^{\prime}} \tau_{2} &:=\sum_{
            \gamma \in \FinGraph} \langle \tau_{1} \otimes \tau_{2} , \Delta_{\mono^{\prime}}(\gamma) \rangle\, \gamma.
\end{align*}

\begin{example}
\begin{align*}
    \edge \cdot_{\mono^{\prime}} \edge &= \edge + 2 \edge \edge + 2 \cherry
\end{align*}
\end{example}

\begin{remark}~
  \label{rem:quasishuffle}
  \begin{enumerate}

  \item 
  This product also appears in \cite[Def.~5]{maugis2020testing}.

  \item
  \label{rem:quasishuffle:borie}
In \cite{bib:BNH2015}, the author introduces the restriction of $\cdot_{\mono^{\prime}}$ to $\FinGraphNo$, the set of graphs without isolated vertices. This restriction corresponds to a quasi-shuffle product, $\qs: \LFinGraphNo \otimes \LFinGraphNo \to \LFinGraphNo$. We name it quasi-shuffle since we can write
\begin{align*}
            \tau_{1} \qs \tau_{2} &=\sum_{
            \gamma \in \FinGraphNo}|\{A,B \subset E(\gamma)|A \cup B = E(\gamma),\gamma\evaluatedAt{A} \cong \tau_{1}, \gamma\evaluatedAt{B} \cong \tau_{2}\} | \gamma.
\end{align*}

Here, one can define a dual coproduct, as a map $\LFinGraphNo \to \LFinGraphNo \otimes \LFinGraphNo$
\begin{align*}
            \Delta_{\qs}(\tau) &:=\sum_{A \cup B = E(\tau)}\tau\evaluatedAt{A} \otimes \tau\evaluatedAt{B}.
\end{align*}
For example,
\begin{align*}
   \Delta_{\qs}(\edge\;) = \e \otimes \edge + \edge + \edge \otimes \e + \edge \otimes \edge.
\end{align*}

  \end{enumerate}

\end{remark}

\subsubsection{Coproducts related to induced subgraphs}
\label{sssec:coalgebra induced}

\begin{definition}
    \begin{align*}
        \Delta_{\regmono}(\tau) 
        &:=\sum_{(\sigma_{1},\sigma_{2}) \in \FinGraph \times \FinGraph} \frac{|\{f \in \Epi(\sigma_{1}\cdot_{\hom}\sigma_{2},\tau)\mid f\evaluatedAt{\sigma_{1}},f\evaluatedAt{\sigma_{2}}\text{are  regular monomorphisms}\}|}{|\Aut(\tau)|}\;\sigma_{1} \otimes \sigma_{2}. 
    \end{align*}
    The corresponding product is given by
    \begin{align*}
        \tau_{1} \cdot_{\regmono} \tau_{2} &:=\sum_{\gamma \in \FinGraph} \langle \tau_{1} \otimes \tau_{2} , \Delta_{\regmono}(\gamma) \rangle\, \gamma.
    \end{align*}
\end{definition}

As in the previous section, we define a rescaled version of the coproduct, see \cref{remark:defacto3}.

\begin{definition}

\begin{align*}
        \Delta_{\regmono^{\prime}}(\tau) 
        &:= \sum_{(\sigma_{1},\sigma_{2}) \in \FinGraph \times \FinGraph} \frac{|\{f \in \Epi(\sigma_{1}\cdot_{\hom}\sigma_{2},\tau)|f\evaluatedAt{\sigma_{1}},f\evaluatedAt{\sigma_{2}}\text{are  regular monomorphisms}\}|}{|\Aut(\sigma_{1})||\Aut(\sigma_{2})|}\;\sigma_{1} \otimes \sigma_{2},
    \end{align*}
or, directly on the set of vertices as
    \begin{align*}
        \Delta_{\regmono^{\prime}}(\tau) 
        &:= \sum_{I \cup J = V(\tau)} \tau_I \otimes \tau_J.
    \end{align*}
    The corresponding product is given by
    \begin{align*}
        \tau_{1} \cdot_{\regmono^{\prime}} \tau_{2} &:=\sum_{\gamma \in \FinGraph} \langle \tau_{1} \otimes \tau_{2} , \Delta_{\regmono^{\prime}}(\gamma) \rangle\, \gamma.
    \end{align*}
    Note that we can write
    \begin{align*}
\langle \tau_{1} \otimes \tau_{2} , \Delta_{\regmono^{\prime}}(\gamma) \rangle = |\{A,B \subseteq V(\gamma)|A \cup B = V(\tau),\tau_{A} \cong \tau_{1}, \tau_{B} \cong \tau_{2}\}|,
    \end{align*}
    which is the way it appears in \cite{penaguiao2020pattern}.
    
\begin{remark}
  $\Delta_{\regmono^{\prime}}$ appears in \cite{schmitt1995hopf} and was probably rediscovered in \cite{penaguiao2020pattern}. Schmitt constructed this coalgebra to provide an example of a Hopf algebra that is \emph{not} of incidence type. 
\end{remark}
    
    \end{definition}
\begin{example}
    \begin{align*}
        \Delta_{\regmono^{\prime}}(\vertex\:) 
        &= \vertex\otimes\e 
        + \e\otimes\! \vertex 
        + \vertex \otimes\! \vertex.\\
        \Delta_{\regmono^{\prime}}(\edge\:) 
        &= \edge \otimes \e
        + \e\otimes\! \edge 
        + 2\vertex \otimes\! \vertex
        + \edge \otimes\! \edge 
        + 2 \vertex\otimes\!\edge 
        + 2 \edge \otimes\! \vertex.
    \end{align*}
\end{example}
\begin{remark}
\label{remark:schmittinc}
Observe that
\begin{align*}
        \Delta_{\regmono^{\prime}}(\tau) 
        =\sum_{\substack{I \cup J = V(\tau)\\ I \cap J = \emptyset}} \tau_I \otimes \tau_J + \sum_{\substack{I \cup J = V(\tau)\\ I \cap J \neq \emptyset}} \tau_I \otimes \tau_J.
\end{align*}
In \cite[Section 12]{bib:schmitt1994incidence},  Schmitt 
introduces
a variant of this coproduct,
which ``ignores'' overlapping sets of vertices, i.e.
\begin{align*}
         \Delta_{\mathsf{Schmitt}}(\tau) 
        &:=\sum_{\substack{I \cup J = V(\tau)\\ I \cap J = \emptyset}} \tau_I \otimes \tau_J .
\end{align*}
 As an example,
    \begin{align*}
        \Delta_{\regmono^{\prime}}(\vertex\:) -\Delta_{\mathsf{Schmitt}}(\vertex\:)
        &= \vertex \otimes\! \vertex.\\
        \Delta_{\regmono^{\prime}}(\edge\:) -\Delta_{\mathsf{Schmitt}}(\edge\:) 
        &= \edge \otimes\! \edge 
        + 2 \vertex\otimes\!\edge 
        + 2 \edge \otimes\! \vertex.
    \end{align*}

\end{remark}

\begin{example}
\begin{align*}
    \vertex\:\cdot_{\regmono^{\prime}} \vertex 
    &= 2\vertex\vertex 
    + 2 \edge
    + \vertex\\
    \edge\:\cdot_{\regmono^{\prime}} \edge 
    &= 2 \edge \edge 
    + 2\threeLadder 
    + 2 \tailedTriangle 
    + 4 \cyclefour 
    + 4\diam
    + 6\Kfour
    + \edge 
    + 2\cherry 
    + 6\tria
\end{align*}
\end{example}

In Appendix \ref{ssec:Associativity.Proof} it is shown that the (co)products defined above for graphs are (co)commutative and (co)associative. Moreover, all of them share the same (co)unit. 

\begin{definition}
    The unit map is given by $u: \Q \to \LFinGraph$ with $u(1_{\Q})=\e$. 
    The counit map is defined by
    \begin{align*}
        \varepsilon(\tau) = 
    \begin{cases}
1, \;\text{if}\; \tau =  \e\\
0, \;\text{else}.
\end{cases}
    \end{align*}
    \end{definition}
  
The proof of the following result is given in \cref{ssec:Associativity.Proof}.

\begin{theorem}
  \label{thm:coassociative}
  The coproducts in \cref{sssec:symgraph}, \cref{sssec:coalgebra_sub}, and  \cref{sssec:coalgebra induced} are counital, coassociative and cocommutative. Dually, the corresponding products are unital, associative and commutative.
\end{theorem}

   The following lemma shows that the (co)products respect the grading defined in \cref{ssec:free_vector}. The proof is immediate.

\begin{lemma}
    Given the grading by the number of vertices in \cref{eq:vertexCountGrading}, the products and coproducts behave as follows.
~
\begin{enumerate}
\item $\cdot_{\hom}, \Delta_{\hom},\cdot_{\hom^{\prime}}, \Delta_{\hom^{\prime}}$ are graded;
\item $\cdot_{\mono},\cdot_{\mono^{\prime}},\cdot_{\regmono},\cdot_{\regmono^{\prime}}$ are filtered;
\end{enumerate}

\begin{remark}
On the contrary, $\Delta_{\mono},\Delta_{\mono^{\prime}},\Delta_{\regmono},\Delta_{\regmono^{\prime}}$ are \emph{not} filtered, according to the filtration induced by \cref{eq:vertexCountGrading}.
\end{remark}
\end{lemma}

\subsection{Bialgebras and Hopf algebras}
\label{ssec:OurBialgebrasOnGraphs}

We now determine which operations are compatible in the sense of bialgebra. 
\begin{lemma}[Bialgebra property]
\label{lemma:bialg_prop}
Considering all possible product-coproduct combinations:
\begin{table}[H]
\centering
\begin{tabular}{l|l|l|l|l|l|l|}
\cline{2-7}
     Bialgebra                  & $\cdot_{\hom} $  & $\cdot_{\hom^{\prime}}$ &  $\cdot_{\mono}$& $\cdot_{\mono^{\prime}}$ & $\cdot_{\regmono}$  & $\cdot_{\regmono^{\prime}}$  \\ \hline
\multicolumn{1}{|l|}{$\Delta_{\hom} $}  & \textcolor{brown}{no} & \textcolor{blue}{yes} & \textcolor{brown}{no} & \textcolor{blue}{yes} & \textcolor{brown}{no} &  \textcolor{blue}{yes}\\ \hline
\multicolumn{1}{|l|}{$\Delta_{\hom^{\prime}} $} & \textcolor{blue}{yes} & \textcolor{brown}{no} & \textcolor{blue}{yes} & \textcolor{brown}{no} & \textcolor{blue}{yes} &  \textcolor{brown}{no} \\ \hline
\multicolumn{1}{|l|}{$\Delta_{\mono} $} & \textcolor{brown}{no} & \textcolor{blue}{yes} & \textcolor{brown}{no} & \textcolor{brown}{no} & \textcolor{brown}{no} &  \textcolor{brown}{no}  \\ \hline
\multicolumn{1}{|l|}{$\Delta_{\mono^{\prime}} $} & \textcolor{blue}{yes} & \textcolor{brown}{no} & \textcolor{brown}{no} & \textcolor{brown}{no} & \textcolor{brown}{no} &  \textcolor{brown}{no} \\ \hline
\multicolumn{1}{|l|}{$\Delta_{\regmono} $} & \textcolor{brown}{no} & \textcolor{blue}{yes} & \textcolor{brown}{no} & \textcolor{brown}{no} & \textcolor{brown}{no} &  \textcolor{brown}{no} \\ \hline
\multicolumn{1}{|l|}{$\Delta_{\regmono^{\prime}} $} & \textcolor{blue}{yes} & \textcolor{brown}{no} & \textcolor{brown}{no} & \textcolor{brown}{no} & \textcolor{brown}{no} &  \textcolor{brown}{no}  \\ \hline
\end{tabular}
\end{table}
\end{lemma}

\begin{proof}
 Since $(\LFinGraph, \cdot_{\hom}, \Delta_{\hom^{\prime}})$ is a bialgebra by construction, it follows that its graded dual  $(\LFinGraph^*, \cdot_{\hom^{\prime}}, \Delta_{\hom})$ is also a bialgebra. 
 
 We now prove that $(\LFinGraph, \cdot_{\hom}, \Delta_{\mono^{\prime}})$ is a bialgebra. Let $\sigma,\tau \in \FinGraph$. Then
    \begin{align*}
     \Delta_{\mono^{\prime}}(\sigma \cdot_{\hom} \tau) &:=\sum_{\substack{A\cup B = V(\sigma \cdot_{\hom} \tau) \\ C\cup D = E(\sigma \cdot_{\hom} \tau)\\\cup C\subseteq A,\;\cup D \subseteq B}} (\sigma \cdot_{\hom} \tau)\evaluatedAt{A,C} \otimes (\sigma \cdot_{\hom} \tau)\evaluatedAt{B,D}\\
     &= \sum_{\substack{A\cup B = V(\sigma) \disj V(\tau) \\ C\cup D = E(\sigma) \disj E(\tau)\\\cup C\subseteq A,\;\cup D \subseteq B}} \sigma\evaluatedAt{A\cap V(\sigma),C\cap E(\sigma)} \cdot_{\hom} \tau\evaluatedAt{A \cap V(\tau),C \cap E(\tau)} \otimes \sigma\evaluatedAt{B\cap V(\sigma),D\cap E(\sigma)} \cdot_{\hom} \tau\evaluatedAt{B \cap V(\tau),D \cap 
 E(\tau)}\\
     &= \sum_{\substack{A\cup B = V(\sigma) \disj V(\tau) \\ C\cup D = E(\sigma) \disj E(\tau)\\\cup C\subseteq A,\;\cup D \subseteq B}} \sigma\evaluatedAt{A\cap V(\sigma),C\cap E(\sigma)} \otimes  \sigma\evaluatedAt{B\cap V(\sigma),D\cap E(\sigma)} \cdot_{\hom} \tau\evaluatedAt{A \cap V(\tau),C \cap E(\tau)} \otimes  \tau\evaluatedAt{B \cap V(\tau),D \cap 
 E(\tau)}\\
     &= \sum_{\substack{A\cup B = V(\sigma)  \\ C\cup D = E(\sigma) \\ \cup C\subseteq A,\;\cup D \subseteq B}} \sigma\evaluatedAt{A,C} \otimes \sigma\evaluatedAt{B ,D} \cdot_{\hom} \sum_{\substack{A\cup B = V(\tau)\\ C\cup D = E(\tau)\\ \cup C\subseteq A,\;\cup D \subseteq B}} \tau\evaluatedAt{A, C} \otimes \tau\evaluatedAt{B, D}\\
     &= \Delta_{\mono^{\prime}}(\sigma)\cdot_{\hom} \Delta_{\mono^{\prime}}(\tau),
      \end{align*}
where the second to last equality holds because the following two sets are equal
\begin{align*}
S_{1} &:= \{(X_{1},Y_{1},X_{2},Y_{2},W_{1},Z_{1},W_{2},Z_{2})| X_{1} \cup X_{2} = V(\sigma), Y_{1} \cup Y_{2} = E(\sigma), W_{1} \cup W_{2} = V(\tau), Z_{1} \cup Z_{2} = E(\tau)\\& \cup Y_{1} \subseteq X_{1},\cup Y_{2} \subseteq X_{2},\cup W_{1} \subseteq Z_{1},\cup W_{2} \subseteq Z_{2}    \},\\
S_{2} &:= \{(A \cap V(\sigma),C \cap E(\sigma),B \cap V(\sigma),D\cap E(\sigma),A \cap V(\tau),C \cap E(\tau),B \cap V(\tau),D\cap E(\tau))|\\& A \cup B = V(\sigma) \disj V(\tau), C \cup D = E(\sigma) \disj E(\tau), \cup C \subseteq A, \cup D \subseteq B  \}.
\end{align*}
Indeed, let $a_{1} \in S_{1}$. Now let $A = X_{1} \disj W_{1}$, $B = X_{2} \disj W_{2}$, $C = Y_{1} \disj Y_{2}$ and $D = Z_{1} \disj Z_{1}$. Then $a_{1} \in S_{2}$. Notice that if $\cup C \subseteq A$, then $\cup (C \cap E(\sigma))\subseteq A \cap V(\sigma)$. If $a_{2} \in S_{2}$, then it is immediate to see that it is also in $S_{1}$.

The proof of $(\LFinGraph, \cdot_{\hom}, \Delta_{\regmono^{\prime}})$ being a bialgebra is very similar. It follows dually that $(\LFinGraph, \cdot_{\mono^{\prime}}, \Delta_{\hom})$ and $(\LFinGraph, \cdot_{\regmono^{\prime}}, \Delta_{\hom})$ are bialgebras as well.

For $(\LFinGraph, \cdot_{\mono}, \Delta_{\hom^{\prime}})$ and $(\LFinGraph, \cdot_{\regmono^{\prime}}, \Delta_{\hom^{\prime}})$, we can use that  $\phi:(\LFinGraph,\cdot_{\hom},\Delta_{\hom^{\prime}}) \to (\LFinGraph,\cdot_{\hom^{\prime}},\Delta_{\hom})$, where $\phi(\tau):=|\Aut(\tau)|\tau$, is a bialgebra isomorphism. We then use \cref{remark:defacto3} to see that:
\begin{align*}
\Delta_{\hom^{\prime}}(\sigma \cdot_{\mono} \tau) &=  \Delta_{\hom^{\prime}}(\phi^{-1}(\phi(\sigma)\cdot_{\mono^{\prime}} \phi(\tau)))\\
&=  \phi^{-1} \otimes \phi^{-1} \circ \Delta_{\hom} \circ \cdot_{\mono^{\prime}} \circ (\phi \otimes \phi) (\sigma \otimes \tau)
\\&=  \phi^{-1} \otimes \phi^{-1} \circ \cdot_{\mono^{\prime}} \circ (\Delta_{\hom} \otimes \Delta_{\hom})  \circ (\phi \otimes \phi) (\sigma \otimes \tau)
\\&=  \phi^{-1} \otimes \phi^{-1} \circ \cdot_{\mono^{\prime}} \circ  (\phi \otimes \phi) \otimes (\phi \otimes \phi) \circ (\Delta_{\hom^{\prime}} \otimes \Delta_{\hom^{\prime}}) (\sigma \otimes \tau)
\\&= \Delta_{\hom^{\prime}}(\sigma) \cdot_{\mono} \Delta_{\hom^{\prime}}(\tau)
\end{align*}
  For  $(\LFinGraph, \cdot_{\regmono^{\prime}}, \Delta_{\hom^{\prime}})$ the proof is analogous. It then follows dually that $(\LFinGraph, \cdot_{\hom^{\prime}}, \Delta_{\mono})$  and $(\LFinGraph, \cdot_{\hom^{\prime}}, \Delta_{\regmono})$ are also bialgebras.

The rest of the proof consists in counterexamples, that can be easily checked.
\end{proof}
We now determine which of the bialgebras are in fact Hopf algebras. Recall that all of them are commutative and cocommutative. The underlying set is always $\LFinGraph$.
\begin{proposition}
If we consider the bialgebras of \cref{lemma:bialg_prop},
    \label{thm.HopfalgebrasList}
\begin{itemize}
\item $(\cdot_{\hom},\Delta_{\hom^{\prime}})$ and $(\cdot_{\hom^{\prime}},\Delta_{\hom})$ are Hopf algebras, connected and graded by \eqref{eq:vertexCountGrading},
\item $(\cdot_{\mono}, \Delta_{\hom^{\prime}}), (\cdot_{\mono^{\prime}}, \Delta_{\hom}), (\cdot_{\regmono}, \Delta_{\hom^{\prime}}),(\cdot_{\regmono^{\prime}}, \Delta_{\hom})$ are connected, filtered (with filtration induced by \eqref{eq:vertexCountGrading}) Hopf algebras,
\item $(\cdot_{\hom^{\prime}}, \Delta_{\mono})$, $(\cdot_{\hom}, \Delta_{\mono^{\prime}})$, $(\cdot_{\hom^{\prime}}, \Delta_{\regmono})$ and $(\cdot_{\hom}, \Delta_{\regmono^{\prime}})$ fail to be Hopf algebras.
\end{itemize}
\end{proposition}

\begin{remark}
  Some of these Hopf algebras appear in the literature already.
  $(\cdot_{\regmono^{\prime}},\Delta_{\hom})$ appears in \cite{penaguiao2020pattern}. So does the Hopf algebra $(\LFinGraphNo, \qs,\Delta_{\hom})$  in \cite{bib:BNH2015} which is a sub-Hopf algebra of $(\LFinGraph, \cdot_{\mono^{\prime}},\Delta_{\hom})$, see \Cref{rem:quasishuffle}, \cref{rem:quasishuffle:borie}.
\end{remark}

\begin{proof}~
     For connected, filtered bialgebras we can use \cref{thm.inv_conv_alg}. 
     We now show for $(\LFinGraph, \cdot_{\hom}, \Delta_{\mono^{\prime}})$ that the antipode does not exist. Analogous arguments work for the other cases. Suppose the antipode exists. A direct calculation of \eqref{eq:antipode_inverse} in the Appendix on the single vertex graph, $\vertex\:$, yields:
    \begin{align*}
      S(\vertex\:) = \sum_{n \ge 0} (-1)^n \vertex^{\:\cdot_{\hom} n} = - \vertex + \vertex\vertex - \vertex\vertex\vertex  + \cdots,
    \end{align*}
    which is not an element of $\LFinGraph$.
    \footnote{One could work with the dual coalgebra of \cite[Section 2.5]{bib:radford2011hopf} 
    to dualize $(\LFinGraph, \cdot_{\mono^{\prime}}, \Delta_{\hom})$ to a Hopf algebra. But we shall not need this here.}
\end{proof}

\begin{table}[H]
\centering
\begin{tabular}{l|l|l|l|l|l|l|}
\cline{2-7}
     Hopf algebra                 & $\cdot_{\hom} $  & $\cdot_{\hom^{\prime}}$ &  $\cdot_{\mono}$& $\cdot_{\mono^{\prime}}$ & $\cdot_{\regmono}$  & $\cdot_{\regmono^{\prime}}$  \\ \hline
\multicolumn{1}{|l|}{$\Delta_{\hom} $}  & \textcolor{brown}{no} & \textcolor{blue}{yes} & \textcolor{brown}{no} & \textcolor{blue}{yes} & \textcolor{brown}{no} &  \textcolor{blue}{yes}\\ \hline
\multicolumn{1}{|l|}{$\Delta_{\hom^{\prime}} $} & \textcolor{blue}{yes} & \textcolor{brown}{no} & \textcolor{blue}{yes} & \textcolor{brown}{no} & \textcolor{blue}{yes} &  \textcolor{brown}{no} \\ \hline
\multicolumn{1}{|l|}{$\Delta_{\mono} $} & \textcolor{brown}{no} & \textcolor{brown}{no} & \textcolor{brown}{no} & \textcolor{brown}{no} & \textcolor{brown}{no} &  \textcolor{brown}{no}  \\ \hline
\multicolumn{1}{|l|}{$\Delta_{\mono^{\prime}} $} & \textcolor{brown}{no} & \textcolor{brown}{no} & \textcolor{brown}{no} & \textcolor{brown}{no} & \textcolor{brown}{no} &  \textcolor{brown}{no} \\ \hline
\multicolumn{1}{|l|}{$\Delta_{\regmono} $} & \textcolor{brown}{no} & \textcolor{brown}{no} & \textcolor{brown}{no} & \textcolor{brown}{no} & \textcolor{brown}{no} &  \textcolor{brown}{no} \\ \hline
\multicolumn{1}{|l|}{$\Delta_{\regmono^{\prime}} $} & \textcolor{brown}{no} & \textcolor{brown}{no} & \textcolor{brown}{no} & \textcolor{brown}{no} & \textcolor{brown}{no} &  \textcolor{brown}{no}  \\ \hline
\end{tabular}
\end{table}

\section{Isomorphisms to the polynomial Hopf algebra on connected graphs}
\label{sec:Isom_poly_HA}

In this section, we investigate some of the properties of the Hopf algebras introduced in~\cref{sec:HA_graphs}. We will show that all of them are isomorphic to the polynomial Hopf algebra on connected graphs. For some of the Hopf algebras that we consider, this result is already known. The product $\cdot_{\regmono^{\prime}}$ appeared as a working example in reference \cite{penaguiao2020pattern}, where it was interpreted in the context of the theory of species. There it is shown that $(\cdot_{\regmono^{\prime}},\Delta_{\hom})$ is isomorphic to the polynomial Hopf algebra. Schmitt used in \cite[Thm.~10.2]{bib:schmitt1994incidence} the theory of incidence Hopf algebras to prove this result for $(\cdot_{\hom},\Delta_{\mathsf{Schmitt}})$, which is similar to $(\cdot_{\hom},\Delta_{\regmono^{\prime}})$, see \cref{remark:schmittinc}. This section relies on a classical result from the literature to Hopf algebras, based on the universal enveloping algebra on primitive elements, i. e. \cite[Thms.~4.3.1 and 4.3.3]{cartier2021classical}.

\subsection{Bicommutative unipotent Hopf algebras}
\label{ssec:bicommutative_unipotent}

In our setting, we can restrict our analysis to the family of bicommutative unipotent Hopf algebras.

\begin{definition}
   A bialgebra $\curlyH$ is said to be \textbf{unipotent} if for all $a\in \curlyH$, there exists $N(a)\geq 0$ such that for all $n \ge N(a)$:
   \begin{align*}
     (\id - u\circ \varepsilon)^{* n }(a) &= 0.
   \end{align*}
   
   If a bialgebra is 
   commutative and cocommutative we say that it is \textbf{bicommutative}.
\end{definition}

Note that any filtered connected bialgebra is unipotent and henceforth a Hopf algebra (see e.g. \cref{thm.inv_conv_alg}). 
We present a technical definition for constructing algebra homomorphisms.
\begin{definition}
\label{def:natural_extension}
    Suppose that $(A_1,m_1)$ and $(A_2,m_2)$ are unital commutative associative algebras and $(A_1,m_1)$ is freely generated by $\lrb{x_i}\subset A_1$.
    For any linear function $\varphi:\Q\lra{\lrb{x_i}}\to A_2$, we define its \textbf{natural extension} $\reallywidehat{\varphi}:(A_1,m_1)\to(A_2,m_2)$ by the linear extension of
    \begin{align*}
      \reallywidehat{\varphi} \circ m_1^{n-1}(x_{i_1} \otimes \cdots \otimes x_{i_n} ) :&= 
        m_2^{n-1}( \varphi(x_{i_1}) \otimes \cdots \otimes \varphi(x_{i_n} )).
    \end{align*}
\end{definition}
 Since $(A_1,m_1)$ is free, $\reallywidehat{\varphi}$ is well defined and an algebra homomorphism.

These natural extensions appear in a theorem by Samuelson and Leray as found in \cite[Thm.~4.3.3]{cartier2021classical}. This is a particular case of Cartier's theorem \cite[Thm.~4.3.1]{cartier2021classical} for bicommutative unipotent Hopf algebras.

\begin{theorem}[Samuelson--Leray--Cartier]
\label{thm:cartier}
 Suppose that $\curlyH$ is a bicommutative unipotent Hopf algebra over $\Q$.
Let $\mathtt{Prim}(\curlyH)$ be the set of primitive elements in $\curlyH$. The inclusion $\iota$ of $\mathtt{Prim}~(\curlyH)$ in $\curlyH$ naturally extends
to an isomorphism of Hopf algebras
\begin{align*}
   \hat\iota: (\langle\mathtt{Prim}(\curlyH)\rangle_{\Q} , \cdot , \Delta )  \to \curlyH,
\end{align*}
where $(\langle\mathtt{Prim}(\curlyH)\rangle_{\Q} , \cdot , \Delta )$ is the free commutative Hopf algebra on the primitive elements.
\end{theorem}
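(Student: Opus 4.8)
The plan is to construct $\Phi$ as the canonical algebra map out of a free object, upgrade it to a morphism of Hopf algebras essentially for free, and then concentrate all the real work on bijectivity by means of the Eulerian (convolution-logarithm) idempotent, whose existence rests on unipotency together with the characteristic-zero hypothesis. Concretely, since $\curlyH$ is commutative, $\R\lrs{\mathtt{Prim}(\curlyH)}$ is the free commutative algebra on the vector space $\mathtt{Prim}(\curlyH)$, so the inclusion $\iota\colon \mathtt{Prim}(\curlyH)\hookrightarrow\curlyH$ extends, via \cref{def:natural_extension}, to a unique unital algebra homomorphism $\Phi=\reallywidehat{\iota}$. To see that $\Phi$ is also a coalgebra morphism I would verify $\Delta\circ\Phi=(\Phi\otimes\Phi)\circ\Delta$ only on the polynomial generators: for $p\in\mathtt{Prim}(\curlyH)$ both sides equal $p\otimes 1+1\otimes p$ because $p$ is primitive in $\curlyH$. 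As $\Delta$ is an algebra map on both factors and $\Phi$ is an algebra map, the locus where the two composites agree is a subalgebra; containing the generators, it is all of $\R\lrs{\mathtt{Prim}(\curlyH)}$. Counit compatibility is checked the same way and compatibility with antipodes is then automatic, so $\Phi$ is a morphism of Hopf algebras.

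\emph{The Eulerian idempotent.} Because $\curlyH$ is unipotent, $\id-u\circ\varepsilon$ is locally nilpotent for the convolution product, so (cf.\ \cref{thm.inv_conv_alg}) the convolution logarithm
\[
  e:=\log^{*}(\id)=\sum_{n\ge 1}\frac{(-1)^{n-1}}{n}\,(\id-u\circ\varepsilon)^{*n}
\]
is a well-defined endomorphism of $\curlyH$, the series terminating on each element. A short computation shows $e$ fixes primitives; cocommutativity then forces $\im(e)=\mathtt{Prim}(\curlyH)$, so $e$ is a projection onto the primitives. Using commutativity and the factorials available in characteristic zero, the divided convolution powers $e_{n}:=\tfrac{1}{n!}\,e^{*n}$ (with $e_{0}:=u\circ\varepsilon$) form a complete family of orthogonal idempotents, yielding a grading $\curlyH=\bigoplus_{n\ge 0}\im(e_{n})$ with $\im(e_{1})=\mathtt{Prim}(\curlyH)$.

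\emph{Bijectivity.} For surjectivity I would use that commutativity makes the iterated product $\mathtt{Prim}(\curlyH)^{\otimes n}\to\curlyH$ factor through $\mathrm{Sym}^{n}$ and land in $\im(e_{n})$, and conversely that $\im(e_{n})$ is spanned by $n$-fold products of primitives; hence every homogeneous piece, and thus all of $\curlyH$, lies in the image of $\Phi$. (Alternatively, surjectivity follows by a clean induction on the unipotency filtration: for $x$ in the augmentation ideal, $\bar\Delta(x)=\Delta(x)-x\otimes 1-1\otimes x$ has strictly lower filtration degree, hence lies in $\im(\Phi\otimes\Phi)$ by induction, and subtracting a suitable product of lower image elements leaves a primitive.) For injectivity I would match the two gradings: in characteristic zero the symmetrization identifies $\im(e_{n})$ with $\mathrm{Sym}^{n}(\mathtt{Prim}(\curlyH))$, and $\Phi$ carries the degree-$n$ part of $\R\lrs{\mathtt{Prim}(\curlyH)}$ isomorphically onto $\im(e_{n})$; equivalently, the idempotents $e_{n}$ assemble into an explicit two-sided inverse of $\Phi$.

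\emph{Main obstacle.} The construction and the morphism property are formal; the real content is injectivity, i.e.\ the algebraic independence of the primitives inside $\curlyH$. This is precisely where characteristic zero is indispensable: it makes the Eulerian idempotents well-defined and turns the symmetrization maps $\mathrm{Sym}^{n}(\mathtt{Prim}(\curlyH))\to\im(e_{n})$ into isomorphisms rather than mere surjections. Since the statement is false over fields of positive characteristic (divided-power phenomena obstruct freeness), any correct proof must use the rational coefficients somewhere, and the convolution logarithm is the natural place to do so.
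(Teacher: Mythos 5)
The paper does not prove \cref{thm:cartier} at all: it imports the statement from \cite[Thms.~4.3.1 and 4.3.3]{cartier2021classical}, so there is no internal argument to compare yours against. Your proposal essentially reconstructs the standard proof from that cited source, via the convolution logarithm and the Eulerian idempotents, and its skeleton is correct. The formal parts are handled well: the extension of the inclusion to an algebra map, the observation that the equalizer of the two algebra maps $\Delta\circ\Phi$ and $(\Phi\otimes\Phi)\circ\Delta$ into $\curlyH\otimes\curlyH$ is a subalgebra containing the generators (a slicker variant of the induction the paper itself runs in \cref{thm:freePrimitive}), and the automatic compatibility with antipodes (the paper's \cref{thm:homBiHopf}).

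Three invoked facts carry the actual mathematical content and should be written out; all are true and provable by the means you indicate. First, $\im(e)\subseteq\mathtt{Prim}(\curlyH)$: since $\Delta$ is an algebra map, $\Delta\circ e=\log^{*}(\Delta)$ in the convolution algebra $\Hom(\curlyH,\curlyH\otimes\curlyH)$, and $\Delta$ factors as the convolution product of $(\id\otimes u\circ\varepsilon)\circ\Delta$ and $(u\circ\varepsilon\otimes\id)\circ\Delta$; the logarithm of this product splits as a sum, giving $\Delta\circ e=e\otimes u\circ\varepsilon\cdot(\dots)+\dots$, i.e.\ primitivity of $e(x)$, only because the two factors commute under convolution --- which is exactly cocommutativity. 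Second, orthogonality of the $e_n$ is a genuine theorem (due to Patras), but your proof never needs it: surjectivity uses only the formal completeness $\sum_{n\ge0}e_n=\exp^{*}(\log^{*}(\id))=\id$ (pointwise finite by unipotency) together with $\im(e_n)\subseteq m^{n-1}\lrp{\mathtt{Prim}(\curlyH)^{\otimes n}}$, which is immediate from $e_n=\frac{1}{n!}\,m^{n-1}\circ e^{\otimes n}\circ\Delta^{n-1}$. Third, injectivity should be anchored in the explicit computation, for primitives $p_1,\dots,p_n$,
\begin{align*}
  e^{\otimes n}\circ\Delta^{n-1}(p_1\cdots p_n)=\sum_{\pi\in S_n}p_{\pi(1)}\otimes\cdots\otimes p_{\pi(n)},
\end{align*}
so that $\frac{1}{n!}\,e^{\otimes n}\circ\Delta^{n-1}$, read through symmetrization, inverts $\Phi$ degree by degree. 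Note that this step uses the dual property of $e$, namely that $e$ vanishes on products of two or more augmentation-ideal elements, and \emph{that} is where commutativity (not cocommutativity) enters, by the mirror-image convolution argument applied to $e\circ m=\log^{*}(m)$. Your closing remark is also on target: characteristic zero is spent precisely on the factorials $\frac{1}{n}$, $\frac{1}{n!}$ in $e$ and $e_n$, and the statement genuinely fails in positive characteristic. With these three computations filled in, the proposal is a complete and correct proof, in line with the reference the paper cites.
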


This theorem is paramount for the current text. We extend this result into a practical tool for verifying that a map is a Hopf algebra isomorphism.

\begin{lemma}
    \label{thm:freePrimitive}
    Suppose that $(A_1,m_1,\Delta_1)$ is a bicommutative unipotent Hopf algebra, and $(A_2,m_2,\Delta_2)$ is a bialgebra.
    If $\varphi:(A_1,m_1)\to(A_2,m_2)$ is an algebra homomorphism and \begin{align*}
        \varphi\lrp{ \mathtt{Prim}(A_1,\Delta_1)} \subset \mathtt{Prim}(A_2,\Delta_2),
    \end{align*} then $\varphi$ is a coalgebra homomorphism, i.e. $\Delta_2 \circ \varphi = (\varphi\otimes\varphi)\circ \Delta_1$.
\end{lemma}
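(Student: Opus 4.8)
The plan is to use the Samuelson--Leray--Cartier theorem (\cref{thm:cartier}) to reduce the identity $\Delta_2 \circ \varphi = (\varphi \otimes \varphi) \circ \Delta_1$ to a check on the primitive elements of $A_1$. The first observation is that both sides are algebra homomorphisms $A_1 \to A_2 \otimes A_2$. Indeed, since $A_2$ is a bialgebra, its coproduct $\Delta_2 \colon A_2 \to A_2 \otimes A_2$ is an algebra homomorphism for the usual tensor-product algebra structure on $A_2 \otimes A_2$; composing with the algebra homomorphism $\varphi$ shows that $\Delta_2 \circ \varphi$ is an algebra homomorphism. Likewise $\Delta_1$ is an algebra homomorphism (as $A_1$ is a bialgebra) and so is $\varphi \otimes \varphi$, whence $(\varphi \otimes \varphi) \circ \Delta_1$ is an algebra homomorphism as well. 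It therefore suffices to show that the two maps agree on a set of algebra generators of $A_1$.

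Here is where \cref{thm:cartier} enters. Because $A_1$ is commutative, cocommutative and unipotent, it is isomorphic as a Hopf algebra to the polynomial algebra $\R\lrs{\mathtt{Prim}(A_1,\Delta_1)}$, the isomorphism being the natural extension (\cref{def:natural_extension}) of the inclusion of primitives. In particular, $A_1$ is generated as a unital algebra by $\mathtt{Prim}(A_1,\Delta_1)$. Since the set of elements on which two unital algebra homomorphisms agree is a unital subalgebra, and since this subalgebra contains the generating set $\mathtt{Prim}(A_1,\Delta_1)$, it must be all of $A_1$. Thus it is enough to verify $\Delta_2 \circ \varphi = (\varphi \otimes \varphi) \circ \Delta_1$ on primitive elements.

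This last check is immediate. Let $x \in \mathtt{Prim}(A_1,\Delta_1)$, so $\Delta_1(x) = x \otimes 1 + 1 \otimes x$. Applying $\varphi \otimes \varphi$ and using that $\varphi$ is unital, i.e. $\varphi(1) = 1$, gives
\begin{align*}
(\varphi \otimes \varphi) \circ \Delta_1(x) = \varphi(x) \otimes 1 + 1 \otimes \varphi(x).
\end{align*}
On the other hand, by hypothesis $\varphi(x) \in \mathtt{Prim}(A_2,\Delta_2)$, so
\begin{align*}
\Delta_2 \circ \varphi(x) = \varphi(x) \otimes 1 + 1 \otimes \varphi(x).
\end{align*}
The two expressions coincide, completing the reduction.

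I expect the only real content to lie in the invocation of \cref{thm:cartier}: it is precisely the bicommutative and unipotent hypotheses that force $A_1$ to be generated by its primitives, and this is exactly what makes ``agreeing on primitives'' suffice for ``agreeing everywhere''. The remaining steps — that both composites are algebra homomorphisms, that matching algebra homomorphisms on generators forces global equality, and the one-line computation on primitives — are routine; the only subtlety worth flagging is that $\varphi$ must preserve the unit for the primitive computation to close, which is part of $\varphi$ being an algebra homomorphism in the sense of \cref{def:natural_extension}.
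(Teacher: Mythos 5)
Your proof is correct and takes essentially the same route as the paper: both reduce via \cref{thm:cartier} to checking $\Delta_2 \circ \varphi = (\varphi\otimes\varphi)\circ\Delta_1$ on primitive elements, where the hypothesis $\varphi(\mathtt{Prim}(A_1,\Delta_1))\subset \mathtt{Prim}(A_2,\Delta_2)$ makes the verification immediate. The paper carries out the extension from primitives to all of $A_1$ by an explicit induction on monomial length (using the bialgebra compatibilities of $m_1,\Delta_1$ and $m_2,\Delta_2$ in the inductive step), which is exactly the hands-on version of your observation that both composites are unital algebra homomorphisms into $A_2\otimes A_2$ and hence agree once they agree on the generating set of primitives.
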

\begin{proof}
    From \cref{thm:cartier}, we know that $(A_1,m_1)$ is free with generators $\lrb{x_i}\subset\mathtt{Prim}(A_1,\Delta_1)$. FIrst note that when $\varphi(1_{A_1})=1_{A_2}$, then $(\varphi \otimes \varphi)\circ \Delta_{1}(1_{A_1}) = (\varphi \otimes \varphi) (1_{A_1} \otimes 1_{A_1}) = \varphi(1_{A_1}) \otimes \varphi(1_{A_1}) = 1_{A_{2}} \otimes 1_{A_{2}} = \Delta_{2}(1_{A_{2}}) =  \Delta_{2}(\varphi(1_{A_{1}}))$. Let us then prove the comultiplicative property of $\varphi$ using induction on the size $n$ of monomials $x_{i_1} \cdots x_{i_n}$. We now use induction with base case $n=1$. From the hypothesis, we know that any primitive element $a\in\mathtt{Prim}(A_1,\Delta_1)$ has a primitive image. Hence
    \begin{align*}
      \Delta_2 \circ \varphi(a) &= 
        \varphi(a) \otimes 1_{A_2} +
        1_{A_2} \otimes \varphi(a) \\
                                &= 
        (\varphi\otimes \varphi) (a \otimes 1_{A_1} +
        1_{A_1} \otimes a) \\
                                &= 
        (\varphi \otimes \varphi) \circ \Delta_1 (a).
    \end{align*}
    
    Suppose now that for $b=x_{i_1}\cdots x_{i_n}\in A_1$, it holds that $$ \Delta_2\circ\varphi(b) = (\varphi\otimes\varphi)\circ\Delta_1(b). $$
    
    For $a\in \mathtt{Prim}(A_1)$,  we use the multiplicative property of  $\varphi$, and that $(A_2,m_2,\Delta_2)$ is a bialgebra.
    \begin{align*}
      \Delta_2 \circ \varphi \circ m_1 (a\otimes b) &= 
        \Delta_2 \circ m_2 \circ (\varphi \otimes \varphi)(a\otimes b) \\
                                                    &=
        m_{A_2\otimes A_2} \circ (\Delta_2 \otimes \Delta_2) \circ (\varphi\otimes\varphi) (a\otimes b).
    \end{align*}
    We use now that $\Delta_2 \circ \varphi(a) = (\varphi\otimes\varphi)\circ\Delta_1(a)$ and also the induction hypothesis. Then, the last expression equals
    \begin{align*}
        m_{A_2\otimes A_2} \circ \lrp{ (\varphi\otimes\varphi) \otimes (\varphi\otimes\varphi) } \circ (\Delta_1(a) \otimes \Delta_1(b)).
    \end{align*}
    Recall now that $\varphi\otimes\varphi: (A_1\otimes A_1,m_{A_1 \otimes A_1}) \to (A_2\otimes A_2,m_{A_2 \otimes A_2})$ is an algebra homomorphism. The former expression reduces to the following by the bialgebra compatibility of $\Delta_1$ and $m_1$.
    \begin{align*}  
        (\varphi\otimes\varphi) \circ m_{A_1\otimes A_1} \circ  (\Delta_1(a) \otimes \Delta_1(b)) 
        = (\varphi\otimes\varphi)\circ \Delta_1 \circ m_1 (a\otimes b).
    \end{align*}
\end{proof}

An important consequence of these results is that we only need to define a function on the primitive elements to obtain Hopf algebra homomorphisms. Even more, linear isomorphisms on primitive elements extend to Hopf algebra isomorphisms. The main idea is that linear combination of generators can be generators, e.g. $\Q[x+y,x-y]=\Q[x,y]$.

\begin{proposition}
    \label{thm:PrimitiveIsEnough}
    Suppose that $(\curlyH_1,m_1,\Delta_1)$ and $(\curlyH_2,m_2,\Delta_2)$ are bicommutative unipotent Hopf algebras.
    If $\varphi:\mathtt{Prim}(\curlyH_1,\Delta_1)\to \mathtt{Prim}(\curlyH_2,\Delta_2)$ is an invertible linear map, then its natural extension is a Hopf algebra isomorphism $\reallywidehat{\varphi}:(\curlyH_1,m_1,\Delta_1)\to(\curlyH_2,m_2,\Delta_2)$.
\end{proposition}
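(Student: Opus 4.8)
The plan is to assemble the statement from the machinery already in place, the only genuinely new input being the construction of a two-sided inverse. First I would invoke \cref{thm:cartier} on $\curlyH_1$: being bicommutative and unipotent, it is free as a commutative algebra on (a basis of) $\mathtt{Prim}(\curlyH_1)$. Fixing such a basis $\lrb{x_i}$ of $\mathtt{Prim}(\curlyH_1)$, the hypothesis that $\varphi$ is linear lets me form the natural extension $\reallywidehat{\varphi}:(\curlyH_1,m_1)\to(\curlyH_2,m_2)$ of \cref{def:natural_extension}, which is a well-defined algebra homomorphism by construction.

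Next I would verify the hypotheses of \cref{thm:freePrimitive}. On each generator the $n=1$ case of the definition gives $\reallywidehat{\varphi}(x_i)=\varphi(x_i)$, and since $\reallywidehat{\varphi}$ is linear it agrees with $\varphi$ on all of $\mathtt{Prim}(\curlyH_1)$; as $\varphi$ takes values in $\mathtt{Prim}(\curlyH_2)$ by assumption, this yields $\reallywidehat{\varphi}\lrp{\mathtt{Prim}(\curlyH_1,\Delta_1)}\subset\mathtt{Prim}(\curlyH_2,\Delta_2)$. \cref{thm:freePrimitive} then immediately gives that $\reallywidehat{\varphi}$ is a coalgebra homomorphism as well, hence a bialgebra morphism; since both sides are Hopf algebras, it automatically intertwines the antipodes and is a Hopf algebra morphism.

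It remains to prove bijectivity, which is where invertibility of $\varphi$ enters. Because $\varphi^{-1}:\mathtt{Prim}(\curlyH_2,\Delta_2)\to\mathtt{Prim}(\curlyH_1,\Delta_1)$ is again linear and $\curlyH_2$ is free on its primitives by \cref{thm:cartier}, I can form the natural extension $\reallywidehat{\varphi^{-1}}:(\curlyH_2,m_2)\to(\curlyH_1,m_1)$. The key point is that $\reallywidehat{\varphi^{-1}}\circ\reallywidehat{\varphi}$ and $\id_{\curlyH_1}$ are both algebra homomorphisms of the \emph{free} algebra $\curlyH_1$, so it suffices to compare them on generators: for each $x_i$ one has $\reallywidehat{\varphi}(x_i)=\varphi(x_i)\in\mathtt{Prim}(\curlyH_2)$, whence $\reallywidehat{\varphi^{-1}}(\varphi(x_i))=\varphi^{-1}(\varphi(x_i))=x_i$. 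The two homomorphisms thus agree on generators and therefore coincide; a symmetric argument gives $\reallywidehat{\varphi}\circ\reallywidehat{\varphi^{-1}}=\id_{\curlyH_2}$. Hence $\reallywidehat{\varphi}$ is a bijective bialgebra morphism between Hopf algebras, i.e. a Hopf algebra isomorphism.

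I expect the main obstacle to be bookkeeping rather than conceptual. One must keep straight that natural extensions are taken with respect to bases of primitives on \emph{both} sides, and that the step ``agreeing on generators implies equality of algebra homomorphisms'' is exactly the universal property of the free algebra furnished by \cref{thm:cartier}. The delicate comultiplicative compatibility, usually the crux, has already been isolated in \cref{thm:freePrimitive}, so no fresh coproduct computation is required here.
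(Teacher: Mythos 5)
Your proof is correct and follows essentially the same route as the paper's: \cref{thm:cartier} for freeness on primitives, \cref{thm:freePrimitive} for the coalgebra property, and the automatic antipode compatibility of \cref{thm:homBiHopf}. The only minor difference is in the bijectivity step, where you explicitly extend $\varphi^{-1}$ and compare algebra homomorphisms on free generators, while the paper simply observes that, since $\varphi$ carries a basis $\lrb{x_i}$ of $\mathtt{Prim}(\curlyH_1,\Delta_1)$ to a basis of $\mathtt{Prim}(\curlyH_2,\Delta_2)$ that freely generates $(\curlyH_2,m_2)$, the extension $\reallywidehat{\varphi}$ is injective and surjective by construction (and then cites \cref{thm.invertibleHomomorphisms}, which your two-sided-inverse construction makes explicit).
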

\begin{proof}
    \cref{thm:cartier} implies that $(\curlyH_1,m_1), (\curlyH_2,m_2)$ are freely  generated by $\lrb{x_i}\subset\mathtt{Prim}(\curlyH_1,\Delta_1)$ and $\lrb{y_j}\subset\mathtt{Prim}(\curlyH_2,\Delta_2)$, respectively. Since $\varphi$ is a linear isomorphism $\varphi(\lrb{x_i}) \subset \mathtt{Prim}(\curlyH_2,\Delta_2)$  is also a basis for $\mathtt{Prim}(\curlyH_2,\Delta_2)$ and freely generates $(\curlyH_2,m_2)$. Therefore, $\widehat{\varphi}$ is injective and surjective by construction. \cref{thm:freePrimitive} implies that $\widehat{\varphi}$ is a bijective bialgebra homomorphism. By \cref{thm.invertibleHomomorphisms} and~\cref{thm:homBiHopf}, it is a Hopf algebra isomorphism.
\end{proof}

We note that the reach of \cref{thm:PrimitiveIsEnough} can be extended to cocommutative unipotent bialgebras in the more general version of~\cref{thm:cartier} by Cartier~\cite[Theorem 4.3.1]{cartier2021classical}.

\subsection{The polynomial Hopf algebra of graphs}
\label{ssec:polynomial Hopf algebra}

One common technique for understanding Hopf algebras is comparing them to well-known objects.
The preferred candidate for us is $(\LFinGraph,\cdot_{\hom},\Delta_{\hom^{\prime}})$, commonly called the symmetric, polynomial, or free commutative Hopf algebra~\cite{bib:hazewinkel2010algebras,cartier2021classical,grinberg2020hopf}. This preference is persistent in the literature, in~\cite[Section 9]{bib:schmitt1994incidence}, Schmitt calls $\Delta_{\hom^{\prime}}$ ``the usual coproduct''. Some also call it the \textit{shuffle} Hopf algebra in the commutative alphabet of graphs~\cite{diehl2020generalized,cartier2021classical}. 
As we use \cref{thm:cartier} for this comparison, it is necessary to determine which elements are primitive for $\Delta_{\hom}$.
Unsurprisingly, these are the connected graphs.

\begin{lemma}
\label{lemma:primitive_elements}
Let us denote the connected graphs by $\FinGraph^0 := \lrb{ \tau \in\FinGraph |\, \tau \text{ is connected} }$. The primitive elements of both $\Delta_{\hom}$, and $\Delta_{\hom^{\prime}}$ are given by the span of the connected graphs, i.e.
\begin{align*}
  \mathtt{Prim}\lrp{\LFinGraph,\Delta_{\hom}} &= \langle\FinGraph^0\rangle_{\Q}=\mathtt{Prim}\lrp{\LFinGraph,\Delta_{\hom^{\prime}}}.
\end{align*}
\end{lemma}

\begin{proof}
    For $\Delta_{\hom^{\prime}}$, it follows from its definition that $ \langle\FinGraph^0\rangle_{\Q}\subseteq\mathtt{Prim}\lrp{\LFinGraph,\Delta_{\hom^{\prime}}}.$
    It is immediate to see that also $\langle \FinGraph^0 \rangle_{\Q} \subseteq \mathtt{Prim}\lrp{\LFinGraph,\Delta_{\hom}}$. Now, consider a linear combination of basis elements, such that at least one of the element is not a connected graph, then
    \begin{align*}
        \Delta_{\hom^{\prime}}\left(\sum c_{i}\tau_{i}\right) = \sum c_{i}\left(\e \otimes \tau_{i} + \tau_{i} \otimes \e\right) + \sum c_{i}(*)
    \end{align*}
    and now for $\sum c_{i}(*)=0$ implies that all $c_{i}=0$ because the terms in $(*)$ are (also) linearly independent. An analogous argument shows that $\lra{ \FinGraph^0 }_{\Q} =\mathtt{Prim}\lrp{\LFinGraph,\Delta_{\hom}}$.
\end{proof}

Aided by \cref{thm:cartier}, we construct an isomorphism from the polynomial Hopf algebra to the other Hopf algebras of \cref{thm.HopfalgebrasList}.

\begin{theorem}
    \label{thm:AllIsomorphisms}
    The inclusion $ \psi: \langle\FinGraph^{0}\rangle_{\Q} \to \LFinGraph$
    extends naturally --in the sense of \cref{def:natural_extension}-- to the Hopf algebra isomorphisms,
    \begin{align*}
        \widehat{\psi_{\dagger}} : (\langle\FinGraph\rangle_{\Q},\cdot_{\hom},\Delta_{\hom^{\prime}})\to \mathcal{H_{\dagger}},
    \end{align*}
    where $\mathcal{H_{\dagger}}$, denotes any of the Hopf algebra of \cref{thm.HopfalgebrasList}, $$\dagger = \{(\cdot_{\hom},\Delta_{\hom^{\prime}}), 
 (\cdot_{\hom^{\prime}},\Delta_{\hom}), (\cdot_{\mono},\Delta_{\hom^{\prime}}), (\cdot_{\mono^{\prime}},\Delta_{\hom}), (\cdot_{\regmono},\Delta_{\hom^{\prime}}), (\cdot_{\regmono^{\prime}},\Delta_{\hom})\}.$$ In particular, this implies that, all of them, as algebras, are free commutative on the set of connected graphs.
\end{theorem}

\begin{remark}
    The isomorphism $\widehat{\psi}_{(\cdot_{\hom^{\prime}},\Delta_{\hom})}$ is the well-known self-duality of the polynomial Hopf algebra, while $\widehat{\psi}_{(\cdot_{\regmono^{\prime}},\Delta_{\hom})}$ was given by \cite{penaguiao2020pattern}. In \cite{bib:schmitt1994incidence}, Schmitt showed that the cocommutative incidence Hopf algebras (of graphs), i.e. $(\LFinGraph,\cdot_{\hom},\Delta_{\mathsf{Schmitt}})$ (see \cref{remark:schmittinc}), is also isomorphic to $(\LFinGraph,\cdot_{\hom},\Delta_{\hom^{\prime}})$.
\end{remark}

\begin{proof}
  All these Hopf algebras are bicommutative and unipotent, so we apply~\cref{thm:cartier}. 
  
  Using \cref{lemma:primitive_elements}, we know that $\texttt{Prim}(\curlyH,\Delta_{\hom})= \texttt{Prim}(\curlyH,\Delta_{\hom^{\prime}})= \langle \FinGraph^{0} \rangle_{\Q}$. Therefore the inclusion $ \psi : \langle\FinGraph^{0} \rangle_{\Q} \to \LFinGraph $
  extends to an isomorphism between Hopf algebras
\begin{align*}
        \widehat{\psi_{\dagger}} : (\LFinGraph,\cdot_{\hom},\Delta_{\hom^{\prime}})\to \mathcal{H_{\dagger}},
    \end{align*}
    where $\dagger = \{(\cdot_{\hom},\Delta_{\hom^{\prime}}),(\cdot_{\hom^{\prime}},\Delta_{\hom}),(\cdot_{\mono},\Delta_{\hom^{\prime}}),(\cdot_{\mono^{\prime}},\Delta_{\hom}),(\cdot_{\regmono},\Delta_{\hom^{\prime}}),(\cdot_{\regmono^{\prime}},\Delta_{\hom})\}$.
\end{proof}

The isomorphy between Hopf algebras of \cref{thm.HopfalgebrasList} will be shown again in \cref{sec:Signatures_translation}, using different Hopf algebra maps.

\begin{remark}
  As a consequence of \cref{thm:AllIsomorphisms}, every graph can be uniquely written as a polynomial on connected graphs. As an example:

    \begin{align*}
        \edge\edge &= -\frac{1}{2}\edge -\frac{1}{2}\cherry  + \frac{1}{2}\edge\cdot_{\mono^{\prime}}\edge.
    \end{align*}

\end{remark}

\section{Signatures and translation between counting}
\label{sec:Signatures_translation}

One can find various classes of definitions for subgraph counting, see for instance \cite{curticapean2017homomorphisms,biggs1978cluster,borgs2006counting,bravo2021principled,penaguiao2020pattern,bib:BNH2015}. For the most part, such definitions of counting are based on enumerative combinatorics, either based on some for of set-restrictions or based on graph homomorphisms.

\subsection{Counting functions}
\label{ssec:CountingDefinitions}

The following definitions and theorems provide a formal setting to speak of graph counting functions. Due to the variety of them, we use the notation $c^{\dagger}_\alpha(\Gamma)$, where the super-index `$\dagger$' keeps a record of the sort of counting that we perform. As a cultural note, 
in a statistical setting, the graph $\alpha$ resp. $\Gamma$ are usually called a \textbf{pattern} resp. \textbf{sample}.
The following counting function was introduced in~\cite{lovasz1967operations}. 

\begin{definition}[Homomorphisms counting]
\label{def:homcount}
For $\sigma,\Upsilon \in \FinGraph$, define
\begin{align*}
c^\hom_\sigma(\Upsilon)   &:= \lrv{ \Hom(\sigma,\Upsilon) }
\end{align*} 
\end{definition}

Note that $c^\hom$ is \emph{not} counting sub-graphs. We also introduce the counting scaled by the cardinality of the automorphism group:

\begin{definition}[Scaled homorphisms counting]
    Let $\tau,\Lambda\in\FinGraph$. 
\begin{align*}
c_\sigma^{\hom^{\prime}}(\Upsilon) &:=
        \frac{\lrv{\Hom(\sigma,\Upsilon)}}{\lrv{\Aut(\sigma))}}.
    \end{align*}
\end{definition}
which is again \emph{not} counting subgraphs, as it does not have to be a non-negative integer.

We now introduce a counting function that is based on graph monomorphisms.
\begin{definition}[Monomorphisms counting]
    Let $\tau,\Lambda\in\FinGraph$. 
\begin{align*}
c_\sigma^\mono(\Upsilon) &:=
        \lrv{\Mono(\sigma,\Upsilon)}.
    \end{align*}
\end{definition}

which is an unnormalized counting of subgraphs.

\begin{definition}
    \label{def.CountingFunctionedge}
    
    Suppose that  $\tau,\Lambda\in\FinGraph$.
    Define the 
    \textbf{sub-graph counting} of $\tau$ inside of $\Lambda$
    as,
\begin{align*}
  c_{\tau}^{\mono^{\prime}}(\Lambda) :&= \lrv{\lrb{A \subseteq V(\Lambda), B\subseteq E(\Lambda) \mid \cup B \subseteq A,\; \Lambda\evaluatedAt{A,B} \cong \tau} }
\end{align*}
\end{definition}

These two counting operations are basically the same, as shown the following result.
\begin{proposition}
  \label{thm.CountingERenumerative}
  Suppose that $\Lambda$ is a fixed graph in $\FinGraph$. If $\tau \in \FinGraph$, then 
  \begin{align*}
      c_\tau^{\mono^{\prime}}(\Lambda)
      &=
      \frac{ \lrv{ \Mono(\tau,\Lambda) } }{|\Aut(\tau)|}.
    \end{align*}
  \end{proposition}
  \begin{proof}
  First we realize that
      \begin{align*}
        \Mono(\tau,\Lambda) &= \bigcup_{\substack{A\subset V(\Lambda)\\ B\subset E(\Lambda)\\ \cup B = A \\ \Lambda\evaluatedAt{A,B} \cong \tau}} \Iso\lrp{\tau,\,\Lambda\evaluatedAt{A,B} },
    \end{align*}
    and then it follows that
    \begin{align*}
        \Big|\bigcup_{\substack{A\subset V(\Lambda)\\ B\subset E(\Lambda)\\ \cup B = A \\ \Lambda\evaluatedAt{A,B} \cong \tau}} \Iso\lrp{\tau,\,\Lambda\evaluatedAt{A,B} }\Big| &=
        \sum_{\substack{A\subset V(\Lambda)\\ B\subset E(\Lambda)\\ \cup B = A \\ \Lambda\evaluatedAt{A,B} \cong \tau}} \Big|\Iso\lrp{\tau,\,\Lambda\evaluatedAt{A,B} }\Big|
       \\&=\sum_{\substack{A\subset V(\Lambda)\\ B\subset E(\Lambda)\\ \cup B = A \\ \Lambda\evaluatedAt{A,B} \cong \tau}} \Big|\Aut\lrp{\tau}\Big| = c^{\mono^{\prime}}_\tau(\Lambda) \Big|\Aut\lrp{\tau}\Big|
    \end{align*}
  \end{proof}

\begin{proposition}
    \label{thm.CountingFunctionedge}
    If $\Lambda \in \FinGraphNo$ and $\tau \in \FinGraphNo$, i.e. if they both have no isolated vertex, subgraph counting based only on edge subsets can be obtained from convolutions, using the restricted and corestricted version of the coproduct, $\Delta_{\underbar{\mono^{\prime}}}:\LFinGraphNo \to \LFinGraphNo \otimes \LFinGraphNo$ (see \Cref{rem:quasishuffle}, \cref{rem:quasishuffle:borie})
    \begin{align*}
        c_\tau^{\mono^{\prime}}(\Lambda) &=  \frac{\lrp{\tau^{*} *_{\cdot_{\mono^{\prime}}^{\prime}} \zeta }}{2^{|E(\tau)|}}(\Lambda).
       \end{align*}
    \end{proposition}
    \begin{proof}
   \begin{align*}
    \lrp{\zeta *_{\underbar{\mono^{\prime}}} \tau^{*}}(\Lambda)
    &=
    m_{\Q} \circ (\zeta \otimes \tau^{*}) \circ \Delta_{\underbar{\mono^{\prime}}}(\Lambda)  \\
    &=
    \sum_{\substack{A \subseteq E(\Lambda)\\ E(\Lambda)\setminus A \subseteq B \subseteq E(\Lambda)}}
    m_{\Q} \circ (\zeta \otimes \tau^{*}) \Lambda\evaluatedAt{A} \otimes \Lambda\evaluatedAt{B} \\
    &=
    \sum_{\substack{A \subseteq E(\Lambda)\\ E(\Lambda)\setminus A \subseteq B \subseteq E(\Lambda)}}
    m_{\Q} \circ (\zeta \otimes \tau^{*}) \Lambda\evaluatedAt{B} \otimes \Lambda\evaluatedAt{A} \\
    &=
    \sum_{\substack{A \subseteq E(\Lambda)\\ E(\Lambda)\setminus A \subseteq B \subseteq E(\Lambda)}}\tau^{*}(\Lambda\evaluatedAt{A}) \\
    &=
    2^{|E(\tau)|}
    \sum_{A \subseteq E(\Lambda)}
    \tau^{*}( \Lambda\evaluatedAt{A} ) \\
    &=
    2^{|E(\tau)|} c^{\mono^{\prime}}_\tau(\Lambda).
  \end{align*}
    \end{proof}

We finally define two counting operations that are based on regular monomorphisms.
\begin{definition}
      \label{def.CountingFunctionmono}
      For $\sigma$ and $\Upsilon$ in $\FinGraph$, define the 
    \textbf{regular monorphisms counting} of $\sigma$ inside of $\Upsilon$
    as
      \begin{align*}
        c_{\sigma}^{\regmono}(\Upsilon) :&= |\RegMono(\sigma,\Upsilon)|.
  \end{align*}
\end{definition}

 Counting induced subgraphs is called \textbf{motif counting}, see e.g.~\cite{bressan2018motif,bib:MSOIKCA2002}.

\begin{definition}
      \label{def.CountingFunctionvertex}
      For $\sigma$ and $\Upsilon$ in $\FinGraph$, define the 
    \textbf{vertex-induced counting} of $\sigma$ inside of $\Upsilon$
    as
      \begin{align*}
        c^{\regmono^{\prime}}_\sigma(\Upsilon) :&= \lrv{\lrb{U \subseteq V(\Upsilon) \mid \Upsilon_U \cong \sigma} }.
  \end{align*}
\end{definition}
 
We have again a reformulation in terms of graph homomorphisms.

\begin{proposition}
    Suppose that $\Upsilon\in\FinGraph$ is fixed. If $\sigma\in\FinGraph$, then
    \begin{align*}
           c^{\regmono^{\prime}}_\sigma(\Upsilon) = \frac{\lrv{ \RegMono(\sigma,\Upsilon) }}{|\Aut(\sigma)|}.
    \end{align*}
\end{proposition}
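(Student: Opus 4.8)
The plan is to mirror the proof of \cref{thm.CountingERenumerative}, replacing edge subsets and restricted subgraphs by vertex subsets and induced subgraphs, and replacing injective homomorphisms by regular monomorphisms (embeddings). The key observation is that a regular monomorphism $\phi \in \RegMono(\sigma,\Upsilon)$ is, by definition, a homomorphism that restricts to an isomorphism $\phi : \sigma \to \Upsilon_{\im\phi}$; in particular it is injective on vertices, so its image $A := \im\phi \subseteq V(\Upsilon)$ satisfies $\Upsilon_A \cong \sigma$.

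First I would partition $\RegMono(\sigma,\Upsilon)$ according to the image vertex set. For a fixed $A \subseteq V(\Upsilon)$, a homomorphism $\phi$ with $\im\phi = A$ is a regular monomorphism precisely when $\phi : \sigma \to \Upsilon_A$ is a graph isomorphism; hence the set of regular monomorphisms with image $A$ is exactly $\Iso(\sigma,\Upsilon_A)$. This set is nonempty if and only if $\Upsilon_A \cong \sigma$, and the number of such subsets $A$ is, by \cref{def.CountingFunctionvertex}, precisely $c^\vi_\sigma(\Upsilon)$.

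Next I would count $|\Iso(\sigma,\Upsilon_A)|$ for each contributing $A$. Fixing one isomorphism $\psi_0 : \sigma \to \Upsilon_A$, every isomorphism $\sigma \to \Upsilon_A$ is of the form $\psi_0 \circ g$ with $g \in \Aut(\sigma)$, and $g \mapsto \psi_0 \circ g$ is a bijection between $\Aut(\sigma)$ and $\Iso(\sigma,\Upsilon_A)$. Thus $|\Iso(\sigma,\Upsilon_A)| = |\Aut(\sigma)|$ whenever $\Upsilon_A \cong \sigma$. Summing over the partition then gives
\[
  |\RegMono(\sigma,\Upsilon)| = \sum_{A \,:\, \Upsilon_A \cong \sigma} |\Iso(\sigma,\Upsilon_A)| = c^\vi_\sigma(\Upsilon) \cdot |\Aut(\sigma)|,
\]
and dividing by $|\Aut(\sigma)|$ yields the claim.

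This argument is essentially routine; the only point requiring care is the precise unwinding of the definition of a regular monomorphism, namely that it is exactly a homomorphism that is an isomorphism onto the \emph{induced} subgraph on its image (so that non-edges are reflected, not merely edges preserved). This is what forces the use of $\RegMono$ rather than $\Mono$ here, in contrast with the edge-restricted case of \cref{thm.CountingERenumerative}, where injective homomorphisms already suffice because restricted subgraphs only track edges.
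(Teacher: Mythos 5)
Your proof is correct and takes essentially the same route as the paper's: the paper's (very terse) proof likewise rewrites $c^\vi_\sigma(\Upsilon)$ as a sum over $A\subseteq V(\Upsilon)$ of $\lrv{\Iso(\sigma,\Upsilon_A)}/\lrv{\Aut(\sigma)}$, which is exactly your partition of $\RegMono(\sigma,\Upsilon)$ by image vertex set together with the count $\lrv{\Iso(\sigma,\Upsilon_A)}=\lrv{\Aut(\sigma)}$. Your version merely spells out the details the paper leaves implicit (the bijection $g\mapsto\psi_0\circ g$ and the identification of regular monomorphisms with isomorphisms onto induced subgraphs), so there is nothing to correct.
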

  \begin{proof}
    This time, the equality is due to 
    \begin{align*}
      \RegMono(\sigma,\Upsilon) 
                             &=  
     \bigcup_{U\subset V(\Upsilon)} \Iso(\sigma,\Upsilon_U).
    \end{align*}
    Now proceed
    analogously to the proof of in~\cref{thm.CountingERenumerative}.
\end{proof}

Analogous to \cref{thm.CountingFunctionedge}, we obtain the following. The proof is very similar and omitted.
\begin{proposition}
    \label{thm.CountingFunctionvertex}
    Suppose that $\Upsilon\in\FinGraph$.
    If $\sigma\in\FinGraph$, then
    \begin{align*}
      c^{\regmono^{\prime}}_\sigma(\Upsilon)
     &=
     \frac{\lrp{\zeta *_{\cdot_{\regmono^{\prime}}} \sigma^{*}}(\Upsilon)}{2^{|V(\sigma)|}}.
    \end{align*}
\end{proposition}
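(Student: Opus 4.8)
The plan is to mirror the proof of \cref{thm.CountingFunctionedge} essentially verbatim, exchanging the roles of edges and vertices: the coproduct $\Delta_\shuffle$ is replaced by $\Delta_\is$, the coproduct $\Delta_\qs$ by $\Delta_\qis$, the edge-restriction $\Lambda\evaluatedAt{A}$ by the vertex-induction $\Upsilon_U$, and the exponent $|E(\tau)|$ by $|V(\sigma)|$. First I would unfold the convolution from its definition, writing $\lrp{\zeta *_\is \sigma^*}(\Upsilon) = m_\R \circ (\zeta \otimes \sigma^*) \circ \Delta_\is(\Upsilon)$, and then substitute the explicit formula for $\Delta_\is$, turning the right-hand side into a sum over subsets $U \subseteq V(\Upsilon)$ of the scalar $\zeta(\Upsilon_U)\,\sigma^*(\Upsilon_{V(\Upsilon)\setminus U})$.

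Since $\zeta$ sends every graph to $1$, the first tensor factor collapses, leaving $\sum_{U \subseteq V(\Upsilon)} \sigma^*(\Upsilon_{V(\Upsilon)\setminus U})$. Re-indexing by the complement bijection $U \mapsto V(\Upsilon)\setminus U$ (equivalently, invoking the cocommutativity of $\Delta_\is$) rewrites this as $\sum_{U \subseteq V(\Upsilon)} \sigma^*(\Upsilon_U)$, and because $\sigma^*(\Upsilon_U)$ equals $1$ exactly when $\Upsilon_U \cong \sigma$ and $0$ otherwise, the sum is precisely $\lrv{\lrb{U \subseteq V(\Upsilon) \mid \Upsilon_U \cong \sigma}} = c^\vi_\sigma(\Upsilon)$. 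This disposes of the first equality.

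For the quasi-shuffle identity I would proceed identically, reaching $\lrp{\zeta *_\qis \sigma^*}(\Upsilon) = \sum_{(I,J)} \sigma^*(\Upsilon_J)$, where $(I,J)$ ranges over $I \subseteq V(\Upsilon)$ with $V(\Upsilon)\setminus I \subseteq J \subseteq V(\Upsilon)$. The only genuinely combinatorial step — and the main (if mild) obstacle — is to count, for each fixed $J$, the number of admissible $I$: the constraint $V(\Upsilon)\setminus I \subseteq J$ is equivalent to $V(\Upsilon)\setminus J \subseteq I$, so $I$ ranges over all subsets of $V(\Upsilon)$ containing the fixed set $V(\Upsilon)\setminus J$, of which there are exactly $2^{|J|}$. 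Hence the double sum collapses to $\sum_{J \subseteq V(\Upsilon)} 2^{|J|}\,\sigma^*(\Upsilon_J)$. Finally, $\sigma^*(\Upsilon_J)$ vanishes unless $\Upsilon_J \cong \sigma$, in which case $|J| = |V(\sigma)|$, so the weight $2^{|J|}$ is the constant $2^{|V(\sigma)|}$ on every surviving term; pulling it out yields $\lrp{\zeta *_\qis \sigma^*}(\Upsilon) = 2^{|V(\sigma)|}\,c^\vi_\sigma(\Upsilon)$, which is the second equality after dividing by $2^{|V(\sigma)|}$.
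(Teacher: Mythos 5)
Your proof is correct and is exactly the argument the paper intends: the paper's own proof of this proposition consists of the single remark that it is analogous to \cref{thm.CountingFunctionedge}, and you have carried out that analogy faithfully, including the one substantive step of counting the $2^{|J|}$ admissible sets $I \supseteq V(\Upsilon)\setminus J$ and noting that $\sigma^*(\Upsilon_J)\neq 0$ forces $|J|=|V(\sigma)|$. Nothing is missing.
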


\subsection{Graph counting signatures}
\label{ssec:Signatures}

In this section, we aim at extending the notion of signatures to the setting of graphs. 
A time-series signature is a linear functional over a combinatorial Hopf algebra with multiplicative and comultiplicative properties. Each evaluation of this functional provides information on iterated-sums/integrals of a time-dependent function. The interaction of these functionals with the Hopf algebra operations allows us to simplify computations.

We can find plenty of examples of products and coproducts of graphs and plenty of graph functionals or indices~\cite{bib:MSOIKCA2002,bib:ROBSB2019,bib:GBH2020}. Among those, some products seem to respect counting functions, e.g.~\cite{maugis2020testing,penaguiao2020pattern,lovasz1967operations}. Such  interactions motivates us to define a collection of \textit{signatures for graphs}.
In the following, we will prove the signature property and further show that all these functionals are in some sense equivalent. Moreover, the equivalence is given by Hopf algebra isomorphisms.

\begin{definition}[Graph counting signatures]

    Assume that $\Lambda\in\FinGraph$ is a fixed graph.
    We define the following \textbf{graph counting signatures}.
    \begin{enumerate}

 \item \subitem$\GChom:\LFinGraph^\circ\to\LFinGraph^*$ given by  $
            \GChom(\Lambda) := \sum_{\sigma\in\FinGraph} c_\sigma^{\hom}(\Lambda)\ \sigma.$

             \subitem  $\GChomprime:\LFinGraph^\circ\to\LFinGraph^*$ given by  $
            \GChomprime(\Lambda) := \sum_{\sigma\in\FinGraph} c_\sigma^{\hom^{\prime}}(\Lambda)\ \sigma.$
              \item \subitem$\GCmono:\LFinGraph^\circ \to \LFinGraph^\circ$ given by  
        $ \GCmono(\Lambda) := \sum_{\sigma\in\FinGraph} c^\mono_\sigma(\Lambda)\ \sigma.$
    
        \subitem $\GCmonoprime:\LFinGraph^\circ \to \LFinGraph^\circ$ given by  
        $ \GCmonoprime(\Lambda) := \sum_{\sigma\in\FinGraph} c_{\sigma}^{\mono^{\prime}}(\Lambda)\ \sigma.$

  \item \subitem$\GCregmono:\LFinGraph^\circ\to\LFinGraph^\circ$ given by  $
            \GCregmono(\Lambda) := \sum_{\sigma\in\FinGraph} c_\sigma^{\regmono}(\Lambda)\ \sigma.$ 
        
        \subitem $\GCregmonoprime:\LFinGraph^\circ\to\LFinGraph^\circ$ given by  $
            \GCregmonoprime(\Lambda) := \sum_{\sigma\in\FinGraph} c_\sigma^{\regmono^{\prime}}(\Lambda)\ \sigma.$
            \end{enumerate}
\end{definition}

  \begin{remark}
  These functionals were defined in a fashion that resembles the iterated-integrals signature, i.e.
  \begin{align*}
    \lra{ \mathtt{GC}^{\dagger}(\Gamma) , \alpha } =  c^{\dagger}_\alpha(\Gamma).
  \end{align*}
  \end{remark}

\begin{example}
\begin{align*}
    \GChom(\ \tria) &= \e + 3 \vertex + 9 \vertex \vertex +  6 \edge + 27 \vertex \vertex \vertex + 18 \vertex \edge +  12 \cherry + 6 \tria + \cdots\\
    \GChomprime(\ \tria) &= \e + 3 \vertex + \frac{9}{2} \vertex \vertex +  3 \edge + \frac{9}{2} \vertex \vertex \vertex + 9 \vertex \edge +  6 \cherry + \tria + \cdots\\
       \GCmono(\ \tria) &= \e + 3 \vertex + 6 \vertex \vertex +  6 \edge + 6 \vertex \vertex \vertex + 6 \vertex \edge +  6 \cherry + 6\tria\\
       \GCmonoprime(\ \tria) &= \e + 3 \vertex + 3 \vertex \vertex +  3 \edge + \vertex \vertex \vertex + 3 \vertex \edge +  3 \cherry + \tria\\
       \GCregmono(\ \tria) &= \e + 3 \vertex + 6 \edge + 6\ \tria\\
       \GCregmonoprime(\ \tria) &= \e + 3 \vertex + 3 \edge + \ \tria
\end{align*}
\end{example}

$\GCmono,\GCmonoprime,\GCregmono$, $\GCregmonoprime$ can all be considered as changes of basis.
\begin{lemma}
\label{lemma:GC_linend}
The linear functions $\GCmono,\GCmonoprime,\GCregmono,\GCregmonoprime$ are linear automorphism.
\end{lemma}
\begin{proof}
We show $\GCmonoprime$. There are analogus arguments for the others.
We assign a total order on $\FinGraph$, first by number of vertices and then by number of edges, and resolve ties arbitrarily. For such an order, the matrix of $[\GCmonoprime]_{\FinGraph}$ is an invertible lower-triangular.
\begin{enumerate}
    \item $\lra{\GCmonoprime(\Lambda),\sigma} \not=0$ and $|V(\sigma)| = |V(\Lambda)|$ together with $|E(\sigma)| = |E(\Lambda)|$ implies $\sigma = \Lambda$. And hence, any choice of resolving ties is consistent with a triangular structure.
    \item $\lra{\GCmonoprime(\Lambda),\sigma} \not=0 $ implies that $\lrv{V(\sigma)} \leq \lrv{V(\Lambda)}$ and $\lrv{E(\sigma)} \leq \lrv{E(\Lambda)}$, i. e. the elements above the main diagonal are zero.
    \item It is invertible since the diagonal satisfies  $$\lra{\GCmonoprime(\Lambda),\Lambda}=1 \not= 0.$$
\end{enumerate}
\end{proof}

As a consequence of this lemma, $\GCmonoprime(\Lambda) = \GCmonoprime(\Lambda^{\prime})$ implies $\Lambda = \Lambda^{\prime}$.  This is an analog for graph counting to Lov\'{a}sz' result in \cite{lovasz1967operations} for $\GChom$.

\subsection{Multiplicative and comultiplicative properties}
\label{ssec:mult_comult_prop}

    We will now show that, given a fixed graph, the functionals $\GChom( \Lambda),\GCmono(\Lambda),$ and $\GCregmono(\Lambda)$ are characters over the appropriate Hopf algebras and furthermore satisfy a version of Chen's identity.

\begin{theorem}[Character property]
\label{thm:character property}

Suppose that $\Lambda \in\FinGraph$ is a fixed graph.

\begin{enumerate}
  \item 
    The functional $\GChom(\Lambda)$ is a character over $(\LFinGraph,\cdot_{\hom})$, i.e.
    \begin{align*}
       \LA \GChom(\Lambda), \sigma_{1} \RA
       \cdot
       \LA \GChom(\Lambda), \sigma_{2} \RA
       =
       \LA \GChom(\Lambda), \sigma_{1} \cdot_{\hom} \sigma_{2} \RA
    \end{align*}

  \item
  The functional $\GCmono(\Lambda)$ is a character over $(\LFinGraph,\cdot_{\mono})$, i.e.
    \begin{align*}
       \LA \GCmono(\Lambda), \sigma_{1} \RA
       \cdot
       \LA \GCmono(\Lambda), \sigma_{2} \RA
       =
       \LA \GCmono(\Lambda), \sigma_{1} \cdot_{\mono}\sigma_{2} \RA
    \end{align*}
     \item
     The functional $ \mathtt{GC}^{\regmono}(\Lambda)$ is a character over $(\LFinGraph,\cdot_{\regmono})$, i.e.
    \begin{align*}
       \LA \GCregmono(\Lambda), \sigma_{1} \RA
       \cdot
       \LA \GCregmono(\Lambda), \sigma_{2} \RA
       =
       \LA \GCregmono(\Lambda), \sigma_{1} \cdot_{\regmono} \sigma_{2} \RA.
    \end{align*}
\end{enumerate}
\end{theorem}

\begin{proof}~
  \begin{enumerate}
    \item This is a well known result, see for example \cite[Thm.~ 3.6]{lovasz1967operations}. The proof follows immediately by observing that there is a bijection between 
\begin{align*}
\{(f,g)| f \in \Hom(\sigma_{1},\Lambda), g \in \Hom(\sigma_{2},\Lambda)\} \to \Hom(\sigma_{1}\cdot_{\hom}\sigma_{2},\Lambda)
\end{align*}
\begin{align*}
(f,g) \mapsto h,
\end{align*}
where $h(a) = f(a)$ if $a \in V(\sigma_{1})$ and $h(a) = g(a)$, otherwise.

\item Take the set
\begin{align*}
 \{(f,g)| f \in \Mono(\sigma_{1},\Lambda), g \in \Mono(\sigma_{2},\Lambda)\}.
\end{align*}

First,
\begin{align*}
| \{(f,g)| f \in \Mono(\sigma_{1},\Lambda), g \in \Mono(\sigma_{2},\Lambda)\}| &=
|\{h \in \Hom(\sigma_{1} \cdot_{\hom} \sigma_{2},\Lambda)| h\evaluatedAt{\sigma_{1}}, h\evaluatedAt{\sigma_{2}}\;\text{are monomorphisms}\}|.
\end{align*}

Indeed, there is a bijection $(f,g) \mapsto h$ where $h$ is defined as

\begin{align*}
        h(x) = 
    \begin{cases}
f(x), \;\text{if}\; x \in V(\sigma_1)  \\
g(x), \;\text{if}\; x \in V(\sigma_2).
\end{cases}
\end{align*}

We can then use $iv.$ of \cref{cor:factorization} to see that
\begin{align*}
&|\{h \in \Hom(\sigma_{1} \cdot_{\hom} \sigma_{2},\Lambda) \mid h\evaluatedAt{\sigma_{1}}, h\evaluatedAt{\sigma_{2}}\;\text{are monomorphisms}, \Lambda\evaluatedAt{h(V(\sigma_{1} \cdot_{\hom} \sigma_{2}),h(E(\sigma_{1} \cdot_{\hom} \sigma_{2}))}\cong \gamma\}| \\&= \frac{1}{\lrv{\Aut(\gamma)}}\
    \lrv{\Big\{ h \in \RegEpi(\sigma_{1} \cdot_{\hom} \sigma_{2},\gamma)\mid h\evaluatedAt{\sigma_{1}}, h\evaluatedAt{\sigma_{2}}\;\text{are monomorphisms}\Big\}}
    \lrv{ \Mono(\gamma,\Lambda) }.
  \end{align*}

\item Take the set
\begin{align*}
 \{(f,g)| f \in \RegMono(\sigma_{1},\Lambda), g \in \RegMono(\sigma_{2},\Lambda)\}.
\end{align*}

Let $h \in \mor(\sigma_{1} \cdot_{\hom} \sigma_{2},\Lambda)$ defined as

\begin{align*}
        h(x) = 
    \begin{cases}
f(x), \;\text{if}\; x \in V(\sigma_1)  \\
g(x), \;\text{if}\; x \in V(\sigma_2).
\end{cases}
\end{align*}

We can then use $iii.$ of \cref{cor:factorization}.
  \end{enumerate}
\end{proof}
\begin{remark}
From the definitions of $\GChomprime$, $\GCmonoprime$ and $\GCregmonoprime$ and \cref{remark:defacto3} it follows immediately that
\begin{align*}
       \LA \GChomprime(\Lambda), \tau_{1} \RA
       \cdot
       \LA \GChomprime(\Lambda), \tau_{2} \RA
       &=
       \LA \GChomprime(\Lambda), \tau_{1} \cdot_{\hom^{\prime}} \tau_{2} \RA\\
       \LA \GChomprime(\Lambda), \sigma_{1} \RA
       \cdot
       \LA \GCmonoprime(\Lambda), \sigma_{2} \RA
       &=
       \LA \GCmonoprime(\Lambda), \sigma_{1} \cdot_{\mono^{\prime}} \sigma_{2} \RA\\
       \LA \GCmonoprime(\Lambda), \sigma_{1} \RA
       \cdot
       \LA \GCregmonoprime(\Lambda), \sigma_{2} \RA
       &=
       \LA \GCregmonoprime(\Lambda), \sigma_{1} \cdot_{\regmono^{\prime}} \sigma_{2} \RA
   \end{align*}
\end{remark}
The result regarding $\GCmonoprime(\Lambda)$ is known in the computer science literature, see for example \cite[Lemma 2]{maugis2020testing}. It was stated also by \cite{bib:BNH2015} using a representation in symmetric functions instead of graph algebras. In \cite{penaguiao2020pattern}, we have also a description for the character property of $\GCregmonoprime$. 

\begin{remark}
\label{remark:connectedcounting}
  Let $\Lambda \in \FinGraph$ and $\tau\in \FinGraph$, then 
  \begin{align*}
    &\LA \GCmonoprime(\Lambda), \tau \RA = \sum_{i \in I} a_{i} \prod_{j \in J}\LA \GCmonoprime(\Lambda),\theta_{ij} \RA.
    \end{align*}
  where $a_{i} \in \Q$, $\theta_{ij} \in \FinGraph^{0}$, i.e., they are \underbar{connected}. This is folklore in the computer science community, and it is a consequence of \cref{thm:AllIsomorphisms} and \cref{thm:character property}.

    The respective result for $\GChom$ is immediate given that
  \begin{align*}
    &\LA \GChom(\Upsilon), \sigma \RA = \prod_{i=1}^{n}\LA \GChom(\Upsilon),\sigma_{i} \RA,
\end{align*}
where $\sigma = \sigma_{1} \cdot_{\hom} \cdots \cdot_{\hom}  \sigma_{n}$.
  \end{remark}

Another property common for signatures is Chen's identity. This is the compatibility of the signature with respect to a product in the parameter, not in the argument. We first prove the following lemma. Chen's identity will then simply follow.

\begin{lemma}
    \label{thm.AbstractChen}
    Let $\lrp{\curlyH,m,\Delta}$ be a bicommutative unipotent Hopf algebra.
    If $\varphi,\varphi_1,\varphi_2\in\curlyH^*$ are characters, and 
    \begin{align*}
        \lra{\varphi,x} = \lra{\varphi_1,x} + \lra{\varphi_2,x}
    \end{align*}
    for all $x\in\mathtt{Prim}(\curlyH,\Delta)$, then for all $a\in \curlyH$
    \begin{align*}
        \lra{\varphi,a} &= 
        \lra{\varphi_1 \otimes \varphi_2 , \Delta(a) }.
    \end{align*}
\end{lemma}

\begin{proof}
    From the hypothesis, for $x\in\texttt{Prim}(\curlyH,\Delta)$,
    \begin{align*}
        \lra{\varphi,x} &= \lra{\varphi_1,x} + \lra{\varphi_2,x} 
        \\ &=  \lra{\varphi_1,x}\lra{\varphi_2,1_\curlyH} + \lra{\varphi_1,1_\curlyH}\lra{\varphi_2,x} 
        \\ &= \lra{\varphi_1\otimes \varphi_2 , x\otimes 1_\curlyH + 1_\curlyH \otimes x}
        \\&= \lra{\varphi_1\otimes \varphi_2 , \Delta(x) }.
    \end{align*}
    For the $m$-monomial $a= \prod_{i=1}^n x_i $,
    we use the character property and the previous computation.
    \begin{align*}
        \lra{\varphi,a} &= \prod_i \lra{\varphi,x_i} 
        \\&= 
        \prod_i \lra{\varphi_1\otimes \varphi_2 , \Delta(x_i) }.
    \end{align*}
    Now, since $\varphi_1\otimes \varphi_2$ is a character for $m_{\curlyH\otimes\curlyH}$, this implies that
    \begin{align*}
        \lra{\varphi,x} &= \prod_i \lra{\varphi_1\otimes \varphi_2 , \Delta(x_i) }
        \\&= 
        \lra{\varphi_1\otimes \varphi_2 , \prod_i \Delta(x_i) }
        \\&= 
        \lra{\varphi_1\otimes \varphi_2 , \Delta\Big(\prod_i x_i\Big) }.
    \end{align*}
    The result follows from $\curlyH$ being freely generated by $\lrb{x_i}\subset\texttt{Prim}(\curlyH,\Delta)$.
\end{proof}

\begin{theorem}[Chen's identity]
  \label{thm:chen}
  Suppose that $\sigma,\Lambda,\Psi\in\FinGraph$ are fixed. If the notation $\dagger$ represents any of the symbols in $\lrb{\hom,\mono,\regmono}$, and $\ddagger$ represents $\lrb{\hom^{\prime},\mono^{\prime},\regmono^{\prime}}$ then it holds 
  \begin{align*}
        \lra{\mathtt{GC}^{\dagger}(\Lambda\cdot_{\hom}\Psi),\sigma} &= \lra{\mathtt{GC}^{\dagger}(\Lambda)\otimes\mathtt{GC}^{\dagger}(\Psi),\Delta_{\hom^{\prime}}(\sigma)}\\
     \lra{\mathtt{GC}^{\ddagger}(\Lambda\cdot_{\hom}\Psi),\sigma} &= \lra{\mathtt{GC}^{\ddagger}(\Lambda)\otimes\mathtt{GC}^{\ddagger}(\Psi),\Delta_{\hom}(\sigma)},
  \end{align*}

\end{theorem}
\begin{example}
\begin{align*}
\langle\GChom(\Lambda \cdot_{\hom} \Psi), \edge \edge\rangle&=\langle\GChom(\Lambda) \otimes \GChom(\Psi), \Delta_{\hom^{\prime}}(\edge \edge\:)\rangle\\
 \end{align*}
\end{example}

\begin{remark}
  Chen's identity for iterated sums (or integrals)
  yields a procedure of efficient calculation.
  \Cref{thm:chen} is very weak in this regard,
  since it does not allow to simplify the calculation of $\GChom(\Lambda)$ if
  $\Lambda$ is connected.
\end{remark}
\begin{proof}
    For any connected graph $\tau$, i.e. $\tau \in \mathtt{Prim}(\LFinGraph,\Delta_{\hom^{\prime}})$, the following equation holds true for $\dagger$ in $\lrb{\hom,\mono,\regmono}$.
    \begin{align*}
        \lra{\mathtt{GC}^{\dagger}(\Lambda\cdot_{\hom}\Psi),\sigma} &= \lra{\mathtt{GC}^{\dagger}(\Lambda),\sigma} +
        \lra{\mathtt{GC}^{\dagger}(\Psi),\sigma} .
    \end{align*}
    The proof follows from \cref{thm.AbstractChen}, since $\mathtt{GC}^{\dagger}(\Lambda\Psi),\mathtt{GC}^{\dagger}(\Lambda)$ and $\mathtt{GC}^{\dagger}(\Psi)$ are all characters for $\lrp{\LFinGraph,\dagger,\Delta_{\hom^{\prime}}}$. For the other three, the proof is analogous.
\end{proof}

\begin{remark}
Note that we are showing the well known fact that counting graph homomorphisms can be ``reduced'' to counting connected graphs on connected graphs. Using \cref{thm:character property} and \cref{thm:chen}, consider $\Lambda = \Lambda_{1}\cdots\Lambda_{n}$, and $\sigma = \sigma_{1}\cdots \sigma_{m}$ where the $\Lambda_{i}$'s and the $\sigma_{i}$'s are all connected, then, as an example, it holds:
\begin{align*}
    \lra{\GChom\left(\Lambda_{1}\cdots\Lambda_{n}\right), \sigma_{1}\cdots \sigma_{m}}&= \prod_{j=1}^{m}\lra{\GChom\left(\Lambda_{1}\cdots\Lambda_{n}\right),\sigma_{j}}\\
    &= \prod_{j=1}^{m}\lra{\bigotimes_{i=1}^{n}\left(\GChom(\Lambda_{i})\right),\Delta_{\hom^{\prime}}^{n}\sigma_{j}}\\
      &= \prod_{j=1}^{m}\Big(\sum_{i=1}^{n}\lra{\GChom\left(\Lambda_{i}\right),\sigma_{j}}\Big).
\end{align*}
\end{remark}

\begin{table}[H]
\centering
\begin{tabular}{|c|c|c|}
\hline
& Character property        & Chen's identity         \\ \hline $\GChom(\Lambda)$ & $\cdot_{\hom}$  &  $\Delta_{\hom^{\prime}}$                   \\ \hline
                   $\GChomprime(\Upsilon)$ & $\cdot_{\hom^{\prime}}$                        &  $\Delta_{\hom}$                                            \\ \hline
$\GCmono(\Lambda)$ & $\cdot_{\mono}$ & $\Delta_{\hom^{\prime}}$ \\ \hline
$\GCmonoprime(\Lambda)$ & $\cdot_{\mono^{\prime}}$                           & $\Delta_{\hom}$             \\ \hline
{}$\GCregmono(\Upsilon)$ & $\cdot_{\regmono}$                         & $\Delta_{\hom^{\prime}}$                                                 \\ \hline
$\GCregmonoprime(\Upsilon)$ & $\cdot_{\regmono^{\prime}}$                        & $\Delta_{\hom}$                                                \\ \hline
\end{tabular}
\caption*{Multiplicative (signature or character property) and comultiplicative properties (Chen) .}
\end{table}

\subsection{Translating between the counting characters}
\label{ssec:translating}

Here we show the equivalence of the different counting procedures. This equivalence is used in practice due to the computational efficiency of counting graph homomorphisms, see for instance \cite[Section 1.4]{curticapean2017homomorphisms}, and it is well known. Later on, we will show that these maps are Hopf algebra isomorphisms.

\begin{definition}
Define the linear maps $\LFinGraph \to \LFinGraph$:
\begin{align}
\label{eq:trans_regmono_mono}
  \Phi_{\regmono \leftarrow \mono}(\tau) 
    :&= \sum_\sigma\ \frac{\lrv{ \Mono(\tau,\sigma) \cap \Epi(\tau,\sigma) }}{|\Aut(\sigma)|} \ \sigma, \\
  \label{eq:trans_mono_hom}
  \Phi_{\mono \leftarrow \hom}(\tau)
  :&= \sum_\sigma \frac{\lrv{ \RegEpi(\tau,\sigma) }}{|\Aut(\sigma)|}\ \sigma, \\
  \Phi_{\regmono \leftarrow \hom}(\tau) :&= \sum_\sigma \frac{\lrv{ \Epi(\tau,\sigma) }}{|\Aut(\sigma)|}\ \sigma, \\
   \Phi_{\hom^{\prime} \leftarrow \hom}(\tau) &=  \Phi_{\mono^{\prime} \leftarrow \mono}(\tau) =  \Phi_{\regmono^{\prime} \leftarrow \regmono}(\tau) := |\Aut{(\tau)}| \tau.
  \end{align}
\end{definition}

\begin{example}
\label{exampleTranscounting}
\begin{align*}
  \Phi_{\regmono \leftarrow \mono}(\cherry) &= \cherry + \tria\\ 
    \Phi_{\hom \leftarrow \mono}(\cherry) &= \edge + \cherry\\ 
      \Phi_{\regmono \leftarrow \hom}(\cherry) &= \edge + \cherry + \tria\\ 
    \end{align*}
\end{example}

\begin{remark}
\label{thm:rewriteViEr}
For all these maps it holds that connected graphs are mapped to connected graphs. 
 \end{remark}

These maps allow to translate between the different counting operations.

\begin{theorem}
\label{thm:trans_counting}
For $\tau, \Lambda \in \FinGraph$, it holds that
\begin{align}
\label{eq:viei}
\Big\langle  \GCmono( \Lambda ), \tau \Big\rangle &= \Big\langle  \GCregmono( \Lambda ), \Phi_{\regmono \leftarrow \mono}(\tau) \Big\rangle, \\
  \label{eq:eihomdp}
  \Big\langle  \GChom( \Lambda ), \tau \Big\rangle &= \Big\langle  \GCmono( \Lambda ), \Phi_{\mono \leftarrow \hom}(\tau) \Big\rangle, \\ 
    \label{eq:vihomdp}
  \Big\langle  \GChom( \Lambda ), \tau \Big\rangle &= \Big\langle  \GCregmono( \Lambda ), \Phi_{\regmono \leftarrow \mono}\circ \Phi_{\mono \leftarrow \hom}(\tau) \Big\rangle,
  \\ \label{eq:hom_homdp}
  \Big\langle  \GChom( \Lambda ), \tau \Big\rangle &= \Big\langle  \GChomprime( \Lambda ), \Phi_{\hom^{\prime}\leftarrow \hom}(\tau) \Big\rangle,
    \\ \label{eq:monei}
  \Big\langle  \GCmono( \Lambda ), \tau \Big\rangle &= \Big\langle  \GCmonoprime( \Lambda ), \Phi_{\mono^{\prime}\leftarrow \mono }(\tau) \Big\rangle,
    \\ \label{eq:regmovi}\Big\langle  \GCregmono( \Lambda ), \tau \Big\rangle &= \Big\langle  \GCregmonoprime( \Lambda ), \Phi_{\regmono^{\prime}\leftarrow \regmono }(\tau) \Big\rangle.
\end{align}
\end{theorem}
\begin{proof}
  Using \Cref{cor:factorization}.i we have
  \begin{align*}
    \Big\langle  \GCmono( \Lambda ), \tau \Big\rangle
    &=\lrv{ \Mono(\tau,\Lambda) } 
     \\
    &=  \sum_\sigma \lrv{\lrb{\phi \in \Mono(\tau,\Lambda) \mid \sigma \cong \Lambda_{ \phi(V(\tau)) }}} \\
    &=\sum_\sigma 
    \lrv{\Epi(\tau,\sigma) \cap \Mono(\tau,\sigma)}
    \frac
    {\lrv{\RegMono(\sigma,\Lambda)}}
    { \lrv{\Aut(\sigma)} } \\
    &=
    \Big\langle  \GCregmono( \Lambda ), \Phi_{\regmono \leftarrow \mono}(\tau) \Big\rangle.
  \end{align*}
  This proves \eqref{eq:viei}.

  \bigskip
  Using \Cref{cor:factorization}.iv we have
  \begin{align*}
    \Big\langle  \GChom( \Lambda ), \tau \Big\rangle
    &=\lrv{ \Hom(\tau,\Lambda) } \\
    &=\lrv{\lrb{\phi \in \Hom(\tau,\Lambda) \mid \sigma \cong \Lambda\evaluatedAt{\phi(V(\tau)),\phi(E(\tau))}} } \\
    &= \sum_\sigma \frac{1}{\lrv{\Aut(\sigma)}} \lrv{\RegEpi(\tau,\sigma)}\ \lrv{\Mono(\sigma,\Lambda)} \\
    &= \sum_\sigma  \lrv{\RegEpi(\tau,\sigma)}\ \frac{\lrv{\Mono(\sigma,\Lambda)}}{\lrv{\Aut(\sigma)}} \\
    &=
    \Big\langle  \GCmono( \Lambda ), \Phi_{\mono\leftarrow \hom} (\tau) \Big\rangle.
  \end{align*}
  This proves \eqref{eq:eihomdp}, and  \eqref{eq:vihomdp} follows by composition.  \eqref{eq:hom_homdp}, \eqref{eq:monei} and \eqref{eq:regmovi} are obvious.
\end{proof}

\begin{remark}
Using \Cref{cor:factorization}.ii
\begin{align*}
\Phi_{\regmono \leftarrow \mono}\circ \Phi_{\mono \leftarrow \hom}&= \sum_{\sigma,\gamma}\frac{1}{|\Aut(\sigma)||\Aut(\gamma)|}|\RegEpi(\tau,\sigma)||\Mono(\sigma,\gamma)\cap\Epi(\sigma,\gamma)|\gamma\\
&= \sum_{\gamma}\frac{1}{|\Aut(\gamma)|}|\Epi(\tau,\gamma)|\gamma = \Phi_{\regmono \leftarrow \hom}
\end{align*}
\end{remark}

\begin{remark}~
  These results are already known in the literature for enumerative combinatorics. 
  \begin{enumerate}
    \item A result traditionally attributed to Lov\'asz, which can be read in \cite[Equation (8)]{curticapean2017homomorphisms}, is the same as 
    \begin{align*}
     \Big\langle  \GChom( \Lambda ), \tau \Big\rangle = \Big\langle  \GCmono( \Lambda ), \Phi_{\mono \leftarrow \hom}(\tau) \Big\rangle.
    \end{align*} In \cite{curticapean2017homomorphisms}, $\operatorname{Spasm}(\tau)$ denotes the set of graphs $\sigma \in \FinGraph$ such that
    $\RegEpi(\tau,\sigma) \neq \emptyset$.
    \item By \eqref{eq:viei} one has
      \begin{align*}
        |\Mono(\tau,\Lambda)|
        &= \langle \GCmono(\Lambda), \tau \rangle \\
        &= \langle \GCregmono(\Lambda), \sum_\sigma \lrv{ \Mono(\tau,\sigma) \cap \Epi(\tau,\sigma) }\ \sigma \rangle \\
        &= \sum_\sigma \lrv{ \Mono(\tau,\sigma) \cap \Epi(\tau,\sigma) } \frac{\lrv{ \RegMono(\sigma,\Lambda)}}{|\Aut(\sigma)|} ,
      \end{align*}
      which can be shown to be equal to \cite[Equation (5.15)]{lovasz2012large}, but the formulation is slightly different. In \cite[Equation (5.15)]{lovasz2012large}, it is stated that $\forall \sigma,\Upsilon \in \FinGraph$, and $|V(\sigma)|=n$,
      \begin{align*}
          |\Mono(\sigma,\Upsilon)| = \sum_{
E(\sigma) \subseteq X \subseteq E(K)
          }|\RegMono(K_{V(\sigma),X},\Upsilon)|
      \end{align*}
      where $K$ denotes the complete graph on the vertices of $\sigma$, and $K_{V(\sigma),X}:=(V(\sigma),X)$. To obtain the counting of the regular monomorphisms in terms of the monomorpshims, M\"{o}bius inversion can be used: see  \cite[Equation (5.17)]{lovasz2012large}.
      
      \item Recreating \cite[Eq. 5.16]{lovasz2012large}, we obtain
      \begin{align}
      \label{eq:hom-inj-lov}
      \langle\GChom(\Lambda), \tau\rangle &= \sum_{\pi \in P_{V(\tau)}}|\Mono(\tau/\pi,\Lambda)|
      \end{align}
      where $\pi$ runs over the set partitions of $V(\tau)$ and $\tau/\pi$ is the simple graph obtained by identifying the vertices in each block. Note that for $\tau/\pi \cong \sigma$, it holds that
      \begin{align*}
       \frac{|\RegEpi(\tau,\sigma)|}{|\Aut(\sigma)|} &= |\{\pi \in P_{V(\tau)}: \tau/\pi \cong \sigma\}|
      \end{align*}
      and therefore
       \begin{align*}
      |\Hom(\tau, \Lambda)| &= \sum_{\pi \in P_{V(\tau)}}|\Mono(\tau/\pi,\Lambda)|\\ &= \sum_{\sigma} |\{\pi:\, \tau/\pi \cong \sigma\}||\Mono(\sigma,\Lambda)| \\&= \sum_{\sigma} \frac{|\RegEpi(\tau,\sigma)|}{|\Aut(\sigma)|}|\Mono(\sigma,\Lambda)|\\
      &=  \Big\langle  \GCmono( \Lambda ), \Phi_{\mono \leftarrow \hom} \tau \Big\rangle.
      \end{align*}

M\"{o}bius inversion also plays a role since \cref{eq:hom-inj-lov} can be written more explicitly in the language of incidence algebra as follows:
\begin{align*}
g(\hat{0},\hat{1}) &= (\zeta * f )(\hat{0},\hat{1})= \sum_{\hat{0} \le \pi \le \hat{1}}\zeta(\hat{0},\pi)f(\pi,\hat{1})\\
\end{align*}
where $\pi$ ranges over the lattice of partitions of $V(\tau)$ and $\hat{0}$ and $\hat{1}$ are the finest and the coarsest partitions respectively. Let $g(\pi,\hat{1}):=|\Hom(\tau/\pi,\Lambda)|$ and $f(\pi,\hat{1}):=|\Mono(\tau/\pi,\Lambda)|$. Moreover $g(\cdot,\pi^{\prime})=0$ and $f(\cdot,\pi^{\prime})=0$ as well if $\pi^{\prime} \neq \hat{1}$ . We can then recover $f$ using the M\"{o}bius function:

\begin{align*}
f(\hat{0},\hat{1}) &= (\mu * g )(\hat{0},\hat{1})= \sum_{\hat{0} \le \pi \le \hat{1}}\mu(\hat{0},\pi)g(\pi,\hat{1})
\end{align*}

which is

\begin{align*}
|\Mono(\tau,\Lambda)|   &= \sum_{\pi \in P_{V(\tau)}}\mu(\hat{0},\pi)\langle\GChom(\Lambda), \tau/\pi\rangle
\end{align*}

which is \cite[Eq. 5.18]{lovasz2012large}. An explicit formula for the M\"{o}bius function is given in  \cite[Eq. A.2]{lovasz2012large}:
\begin{align*}
    \mu(\hat{0},\pi) = (-1)^{|V(\tau)|-|\pi|}\prod_{B \in \pi}(|B|-1)!
\end{align*}

\end{enumerate}

\end{remark}

These translating maps behave well with respect to the underlying Hopf structure. 

\begin{theorem}
\label{thm:combiIso}

The maps of \cref{thm:trans_counting}

\begin{enumerate}
\item $\Phi_{\regmono \leftarrow \mono}: (\cdot_{\mono},\Delta_{\hom^{\prime}}) \to (\cdot_{\regmono},\Delta_{\hom^{\prime}})$
\item $\Phi_{\mono \leftarrow \hom}: (\cdot_{\hom},\Delta_{\hom^{\prime}}) \to (\cdot_{\mono},\Delta_{\hom^{\prime}})$
\item $\Phi_{\regmono \leftarrow \hom}: (\cdot_{\hom},\Delta_{\hom^{\prime}}) \to (\cdot_{\regmono},\Delta_{\hom^{\prime}})$
\item $\Phi_{\hom^{\prime} \leftarrow \hom}: (\cdot_{\hom},\Delta_{\hom^{\prime}}) \to (\cdot_{\hom^{\prime}},\Delta_{\hom}),$
\item $\Phi_{\mono^{\prime} \leftarrow \mono}: (\cdot_{\mono},\Delta_{\hom^{\prime}}) \to (\cdot_{\mono^{\prime}},\Delta_{\hom})$
\item $\Phi_{\regmono^{\prime} \leftarrow \regmono}: (\cdot_{\regmono},\Delta_{\hom^{\prime}}) \to (\cdot_{\regmono^{\prime}},\Delta_{\hom})$
\end{enumerate}
are Hopf algebra isomorphisms.
\end{theorem}
\begin{remark}
We can summarize the isomorphisms in the following picture: 
\begin{center}
\begin{tikzcd}
(\cdot_{\hom},\Delta_{\hom^{\prime}})  \arrow[dddd,"\Phi_{\hom \leftarrow \hom^{\prime}}",shift left=1.5ex,leftarrow]\arrow[dddd,"\Phi_{\hom^{\prime} \leftarrow \hom}"',shift right=1.5ex]   \arrow[rrrr,"\Phi_{\mono \leftarrow \hom}",shift left =1.5ex] \arrow[rrrr,"\Phi_{\hom \leftarrow \mono}"', shift right =1.5ex,leftarrow]
&&&& (\cdot_{\mono},\Delta_{\hom^{\prime}})  \arrow[dddd,"\Phi_{\mono \leftarrow \mono^{\prime}}",shift left=1.5ex,leftarrow]\arrow[dddd,"\Phi_{\mono^{\prime}\leftarrow \mono}"',shift right=1.5ex]  \arrow[rrrr,"\Phi_{\regmono \leftarrow \mono}",shift left =1.5ex] \arrow[rrrr,"\Phi_{\mono \leftarrow \regmono}"', shift right =1.5ex,leftarrow]    &&&& (\cdot_{\regmono},\Delta_{\hom^{\prime}}) \arrow[dddd,"\Phi_{\regmono \leftarrow \regmono^{\prime}}",shift left=1.5ex,leftarrow]\arrow[dddd,"\Phi_{\regmono^{\prime} \leftarrow \regmono}"',shift right=1.5ex]    \\&&&&\\&&&&\\&&&&\\
(\cdot_{\hom^{\prime}},\Delta_{\hom})  \arrow[rrrr,"\Phi_{\hom^{\prime} \leftarrow \mono^{\prime}}",shift left =1.5ex,leftarrow] \arrow[rrrr,"\Phi_{\mono^{\prime} \leftarrow \hom^{\prime}}"', shift right =1.5ex] 
&&&& (\cdot_{\mono^{\prime}},\Delta_{\hom})   \arrow[rrrr,"\Phi_{\mono^{\prime} \leftarrow \regmono^{\prime}}",shift left =1.5ex,leftarrow] \arrow[rrrr,"\Phi_{\regmono^{\prime} \leftarrow \mono^{\prime}}"', shift right =1.5ex] &&&&(\cdot_{\regmono^{\prime}},\Delta_{\hom}) 
\end{tikzcd}
\end{center}
where we add all the inverse maps and
where $\Phi_{\mono^{\prime} \leftarrow \hom^{\prime}}:=\Phi_{\mono^{\prime} \leftarrow \mono}  \circ \Phi_{\mono \leftarrow \hom} \circ\Phi_{\hom \leftarrow \hom^{\prime}}$ and $\Phi_{\regmono^{\prime} \leftarrow \mono^{\prime}}:=\Phi_{\regmono^{\prime} \leftarrow \regmono}\circ \Phi_{\regmono \leftarrow \mono}\circ\Phi_{\mono \leftarrow \mono^{\prime}}$. Compare:
\begin{align}
 \label{eq:trans_regmonoprimehomprime}
 \Phi_{\regmono^{\prime} \leftarrow \mono^{\prime}}(\tau) 
&= \frac{1}{|\Aut(\tau)|}\sum_\sigma\ \lrv{ \Mono(\tau,\sigma) \cap \Epi(\tau,\sigma) }  \sigma,\\
\label{eq:trans_monoprimehomprime}
      \Phi_{\mono^{\prime} \leftarrow \hom^{\prime}}(\tau)
  &= \frac{1}{|\Aut(\tau)|}\sum_\sigma \lrv{ \RegEpi(\tau,\sigma) }\ \sigma, 
 \end{align}
with \cref{eq:trans_regmono_mono} and \cref{eq:trans_mono_hom} respectively. The vertical arrows (pointing in the direction) coincide as linear maps.

\end{remark}

\begin{remark}
    Note that $\Phi_{\hom^{\prime} \leftarrow \hom},\Phi_{\mono \leftarrow \hom}, \Phi_{\mono^{\prime} \leftarrow \hom}:=\Phi_{\mono^{\prime} \leftarrow \mono} \circ \Phi_{\mono \leftarrow \hom}, \Phi_{\regmono\leftarrow \hom}$ and $\Phi_{\regmono^{\prime} \leftarrow \hom}:=\Phi_{\regmono^{\prime} \leftarrow \regmono} \circ \Phi_{\regmono\leftarrow \hom}$ are \emph{not} the isomorphisms listed in \cref{thm:AllIsomorphisms}. As an example:
    \begin{align*}
        \widehat{\psi}_{(\cdot_{\hom^{\prime}},\Delta_{\hom})}(\sigma \cdot_{\hom} \tau) = \sigma \cdot_{\hom^{\prime}} \tau = \frac{|\Aut(\sigma \cdot_{\hom} \tau)|}{|\Aut(\sigma)||\Aut(\tau)|}\; \sigma \cdot_{\hom} \tau  \neq |\Aut(\sigma \cdot_{\hom} \tau)| \sigma \cdot_{\hom} \tau = \Phi_{\hom^{\prime} \leftarrow \hom}(\sigma \cdot_{\hom} \tau)
    \end{align*}
\end{remark}

\begin{proof}
We want to use \cref{thm:PrimitiveIsEnough}, and divide the proof into three parts.  
\begin{enumerate}
    \item  The functions are proved to be algebra homomorphisms. 
    
    \item We verify that they are linear bijections when restricted and corestricted to $\langle\FinGraph^0\rangle_{\Q}$.
    
    \item 
    From \cref{thm:AllIsomorphisms}, we know that the algebras are commutative free on the primitive elements. Hence all algebra homomorphisms are natural extensions~(\cref{def:natural_extension}) of linear functions defined on the generators (the primitive elements with respect to either $\Delta_{\hom}$ or $\Delta_{\hom^{\prime}}$, i.e. the connected graphs).
    By \cref{thm:PrimitiveIsEnough}, these functions are then Hopf algebra isomorphisms.
\end{enumerate}

To show i., let $\tau_{1},\tau_{2} \in \LFinGraph$, and write
\begin{align*}
  \langle {\GCregmono}(\Lambda),\Phi_{\regmono \leftarrow \mono}(\tau_{1} \cdot_{\mono} \tau_{2})\rangle &=
  \langle \GCmono(\Lambda),\tau_{1} \cdot_{\mono} \tau_{2}\rangle \\ &=
  \langle \GCmono(\Lambda),\tau_{1}\rangle\langle \GCmono(\Lambda),\tau_{2}\rangle  \\ &=\langle{\GCregmono}(\Lambda),\Phi_{\regmono \leftarrow \mono}(\tau_{1})\rangle\langle{\GCregmono}(\Lambda),\Phi_{\regmono \leftarrow \mono}(\tau_{2})\rangle  \\ &=
  \langle{\GCregmono}(\Lambda),\Phi_{\regmono \leftarrow \mono}(\tau_{1}) \cdot_{\regmono^{\prime}} \Phi_{\regmono \leftarrow \mono}(\tau_{2})\rangle.
\end{align*}

Since $\GCregmono:\LFinGraph^{\circ} \to \LFinGraph^{\circ}$ is a linear automorphism, see \cref{lemma:GC_linend}, we can conclude that $\forall \tau_{1},\tau_{2}\in \LFinGraph$
\begin{align*}
    \Phi_{\regmono \leftarrow \mono}(\tau_{1} \cdot_{\mono} \tau_{2}) &=  \Phi_{\regmono \leftarrow \mono}(\tau_{1}) \cdot_{\regmono^{\prime}}\Phi_{\regmono\leftarrow \mono}(\tau_{2})
\end{align*}
For $\Phi_{\regmono \leftarrow \hom}$, the argument is analogous. For $\Phi_{\mono \leftarrow \hom}$ we use that $\GCmono$ is also a linear automorphism, see \cref{lemma:GC_linend}. For $\Phi_{\hom^{\prime} \leftarrow \hom}$, $\Phi_{\mono^{\prime} \leftarrow \mono}$, $\Phi_{\regmono^{\prime} \leftarrow \regmono}$ the proof is obvious.

Recall that $$\mathtt{Prim}(\LFinGraph,\Delta_{\hom^{\prime}})=\langle\FinGraph^0\rangle_{\Q}=\mathtt{Prim}(\LFinGraph,\Delta_{\hom})$$ as proved in~\cref{lemma:primitive_elements}. Clearly
\begin{align*}
  \Phi_{\regmono \leftarrow \mono}(\tau) 
    :&= \sum_\sigma\ \frac{\lrv{ \Mono(\tau,\sigma) \cap \Epi(\tau,\sigma) }}{|\Aut(\sigma)|} \ \sigma, \\
  \Phi_{\mono \leftarrow \hom}(\tau)
  :&= \sum_\sigma \frac{\lrv{ \RegEpi(\tau,\sigma) }}{|\Aut(\sigma)|}\ \sigma, \\
 \end{align*}
map connected graphs to connected graphs, and the same holds for $\Phi_{\regmono \leftarrow \hom}$. If we order the graphs by the number of vertices, and then solve ties arbitrarily, the representation matrix associated to $\Phi_{\mono \leftarrow \hom}$ is upper triangular, with diagonal entries all different from zero. If we order the graphs by the number of vertices, and then for each fixed positive integer $n$ order the graphs with $n$ vertices by the number of edges and then solve ties arbitrarily, the representation matrix associated to $\Phi_{\regmono \leftarrow \mono}$ is lower triangular, with diagonal entries all different from zero. We then use \cref{thm:PrimitiveIsEnough} and we are done.  It follows that by composition that $\Phi_{\regmono \leftarrow \hom}$ is also a Hopf algebra isomorphisms. For the other maps one can verify immediately that they are Hopf isomorphisms.
\end{proof}

\section{Conclusions and outlook}
\label{sec:conclusion}

This work explored different definitions of subgraph counting
(related to edge-restriction subgraphs, to vertex-induction, and to homomorphisms)
from the point of view of combinatorial Hopf algebra. These perspectives are equivalent in the sense that the corresponding Hopf algebras are isomorphic, where the isomorphy
can be chosen to respect the counting operations.
The counting functions, parameterized by the counted-in graphs, are interpreted as 
signature-type objects (a concept from rough path theory).

As observed in \cref{remark:connectedcounting}, counting occurrences can be expressed as polynomials of the counting occurrences on connected graphs alone. This result can be traced back to the classical articles of Whitney \cite{whitney1932coloring} and Biggs \cite{biggs1978cluster}. However, we remark that these two articles express the counting of \textit{decomposable} graphs in terms of \textit{indecomposable} graphs. For instance, the following relation
$$
    c_{\!\!\tria\hspace{-0.2cm} \edge}(\Lambda) = c_{\!\!\tria}\!\!(\Lambda)( c_{\edge}(\Lambda) -3).
$$
appears in \cite{biggs1978cluster}, which cannot be obtained from the product treated in the current work. While the concepts of indecomposable and connected graphs are tightly related, the precise algebraic relations remain to be explored in further work.

We also describe Chen's identity -- the comultiplicative property of graph counting. It permits the simplification of computations for disconnected graphs in terms of their connected components. However, we remark that it does \emph{not} decompose connected graphs into smaller objects. This is deemed important in practice since graph counting problems usually appear on large connected sample graphs. It is an open problem to find a more practical type of Chen's identity.

\printbibliography

\appendix

\section{Appendix}
\label{sec:appendix}

\subsection{Background on Hopf algebras}
\label{ssec:back_Hopf}

\subsubsection{Hopf algebras}
\label{sssec:Hopf}

This section is based on \cite{grinberg2020hopf} and \cite{bib:manchon2008hopf}. Here we give the definition of Hopf algebra. We start by recalling the basic notions necessary for their definition.

\subsubsection{Algebras and coalgebras}
\label{ssec:AlgCoalg}

Let $\ka$ be a field (of characteristic zero). All vector spaces considered here are $\ka$-vector spaces and all linear maps are $\ka$-linear maps. We now give the definition of (associative) algebra.

\begin{definition}[Algebra]
A $\ka$-\textbf{algebra} $A$ is a vector space equipped with a  \textbf{product} $m~:~A~ \otimes~A~\to~A$ and a linear functional $u:\ka \to A$ called \textbf{unit} for the which two conditions hold.
\allowdisplaybreaks
\begin{align*}
   &m \circ (m \otimes \id) =    m \circ (\id \otimes m)\\
   & m \circ (\id \otimes u) \circ l = \id =  m \circ (u \otimes \id) \circ l^{\prime} .
\end{align*}
The first condition says that the product is \textbf{associative}, while the second means that there is a unit element in $A$. Here $l:A \to A \otimes \ka$ and $l^{\prime}: A \to \ka \otimes A,$ are isomorphisms sending $a \mapsto a \otimes 1_{\ka}$ and $a \mapsto 1_{\ka} \otimes a$, respectively. Note that in the following we will write $ab=a \cdot_{\!\!\scriptscriptstyle{A}} b :=m(a \otimes b)$. The unit map is simply $u(1_{\ka}) = 1_{A}$.
\end{definition}

The definition of coalgebra can be seen as the dual notion of algebra. 

\begin{definition}[Coalgebra]
A $\ka$-\textbf{coalgebra} $C$ is a vector space equipped with two linear maps: a \textbf{coproduct} $\Delta:C \to C \otimes C$ and a \textbf{counit} $\varepsilon: C \to \ka$ such that the following holds:
\allowdisplaybreaks
\begin{align}
   & (\Delta \otimes \id) \circ \Delta = (\id \otimes \Delta) \circ \Delta \label{coalg1}\\
   & t \circ (\id \otimes \varepsilon) \circ \Delta = \id = t^{\prime} \circ (\varepsilon \otimes \id) \circ \Delta \label{coalg2}.
\end{align}
The first condition is called \textbf{coassociativity}. Here $t:C \otimes \ka \to C$, $t^{\prime}: \ka \otimes C \to C,$ are isomorphisms mapping $c \otimes 1_{\ka} \mapsto c$ respectively $1_{\ka} \otimes c \mapsto c$. Note that for a given coproduct $\Delta$ there is at most one map $\varepsilon$ such that the previous condition holds.
\end{definition}

\begin{definition}[Commutative, Cocommutative]
We say that an algebra $A$ is commutative if $m_A = m_A \circ \tau$, where $\tau: A \otimes A \to A \otimes A,\tau(a_{1} \otimes a_{2})=a_{2} \otimes a_{1}$. A coalgebra $C$ is cocommutative if $\Delta_C = \tau \circ \Delta_C$.
\end{definition}

For the coproduct, we will use \textbf{Sweedler's notation}.
\begin{align*}
    \Delta(c) = \sum_{(c)}c_{(1)} \otimes c_{(2)} = \sum c_{(1)} \otimes c_{(2)}.
\end{align*}
We now introduce homomorphisms for algebras and coalgebras. 

\begin{definition} A linear map $\varphi:(A,m_A)\to (B,m_B)$ is an \textbf{algebra homomorphism} or an \textbf{algebra map} whenever
\begin{align*}
    \varphi \circ m_A = m_B \circ ( \varphi \otimes \varphi).
\end{align*}
Additionally, $\varphi\circ u_A = u_B $ for unital associative algebras. A \textbf{coalgebra map} or \textbf{coalgebra homomorphism} is a linear function $\psi:(C,\Delta_C) \to (D,\Delta_D)$ which preserves the coalgebra structure.
\begin{align*}
    (\psi\otimes\psi)\circ \Delta_C = \Delta_D \circ \psi. 
\end{align*}
If the coalgebra is counital, then $ \varepsilon_D \circ \psi = \varepsilon_C.$ 
\end{definition}

Isomorphisms of algebras and coalgebras are bijective homomorphisms such that the inverses are also morphisms. We have the following result.

\begin{lemma}
\label{thm.invertibleHomomorphisms}
Let $(A_{1},m_{1}),(A_{2},m_{2})$ be algebras and let $\phi:A_{1} \to A_{2}$ be a homomorphism. In case $\phi$ admits a compositional inverse, $\phi$ is an isomorphism. Moreover, let $(C_{1},\Delta_{1})$, $(C_{2},\Delta_{2})$ be coalgebras and let $\psi:C_{1} \to C_{2}$ be a homomorphism. In case $\psi$ admits a compositional inverse, $\psi$ is an isomorphism. 
\end{lemma}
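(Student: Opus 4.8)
The plan is to show, in each half of the statement, that the (two-sided, linear) compositional inverse is automatically a morphism of the relevant structure. Since $\phi$ (resp.\ $\psi$) is already bijective by hypothesis, verifying that its inverse is again a homomorphism is exactly what it means for the map to be an isomorphism, so this is all that remains.

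The single observation driving both parts is that tensoring is functorial with respect to inverses: if a linear map $f$ is invertible, then $f\otimes f$ is invertible with $(f\otimes f)^{-1}=f^{-1}\otimes f^{-1}$. Indeed
\begin{align*}
(f\otimes f)\circ(f^{-1}\otimes f^{-1}) = (f\circ f^{-1})\otimes(f\circ f^{-1}) = \id\otimes\id,
\end{align*}
and symmetrically on the other side. I would record this identity first, since it is used in both cases.

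For the algebra part I would start from the homomorphism relation $\phi\circ m_1 = m_2\circ(\phi\otimes\phi)$ and conjugate by the inverse. Composing on the left with $\phi^{-1}$ and on the right with $\phi^{-1}\otimes\phi^{-1}$ gives
\begin{align*}
m_1\circ(\phi^{-1}\otimes\phi^{-1})
= \phi^{-1}\circ m_2\circ(\phi\otimes\phi)\circ(\phi^{-1}\otimes\phi^{-1})
= \phi^{-1}\circ m_2,
\end{align*}
where I used $\phi^{-1}\circ\phi=\id$ on the left and the tensor-inverse identity on the right. This is precisely the assertion that $\phi^{-1}$ is an algebra homomorphism; for the unit, applying $\phi^{-1}$ to $\phi\circ u_1 = u_2$ yields $u_1 = \phi^{-1}\circ u_2$, as required.

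The coalgebra part is the formal dual, obtained by reversing the roles of products and coproducts. Starting from $(\psi\otimes\psi)\circ\Delta_1 = \Delta_2\circ\psi$, I would precompose with $\psi^{-1}$ and then left-compose with $\psi^{-1}\otimes\psi^{-1}$ to get
\begin{align*}
(\psi^{-1}\otimes\psi^{-1})\circ\Delta_2
= (\psi^{-1}\otimes\psi^{-1})\circ(\psi\otimes\psi)\circ\Delta_1\circ\psi^{-1}
= \Delta_1\circ\psi^{-1},
\end{align*}
so $\psi^{-1}$ is a coalgebra homomorphism, and the counit condition $\varepsilon_2 = \varepsilon_1\circ\psi^{-1}$ follows by postcomposing $\varepsilon_2\circ\psi=\varepsilon_1$ with $\psi^{-1}$. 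I do not anticipate a genuine obstacle here: once the tensor-inverse identity is in place, both statements reduce to conjugating the defining intertwining relation by the inverse map. The only mildly error-prone point is bookkeeping—keeping track of which side each factor is composed on—but no real difficulty arises.
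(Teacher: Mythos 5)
Your proof is correct and takes essentially the same route as the paper's: both arguments conjugate the intertwining relations $\phi\circ m_1 = m_2\circ(\phi\otimes\phi)$ and $(\psi\otimes\psi)\circ\Delta_1=\Delta_2\circ\psi$ by the inverse map on each side, using the identity $(f\otimes f)^{-1}=f^{-1}\otimes f^{-1}$. Your explicit verification of the unit and counit conditions is a small extra detail that the paper leaves implicit, not a different method.
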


\begin{proof}
    The definition of algebra homomorphism implies that $\phi \circ m_1 = m_2 \circ (\phi\otimes\phi).$
    Composing by $\phi^{-1}$ on the left yields
        $m_1 = \phi^{-1}\circ \phi \circ m_1 = \phi^{-1} \circ m_2 \circ (\phi\otimes\phi).$
    The result follows from composing the last equation with $\phi^{-1}\otimes\phi^{-1}$ on the right 
    \begin{align*}
        m_1 \circ (\phi^{-1}\otimes\phi^{-1}) = \phi^{-1}\circ m_2.
    \end{align*}
    For coalgebras,
    \allowdisplaybreaks
    \begin{align*}
        \Delta_1 = (\psi^{-1}\otimes\psi^{-1}) \circ (\psi\otimes \psi)\circ 
        \Delta_1 = (\psi^{-1}\otimes\psi^{-1}) \circ \Delta_2 \circ \psi,
    \end{align*}
    and the result follows once we compose on the right with $\psi^{-1}$.
\end{proof}

\subsubsection{Bialgebras}
\label{ssec:bialg}

\begin{definition}[Tensor product of two algebras] Given two coalgebras $A$, $B$, the tensor product $A \otimes B$ becomes a $\ka$-algebra with multiplication defined as follows:
\allowdisplaybreaks
\begin{align*}
    &m_{A \otimes B}: (A \otimes B) \otimes (A \otimes B) \to A\otimes B\\
    &m((a \otimes b)\otimes (a^{\prime} \otimes b^{\prime})) := a a^{\prime} \otimes b b^{\prime}
\end{align*}
and unit map:
\allowdisplaybreaks
\begin{align*}
    & u_{A \otimes B}: \ka \to A \otimes B\\
    &u_{A \otimes B}(1_{k}) = 1_{A} \otimes 1_{B}. 
\end{align*}
\end{definition}
We are now ready to define bialgebras.

\begin{definition}[Bialgebra] Let $A$ be both a $\ka$-algebra and a $\ka$-coalgebra. We say that $A$ is a bialgebra if for all $x,y \in A$:
\begin{align*}
    & \Delta(xy) = \Delta(x)\Delta(y), \\
\end{align*}
i.e.~the coproduct $\Delta:A \to A \otimes A$ is an algebra morphism, and 
\begin{align*}
    & \varepsilon(xy) = \varepsilon(x)\varepsilon(y),
\end{align*}
i.e.~$\varepsilon:A \to \ka$ is an algebra morphism.
\end{definition}

\begin{definition}[Bialgebra morphism] Let $A, B$ be $\ka$-bialgebras. Let $\varphi:A \to B$ be a $\ka$-linear map that is an algebra as well as a coalgebra homomorphism. Then we say that $\varphi$ is a bialgebra homomorphism.
\end{definition}

\begin{definition} Let $x$ be an element in the coalgebra $C$. We say that $x$ is group-like if $\Delta(x)= x \otimes x$ and $\varepsilon(x)=1$. 
\end{definition}

\begin{definition}
Let $A$ be a bialgebra, we say that $x \in A$ is primitive if $\Delta(x)= 1_{A}\otimes x + x \otimes 1_{A}$.
\end{definition}

\begin{remark}
    Note that zero is not group-like. In fact, zero is primitive, $\Delta(0) = 0 =  1_{A} \otimes 0 + 0 \otimes 1_{A}$, as the set of primitive elements forms a subspace.
\end{remark}

\begin{definition}[Ideal, Coideal] 
i) $J$ is a two-sided \textbf{ideal} of an algebra $A$, if it is a subspace of $A$ such that $m(J \otimes A) \subset A$ and $m(A \otimes J) \subset A$. The quotient $A/J$ is again an algebra.\\
ii) $J$ is a two-sided \textbf{coideal} of a coalgebra $C$, if it is a subspace of $C$ such that $\Delta(J) \subset J\otimes A + A \otimes J$ and $\varepsilon(J)=0$. The quotient $C/J$ is again a coalgebra.\\
iii) In case $J$ is both a two-sided ideal as well as a coideal of a bialgebra $A$, then the quotient $A/J$ is a bialgebra.
\end{definition}

\subsubsection{Hopf algebra}

A Hopf algebra is a bialgebra with a particular linear map called antipode. The latter is defined to be the inverse of $\id$ for the convolution product. Hence, to define a Hopf algebra we need to introduce the concept of convolution algebra.
Let $C$ be a coalgebra and $A$ an algebra. Consider the vector space $\text{Hom}(C,A)$ of linear maps. It becomes a unital algebra by defining the convolution product:
\begin{align*}
    f * g := m_{A} \circ (f \otimes g) \circ \Delta_{C}
\end{align*}
for $f,g \in \text{Hom}(C,A)$. The unit is given by $u_A \circ \varepsilon_C$. Indeed, using \eqref{coalg2}, we find for every $f \in \text{Hom}(C,A)$:
\allowdisplaybreaks
\begin{align*}
    \sum f({c_{(1)}})\varepsilon(c_{(2)}) = f(c) =  \sum\varepsilon(c_{(1)})f(c_{(2)}).
\end{align*}
In particular, we can endow $\text{End}(A)$ with a convolution product when $A$ is a bialgebra.

\begin{definition}[Hopf Algebra]
A \textbf{Hopf algebra} is a bialgebra $(\curlyH,m,u,\Delta,\epsilon)$ with an antipode, i.e., there exists a linear map $S:\curlyH\to\curlyH$ such that
\allowdisplaybreaks
\begin{align}
\label{eq.antipode}
    S * \id =  u \circ \epsilon = \id * S.
\end{align}
\end{definition}

The following statement tells us that if the antipode exists, it is unique. Moreover, it is an algebra anti-homomorphism.

\begin{theorem}
    Let $(\curlyH,m,u,\Delta,\epsilon)$ be a bialgebra.  If $S$ and $S^{\prime}$ are both antipodes, then $S=S^{\prime}$. We have $S(1_\curlyH)=1_\curlyH$. Moreover, the antipode is an anti-homomorphism.
    $$ S\circ m( a\otimes b) = m\circ( S(b) \otimes S(a)).$$
\end{theorem}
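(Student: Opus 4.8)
The plan is to route all three claims through the convolution algebra, in which an antipode is by definition a two-sided inverse of $\id$. Recall that $(\Hom(\curlyH,\curlyH),*)$ is a unital associative algebra with unit $u\circ\epsilon$, and that $S$ is an antipode exactly when $S*\id = u\circ\epsilon = \id*S$. \textbf{Uniqueness} is then the standard fact that inverses in a monoid are unique: if $S$ and $S'$ both satisfy the antipode axiom, associativity of $*$ and the unit property give
\[
  S = S*(u\circ\epsilon) = S*(\id*S') = (S*\id)*S' = (u\circ\epsilon)*S' = S'.
\]

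For \textbf{the unit}, I would use that $1_\curlyH = u(1_\ka)$ is group-like: since $\Delta$ is an algebra morphism, $\Delta(1_\curlyH) = 1_\curlyH\otimes 1_\curlyH$ and $\epsilon(1_\curlyH)=1_\ka$. Evaluating $S*\id = u\circ\epsilon$ at $1_\curlyH$ then yields
\[
  S(1_\curlyH) = m\big(S(1_\curlyH)\otimes 1_\curlyH\big) = (S*\id)(1_\curlyH) = u(\epsilon(1_\curlyH)) = 1_\curlyH .
\]

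The \textbf{anti-homomorphism} property is the main obstacle, and I would handle it by lifting the computation to the larger convolution algebra $\Hom(\curlyH\otimes\curlyH,\curlyH)$, whose product uses the tensor-product coalgebra structure on $\curlyH\otimes\curlyH$ as source and $m$ as target, with unit $u\circ(\epsilon\otimes\epsilon)$. The strategy is to exhibit two one-sided convolution inverses of the element $m\in\Hom(\curlyH\otimes\curlyH,\curlyH)$ and then force them to coincide. Set $N := S\circ m$ and $P := m\circ(S\otimes S)\circ\tau$, so that $P(x\otimes y)=S(y)S(x)$, with $\tau$ the flip. Using that $\Delta$ is an algebra morphism ($\Delta(xy)=\Delta(x)\Delta(y)$), one checks $N$ is a left inverse,
\[
  (N*m)(x\otimes y) = \sum S(x_{(1)}y_{(1)})\,x_{(2)}y_{(2)} = (S*\id)(xy) = \epsilon(x)\epsilon(y)\,1_\curlyH,
\]
and, factoring the independent Sweedler sums over $(x)$ and $(y)$, that $P$ is a right inverse,
\[
  (m*P)(x\otimes y) = \sum x_{(1)}y_{(1)}S(y_{(2)})S(x_{(2)}) = \epsilon(y)\sum x_{(1)}S(x_{(2)}) = \epsilon(x)\epsilon(y)\,1_\curlyH,
\]
where I used $\sum y_{(1)}S(y_{(2)}) = \epsilon(y)1_\curlyH$ and then $\sum x_{(1)}S(x_{(2)}) = \epsilon(x)1_\curlyH$. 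Both expressions equal the unit $u\circ(\epsilon\otimes\epsilon)$ of this convolution algebra, so the uniqueness-of-inverses argument applies once more:
\[
  N = N*(u\circ(\epsilon\otimes\epsilon)) = N*(m*P) = (N*m)*P = (u\circ(\epsilon\otimes\epsilon))*P = P,
\]
which is precisely $S(xy)=S(y)S(x)$.

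The part requiring genuine care is the set-up of the convolution algebra on $\Hom(\curlyH\otimes\curlyH,\curlyH)$ and the Sweedler bookkeeping in the right-inverse computation: one must use that the coproduct on $\curlyH\otimes\curlyH$ factors so that the summations over $(x)$ and $(y)$ are independent, which is exactly what lets the inner sum $\sum y_{(1)}S(y_{(2)})$ collapse to $\epsilon(y)1_\curlyH$ before touching the $x$-factors. Everything else reduces to the two bialgebra compatibilities (multiplicativity of $\Delta$ and of $\epsilon$) together with the abstract fact that inverses in a monoid are unique, which I would state once and reuse for all three parts.
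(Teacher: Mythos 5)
Your proof is correct. For uniqueness and for $S(1_\curlyH)=1_\curlyH$ you argue exactly as the paper does: inverses in the unital convolution algebra are unique, and evaluating $S*\id = u\circ\epsilon$ on the group-like element $1_\curlyH$ gives the second claim. For the anti-homomorphism property the paper gives no argument of its own, deferring to \cite[Proposition 1.4.10]{grinberg2020hopf}; your computation --- exhibiting $N=S\circ m$ as a left convolution inverse and $P=m\circ(S\otimes S)\circ\tau$ as a right convolution inverse of $m$ in $\Hom(\curlyH\otimes\curlyH,\curlyH)$, then invoking uniqueness of two-sided inverses --- is precisely the standard proof found in that reference, so you have supplied the omitted details rather than taken a genuinely different route. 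The bookkeeping is sound: the left-inverse step correctly uses multiplicativity of $\Delta$ and of $\epsilon$ to rewrite $\sum S(x_{(1)}y_{(1)})\,x_{(2)}y_{(2)}$ as $(S*\id)(xy)$, and the right-inverse step correctly exploits that the coproduct of $\curlyH\otimes\curlyH$ keeps the Sweedler sums over $(x)$ and $(y)$ independent, so the inner sum $\sum y_{(1)}S(y_{(2)})=\epsilon(y)1_\curlyH$ collapses first, followed by the $x$-sum, using the axiom $\id*S=u\circ\epsilon$.
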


\begin{proof}
Uniqueness is guaranteed by the fact that inverse elements in a ring are unique. Let $S$ and $S^{\prime}$ be inverses of $\id$ under the convolution product then: $(S -S^{\prime}) * \id = u \circ \epsilon - u \circ \epsilon = 0$. Since $\Delta(1_\curlyH) = 1_\curlyH \otimes 1_\curlyH$,, $S(1_\curlyH)1_\curlyH=1_\curlyH$ means that $S(1_\curlyH)=1_\curlyH$. For the anti-homomorphism property check the proof of \cite[Proposition 1.4.10]{grinberg2020hopf}.
\end{proof}

It is known, as noted in \cite[p. 43]{cartier2021classical}, that given two Hopf Algebras, $\curlyH_{1}$ and $\curlyH_{2}$, any bialgebra homomorphism is automatically a Hopf algebra homomorphism.

\begin{proposition}
    \label{thm:homBiHopf}
    Suppose that $(\curlyH_1,m_1,\Delta_1,S_1)$ and $(\curlyH_2,m_2,\Delta_2,S_2)$ are Hopf algebras. For all bialgebra homomorphisms $\varphi~:~\curlyH_1~\to~\curlyH_2$ , it holds that
    \begin{align}
      \varphi \circ S_{1} &= S_{2} \circ \varphi.
    \end{align}
\end{proposition}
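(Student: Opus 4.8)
The plan is to work inside the convolution algebra $\Hom(\curlyH_1,\curlyH_2)$, whose product is $f * g = m_2 \circ (f \otimes g) \circ \Delta_1$ and whose unit is $u_2 \circ \varepsilon_1$, and to show that $\varphi \circ S_1$ and $S_2 \circ \varphi$ are respectively a \emph{left} and a \emph{right} convolution-inverse of $\varphi$. Since inverses in an associative unital algebra are unique, the two expressions must then coincide. This is precisely the argument used to prove uniqueness of the antipode itself, transplanted from $\End(\curlyH)$ to $\Hom(\curlyH_1,\curlyH_2)$.

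First I would compute $(\varphi \circ S_1) * \varphi$. Using that $\varphi$ is an algebra homomorphism, so $m_2 \circ (\varphi \otimes \varphi) = \varphi \circ m_1$, this convolution rewrites as $\varphi \circ m_1 \circ (S_1 \otimes \id) \circ \Delta_1 = \varphi \circ (S_1 * \id)$. By the antipode axiom \eqref{eq.antipode} for $\curlyH_1$ the inner factor is $u_1 \circ \varepsilon_1$, and the unit-preservation identity $\varphi \circ u_1 = u_2$ turns the whole thing into $u_2 \circ \varepsilon_1$, the convolution unit. Hence $\varphi \circ S_1$ is a left inverse of $\varphi$.

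Symmetrically, I would compute $\varphi * (S_2 \circ \varphi)$, this time invoking that $\varphi$ is a coalgebra homomorphism so that $(\varphi \otimes \varphi) \circ \Delta_1 = \Delta_2 \circ \varphi$. The convolution rewrites as $m_2 \circ (\id \otimes S_2) \circ \Delta_2 \circ \varphi = (\id * S_2) \circ \varphi = u_2 \circ \varepsilon_2 \circ \varphi$, and the counit-preservation identity $\varepsilon_2 \circ \varphi = \varepsilon_1$ again yields the convolution unit $u_2 \circ \varepsilon_1$. Thus $S_2 \circ \varphi$ is a right inverse of $\varphi$. The standard one-line closure $\varphi \circ S_1 = (\varphi \circ S_1) * \bigl(\varphi * (S_2 \circ \varphi)\bigr) = \bigl((\varphi \circ S_1) * \varphi\bigr) * (S_2 \circ \varphi) = S_2 \circ \varphi$ then finishes the proof, using associativity of $*$ together with the two unit computations.

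The steps are all routine diagram-chasing; the only thing to be careful about is bookkeeping — making sure each of the four structural properties of a bialgebra homomorphism (that $\varphi$ respects $m$, $\Delta$, $u$, and $\varepsilon$) is invoked at the correct place, and that the computed unit $u_2 \circ \varepsilon_1$ really is the identity of $\Hom(\curlyH_1,\curlyH_2)$ rather than of $\End(\curlyH_1)$ or $\End(\curlyH_2)$. No genuine obstacle arises, which is consistent with the result being quoted from \cite{cartier2021classical}.
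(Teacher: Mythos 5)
Your proof is correct and follows essentially the same route as the paper's: both arguments show that $\varphi \circ S_1$ and $S_2 \circ \varphi$ are one-sided convolution inverses of $\varphi$ in $\Hom(\curlyH_1,\curlyH_2)$ (using the algebra-homomorphism property for one and the coalgebra-homomorphism property for the other) and conclude by uniqueness of inverses in an associative unital algebra. The only difference is cosmetic — you exhibit $\varphi\circ S_1$ as a left inverse and $S_2\circ\varphi$ as a right inverse, while the paper does the opposite sides — and your version is, if anything, slightly more careful in spelling out the associativity sandwich and the use of $\varphi\circ u_1 = u_2$ and $\varepsilon_2\circ\varphi=\varepsilon_1$.
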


\begin{proof}
   Notice that both functions $\varphi \circ S_1$ and $S_2 \circ \varphi$ are convolutional inverses of $\phi$. We can first use the multiplicative property of $\varphi$, in order to see it.
   \begin{align*}
     m_2 \circ (\varphi \otimes \varphi \circ S_1) \Delta_1 &= 
       m_2 \circ (\varphi\otimes\varphi) \circ (\id_1 \otimes S) \circ \Delta_2 \\
                                                            &= 
       \varphi \circ m_1 \circ (\id_1 \otimes S_1 ) \circ \Delta_1 \\
                                                            &= \varphi \circ (u_1 \circ \varepsilon_1 ) \\
                                                            &= u_2 \circ \varepsilon_1
       \end{align*}
    This is the same function we obtain by using the comultiplicative property.
       \begin{align*}
       u_2 \circ \varepsilon_1 
       &= (u_2 \circ \varepsilon_2) \circ \varphi \\
       &= m_2 \circ (S_2 \otimes \id_2) \circ \Delta_2 \circ \varphi \\
       &= m_2 \circ (S_2\otimes \id_2) \circ (\varphi\otimes\varphi) \circ \Delta_1.
    \end{align*}
    The conclusion follows from the uniqueness of the inverse in associative unital algebras.
\end{proof}

In addition to the abstract definitions, there are some natural examples of what is a Hopf algebra. Perhaps the most important of them, according to~\cref{thm:cartier}, is the polynomial Hopf algebra. 

\begin{example}
    The \textbf{polynomial} Hopf algebra in one generator $x$ is defined in the space $$
    \textbf{k}[x] := \lrp{  \textbf{k}\lra{1,x,x^2,x^3,\ldots}, m, \Delta}
$$ 
where product and coproduct are    
\allowdisplaybreaks
 \begin{align*}
     m(x^{m} \otimes x^{n}):= x^{m+n}
     \quad \text{resp.} \quad
     \Delta(x^n) :=
    \sum_{k=0}^n \binom{n}{k} x^{n-k} \otimes x^k.
\end{align*}
The counit and antipode are given by
\begin{align*}
\varepsilon(a) = 
    \begin{cases}
1, \;\text{if}\; a =  1\\
0, \;\text{else}\\
\end{cases}
\quad \text{resp.} \quad
    S(x^{n}) = (-1)^{n}x^n.
\end{align*}

    Notice that if $e_a: \textbf{k}[x] \to \textbf{k}$ is the linear extension of the evaluation in $a \in \textbf{k}$: $x^n \mapsto a^n$, then
    \begin{align*}
      m \circ (e_a \otimes e_b) \circ \Delta &= e_{a+b}.
    \end{align*}
    This is due to the binomial theorem.
\end{example}

\subsubsection{Grading, filtration, and connectedness}
\label{ssec:gradedfiltered}

We give the definition of a graded vector space, from which one can define graded algebras, coalgebras, and bialgebras. We will see that in some cases the weaker notion of filtered algebras and coalgebras can be used when these objects are not graded.

\begin{definition}[Graded vector space]
A graded $\ka$-vector space $V$ is a vector space that admits a decomposition as direct sum $V:= \bigoplus_{n \ge 0} V_{n}$ of vector spaces $\{V_n\}_{n \ge 0}$. Note that for $ V \otimes V$, one considers the induced grading given by $V$
\begin{align*}
    (V \otimes V)_{n} := \bigoplus_{i+j = n} V_{i} \otimes V_{j}.
\end{align*}
\end{definition}

It is natural to consider graded linear maps.
\begin{definition}[Graded maps]
   Let $V,W$ be graded vector spaces. Then $\phi:V \to W$ is graded if $\phi(V_{n}) \subset W_{n}$, for all $n \ge 0$.
\end{definition}

Let $V$ be a graded vector space. In case $V_{0} \cong \ka$ we say that $V$ is connected. This is a technical definition: its meaning will become clearer when introducing filtered connected bialgebras. 

\begin{definition}[Graded algebras and coalgebras]
   A graded algebra and coalgebra, $A = \bigoplus_{n \ge 0} A_{n}$ respectively $C= \bigoplus_{n \ge 0} C_{n}$, are graded vector spaces such that for all $n,m \ge 0$
\begin{align*}
    A_{n}A_{m} \subset A_{n+m} 
    \quad \text{resp.} \quad
    \Delta(C_{n}) \subset \bigoplus_{i+j=n} C_{i} \otimes C_{j}.
\end{align*}
In other words, products and coproducts are graded maps. 
 \end{definition}

A bialgebra is graded if it is graded as an algebra as well as a coalgebra.

\begin{definition}[Filtered algebras and coalgebras] 
We consider now an algebra $A = \bigcup_{n \ge 0}A^{n}$ such that $A^{0} \subset A^{1} \subset \cdots \subset A^{n} \subset \cdots$. We say that $A$ is a filtered algebra if $A^{n}A^{m} \subset A^{n+m}.$ 

Similarly, a coalgebra $C = \bigcup_{n \ge 0}C^{n}$ is filtered if
\begin{align*}
   \Delta(C^{n}) \subset \sum_{i+j=n} C^{i} \otimes C^{j}.
\end{align*}
A bialgebra is filtered if is filtered as an algebra as well as a coalgebra. 
\end{definition}

Note that a graded bialgebra is always filtered since we can define the filtration in terms of
\begin{align*}
    A^{n}:= \bigoplus_{i=0}^{n}A_{i}.
\end{align*}

We can define filtered maps (this is a very weak notion) between filtered vector spaces $V,W$ as linear maps $\phi:V \to W$ such that $\phi(V^{n}) \subset W^{n}$, for all $n\ge 0$.

\subsubsection{Connected filtered Hopf algebra}
\label{ssec:confilHA}

Here we show that in the particular case where the bialgebra is connected and filtered, the antipode always exists. We start by introducing some notation. Let $A$ be an algebra, for $k \ge 0$ we define inductively the iterated product $m_{k}: A^{\otimes(k+1)} \to A$ 
\begin{align*}
    &m^{0}:= \id\\
    &m^{k} := m \circ (\id \otimes m^{k-1}),\quad \text{if}\;k \ge 1.
\end{align*}
Similarly, for a coalgebra $C$ we define the iterated coproduct $\Delta^{k}: C \to C^{\otimes(k+1)}$
\begin{align*}
    &\Delta^{0}:= \id\\
    &\Delta^{k} := (\id \otimes \Delta^{k-1}) \circ \Delta, \quad\text{if}\;k \ge 1.
\end{align*}
We can now define for any $f \in \text{Hom}(C,A)$ and $n \ge 1$, the $n$-th convolution power
\begin{align*}
    f^{*0} := u_{A} \circ \varepsilon_{C}
    \quad \text{and} \quad 
    f^{*n+1}:= m^{n} (\underbrace{f \otimes \cdots \otimes f}_{\text{n+1 times}}) \Delta^{n}.
\end{align*}

\begin{lemma}\cite[Section 4.2]{bib:manchon2008hopf}
\label{thm.coproduct_filtered_connected}
 Let $\curlyH$ be a connected filtered bialgebra, then for any $x \in \curlyH^{n}$ with  $n \ge 1$, we can write:
 \begin{align*}
     \Delta(x) = x \otimes 1_{\curlyH} + 1_{\curlyH} \otimes x + \widetilde{\Delta}(x),
 \end{align*}
 where the reduced coproduct (the non primitive part)
 \begin{align*}
    \widetilde{\Delta}(x) \in \bigoplus_{\substack{i+j=n \\ i \neq 0, j \neq 0}}\curlyH^{i} \otimes \curlyH^{j}.
 \end{align*}
 In particular, if $|x|\:=1$, where $|x|:= \textup{min}\{n \in \N: x \in \curlyH^{n}\}$ then
  \begin{align*}
    \Delta(x) = x \otimes 1_{\curlyH} + 1_{\curlyH} \otimes x .
 \end{align*}
Moreover,  $\widetilde{\Delta}$ is coassociative on $\textup{ker}(\varepsilon)$ and $\widetilde{\Delta}_{k} := (\id^{\otimes k - 1} \otimes \widetilde{\Delta}) \circ (\id^{\otimes k - 2} \otimes \widetilde{\Delta}) \circ \cdots \circ \widetilde{\Delta}$ sends $\curlyH^{n}$ to $(\curlyH^{n-k})^{\otimes k + 1}$.
\end{lemma}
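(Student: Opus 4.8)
The plan is to read everything off the two axioms linking the filtration to the coalgebra structure: the counit axiom and $\Delta(\curlyH^{n})\subseteq\sum_{i+j=n}\curlyH^{i}\otimes\curlyH^{j}$, together with connectedness $\curlyH^{0}=\ka\cdot 1_\curlyH$ (this identification holds because $\dim\curlyH^{0}=1$ and $1_\curlyH\in\curlyH^{0}$). Throughout I work with $x\in\curlyH^{n}\cap\ker\varepsilon$, so that $\varepsilon(x)=0$; this is the correct domain for the reduced coproduct, since on $\ka\cdot 1_\curlyH$ the displayed formula fails. I set $\widetilde{\Delta}(x):=\Delta(x)-x\otimes 1_\curlyH-1_\curlyH\otimes x$ and first locate it. Applying $\id\otimes\varepsilon$ and $\varepsilon\otimes\id$ and using the counit axiom together with $\varepsilon(x)=0$ and $\varepsilon(1_\curlyH)=1$, I obtain $(\id\otimes\varepsilon)\widetilde{\Delta}(x)=0$ and $(\varepsilon\otimes\id)\widetilde{\Delta}(x)=0$. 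Since $\curlyH=\ka\cdot 1_\curlyH\oplus\ker\varepsilon$, the kernels of these two maps are $\curlyH\otimes\ker\varepsilon$ and $\ker\varepsilon\otimes\curlyH$, whence $\widetilde{\Delta}(x)\in\ker\varepsilon\otimes\ker\varepsilon$.

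Next I bring in the filtration. Because $\Delta(x)\in\sum_{i+j=n}\curlyH^{i}\otimes\curlyH^{j}$ and the two boundary terms $x\otimes 1_\curlyH$, $1_\curlyH\otimes x$ lie in $\curlyH^{n}\otimes\curlyH^{0}+\curlyH^{0}\otimes\curlyH^{n}$, their difference $\widetilde{\Delta}(x)$ still lies in $\sum_{i+j=n}\curlyH^{i}\otimes\curlyH^{j}$. Intersecting with the location $\ker\varepsilon\otimes\ker\varepsilon$ from the previous step removes exactly the summands with $i=0$ or $j=0$, since these carry a tensor factor in $\curlyH^{0}=\ka\cdot 1_\curlyH$, which meets $\ker\varepsilon$ only in $0$. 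This leaves $\widetilde{\Delta}(x)\in\sum_{i+j=n,\,i,j\ge 1}\curlyH^{i}\otimes\curlyH^{j}$, the claimed membership (I read the $\bigoplus$ in the statement as this sum inside $\curlyH\otimes\curlyH$). The ``in particular'' case is then immediate: if $|x|=1$ the index set $\{\,i+j=1,\ i,j\ge 1\,\}$ is empty, so $\widetilde{\Delta}(x)=0$ and $x$ is primitive.

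For coassociativity of $\widetilde{\Delta}$ on $\ker\varepsilon$, I would expand $(\widetilde{\Delta}\otimes\id)\widetilde{\Delta}$ and $(\id\otimes\widetilde{\Delta})\widetilde{\Delta}$ by substituting $\widetilde{\Delta}=\Delta-(\,\cdot\otimes 1_\curlyH)-(1_\curlyH\otimes\,\cdot)$, using coassociativity of $\Delta$ and the counit axioms to cancel the mixed unit contributions; the two sides then agree on $\ker\varepsilon$. Finally, for the estimate on $\widetilde{\Delta}_{k}$, I argue by induction on $k$. The base case is the membership already proved, which in particular gives $\widetilde{\Delta}(\curlyH^{n}\cap\ker\varepsilon)\subseteq(\curlyH^{n-1}\cap\ker\varepsilon)^{\otimes 2}$, since every surviving factor has filtration degree $\le n-1$ and lies in $\ker\varepsilon$. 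For the inductive step, applying one further $\widetilde{\Delta}$ in a single tensor slot of a summand $\curlyH^{i_{0}}\otimes\cdots\otimes\curlyH^{i_{k}}$ with all $i_{l}\ge 1$ and $\sum_{l}i_{l}=n$ splits that slot into two slots of strictly smaller degree while preserving both the total-degree sum and positivity of all exponents. Hence every surviving term of $\widetilde{\Delta}_{k}(\curlyH^{n})$ is a tensor $\curlyH^{i_{0}}\otimes\cdots\otimes\curlyH^{i_{k}}$ with $i_{l}\ge 1$ and $\sum_{l}i_{l}=n$, forcing each $i_{l}\le n-k$ and so $\widetilde{\Delta}_{k}(\curlyH^{n})\subseteq(\curlyH^{n-k})^{\otimes k+1}$.

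The main obstacle, and the place I would be most careful, is that a filtration does not supply a canonical graded decomposition: the subspaces $\curlyH^{i}\otimes\curlyH^{j}$ with $i+j=n$ overlap, so literally ``reading off graded components'' is unavailable. The fix is precisely the two-step localisation above---first pin $\widetilde{\Delta}(x)$ inside $\ker\varepsilon\otimes\ker\varepsilon$ using only the counit, then intersect with the filtration sum---rather than projecting onto graded pieces. I would also flag that the displayed identity is intended for $x$ in the augmentation ideal $\ker\varepsilon$, since it fails verbatim on $\ka\cdot 1_\curlyH$ (in the graded applications of the paper this distinction is invisible, as there $\varepsilon$ vanishes in all positive degrees).
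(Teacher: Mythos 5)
The paper offers no proof of this lemma, deferring entirely to \cite[Section 4.2]{bib:manchon2008hopf}, and your argument is precisely the standard one from that source: set $\widetilde{\Delta}(x)=\Delta(x)-x\otimes 1_{\curlyH}-1_{\curlyH}\otimes x$, use the counit axioms to locate it in $\ker\varepsilon\otimes\ker\varepsilon$, intersect with the filtration estimate via the splitting $\curlyH^{m}=\ka\, 1_{\curlyH}\oplus(\curlyH^{m}\cap\ker\varepsilon)$ (which is exactly what makes your ``removes the $i=0$ or $j=0$ summands'' step rigorous despite the overlapping filtration pieces), and iterate for $\widetilde{\Delta}_{k}$. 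Your proof is correct, and your caveat that the displayed identity holds verbatim only on the augmentation ideal $\ker\varepsilon$ --- it fails at $x=1_{\curlyH}\in\curlyH^{n}$ --- is a well-taken correction to the statement as printed, invisible in the paper's graded applications where $\varepsilon$ vanishes in positive degrees.
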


Note that $1_{\curlyH}$ is group-like, i.e., $\Delta(1_{\curlyH})=1_{\curlyH} \otimes 1_{\curlyH}$. We will use the following variation to Sweedler's notation for the reduced coproduct $\widetilde{\Delta}$:
\begin{align*}
    \widetilde{\Delta}(x) := \sum_{(x)}x^{\prime} \otimes x^{\prime\prime}.
\end{align*}

From the next proposition, it follows that the antipode always exists in the connected filtered case.

\begin{proposition}\cite[Section 4.3]{bib:manchon2008hopf}
\label{thm.inv_conv_alg}
 Let $\curlyH$ be a connected filtered bialgebra. Suppose $A$ is an algebra. Consider the convolution algebra $(\mathrm{Hom}(\curlyH,A),*)$ with unit $u_{A}\circ\varepsilon_{\curlyH}$. The convolution product defines a group law on the set
 \begin{align*}
     G(A):= \{\varphi \in \mathrm{Hom}(\curlyH,A), \varphi(1_{\curlyH}) = 1_{A} \}.
 \end{align*}
\end{proposition}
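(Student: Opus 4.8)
The plan is to realize $(G(A),*)$ as a subgroup of the convolution monoid on $\mathrm{Hom}(\curlyH,A)$. Recall that $(\mathrm{Hom}(\curlyH,A),*)$ is already a unital associative algebra: associativity of $*$ is inherited from the associativity of $m_A$ together with the coassociativity of $\Delta$, and the unit is $e:=u_A\circ\varepsilon_\curlyH$. So I would only need to check that $G(A)$ is closed under $*$, contains $e$, and is closed under convolution inverses.

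The first two points are immediate. Since $\curlyH$ is connected, $\varepsilon(1_\curlyH)=1$, hence $e(1_\curlyH)=u_A(1)=1_A$ and $e\in G(A)$. For closure, if $\varphi,\psi\in G(A)$ then, using $\Delta(1_\curlyH)=1_\curlyH\otimes 1_\curlyH$, one computes $(\varphi*\psi)(1_\curlyH)=\varphi(1_\curlyH)\,\psi(1_\curlyH)=1_A$, so $\varphi*\psi\in G(A)$.

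The only real content is the existence of inverses, and this is where the connected filtered hypothesis enters. Given $\varphi\in G(A)$, I would set $\eta:=e-\varphi$. Because $\varphi$ and $e$ agree on $\curlyH^0=\ka\,1_\curlyH$, the map $\eta$ vanishes on $\curlyH^0$; equivalently $\eta=\eta\circ\pi$ for the projection $\pi:=\id-u_\curlyH\circ\varepsilon$ onto $\ker\varepsilon$. The key estimate is that $\eta$ is locally nilpotent for $*$: for $x\in\curlyH^n$ one has $\eta^{*k}(x)=0$ whenever $k>n$. To see this I would write $\eta^{*k}=m^{k-1}\circ\eta^{\otimes k}\circ\Delta^{k-1}=m^{k-1}\circ\eta^{\otimes k}\circ\pi^{\otimes k}\circ\Delta^{k-1}$, so that the projections force only the fully reduced part of $\Delta^{k-1}$ to contribute; by \cref{thm.coproduct_filtered_connected} this fully reduced part is $\widetilde\Delta_{k-1}$, which maps $\curlyH^n$ into $(\curlyH^{n-k+1})^{\otimes k}$. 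As the reduced coproduct takes values in tensor powers of $\ker\varepsilon$ and $\ker\varepsilon\cap\curlyH^0=0$ by connectedness, this image is zero as soon as $n-k+1\le 0$, i.e.\ whenever $k>n$. I expect this local nilpotency estimate to be the main obstacle, as it is the only step that genuinely uses the grading of the reduced coproduct.

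Granting this, the inverse is built as a levelwise finite geometric series $\psi:=\sum_{k\ge 0}\eta^{*k}$, well defined on each $\curlyH^n$ since only the terms $k=0,\dots,n$ survive there. A telescoping computation would then give $\psi*\varphi=\psi*(e-\eta)=\sum_{k\ge0}\eta^{*k}-\sum_{k\ge1}\eta^{*k}=e$, and symmetrically $\varphi*\psi=e$, so $\psi$ is the two-sided convolution inverse of $\varphi$. Finally $\psi\in G(A)$: since $\eta(1_\curlyH)=0$ and $\Delta^{k-1}(1_\curlyH)=1_\curlyH^{\otimes k}$, one has $\eta^{*k}(1_\curlyH)=0$ for $k\ge1$, whence $\psi(1_\curlyH)=e(1_\curlyH)=1_A$. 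This would show every element of $G(A)$ is invertible inside $G(A)$, completing the proof that $(G(A),*)$ is a group.
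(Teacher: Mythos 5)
Your proof is correct and takes essentially the same route as the paper: both construct the inverse as the convolution-geometric series $\sum_{k\ge 0}(u_A\circ\varepsilon_{\curlyH}-\varphi)^{*k}$, whose pointwise termination rests on the connected filtered structure recorded in \cref{thm.coproduct_filtered_connected}, and both check closure and the unit via $\Delta(1_{\curlyH})=1_{\curlyH}\otimes 1_{\curlyH}$. The only cosmetic difference is that you establish the local nilpotency of $\eta=u_A\circ\varepsilon_{\curlyH}-\varphi$ directly from the bound $\widetilde{\Delta}_{k-1}(\curlyH^{n})\subseteq(\curlyH^{n-k+1})^{\otimes k}$ via the projection $\pi=\id-u\circ\varepsilon$, whereas the paper re-derives the same vanishing by induction on the filtration degree.
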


\begin{proof}
 $G(A)$ is closed under the convolution product. Indeed
 \begin{align*}
     \varphi * \varphi^{\prime}(1_{\curlyH}) =m_{A} \circ (\varphi \otimes \varphi^{\prime}) \circ \Delta_{\curlyH}(1_{\curlyH}) = 1_{A}
 \end{align*}
 
 We need to show the existence of inverses. For this we consider
 \begin{align*}
      \varphi ^{*-1} := (u_{A}\circ\varepsilon_{\curlyH} - (u_{A}\circ\varepsilon_{\curlyH} - \varphi))^{*-1} = \sum_{n \ge 0}(u_{A}\circ\varepsilon_{\curlyH} -\varphi)^{*n}.
\end{align*}
The series always terminates. Indeed, first consider the elements of $\curlyH^{0}$. We have $(u_{A}\circ\varepsilon_{\curlyH})(1_{\curlyH})=1_{A}$ and $(u_{A}\circ\varepsilon_{\curlyH} - \varphi)^{*n}(1_{\curlyH})=0$ for $n \ge 1$, since $\Delta^{n}(1_{\curlyH}) = 1_{\curlyH}^{\otimes (n+1)} $. Due to the filtered bialgebra being connected, for all $x\in\curlyH^0$, it is true that $\varphi^{*k}(x)=0$ for $k>0$.

Now for $n\geq 0$ and  $x\in\curlyH^n$, suppose that  $(u_{A}\circ\varepsilon_{\curlyH} - \varphi)^{*k}(x)=0$ for $k> n$. If $y\in\curlyH^{n+1}$, then it holds true that
\begin{align*}
  \Delta(y) &= y\otimes 1_{\curlyH}+ 1_{\curlyH}\otimes y + \sum_{(y)} y^\prime \otimes y^{\prime\prime},
\end{align*}
where $y^\prime,y^{\prime\prime} \in \curlyH^{n}$ by \cref{thm.coproduct_filtered_connected}. Using coassociativity, one can observe that for $k>n$,
\begin{align*}
    (u_{A}\circ\varepsilon_{\curlyH} - \varphi)^{*k+1}(y)
    &= 
    m^{k} \circ (u_{A}\circ\varepsilon_{\curlyH} - \varphi)^{\otimes k+1} \Delta^{k}(y) \\
    &= 
    m^{k} \circ (\varphi^{\otimes k}\otimes \varphi)  \lrp{ \Delta^{k-1}(y) \otimes 1_{\curlyH}+ \Delta^{k-1}(1_{\curlyH}) \otimes y + \sum_{(y)} \Delta^{k-1}(y^\prime) \otimes y^{\prime\prime}  } \\
    &=  {\varphi}^{*k}(y){\varphi}(1_{\curlyH}) +
    \varphi^{*k}(1_{\curlyH})\varphi(y) + \sum_{(y)} {\varphi}^{*k}(y^\prime)\varphi(y^{\prime\prime}).
\end{align*}
Notice that, by induction hypothesis $y^\prime\in\curlyH^n$ and hence
\begin{align*}
  \varphi(1_\curlyH) = \varphi^{*k}(1_{\curlyH}) &=
    \varphi^{*k}(y^\prime) = 0.
\end{align*}
\end{proof}

In the special case in which we consider the convolution algebra on $\text{End}(\curlyH)$, recall that the antipode is the convolutional inverse of the identity:
\begin{align}
\label{eq:antipode_inverse}
      S = \sum_{n \ge 0}(u \circ \varepsilon -\id)^{*n}.
 \end{align}

\begin{remark}
As pointed out in \cite{bib:manchon2008hopf}, when having a connected filtered Hopf algebra, we can compute the antipode recursively. Recall that $S(1_{\curlyH})=1_{\curlyH}$. Let $x \in \curlyH^{n}$, $n \ge 1$ then
\begin{align*}
    m \circ ( S \otimes \id )\Delta(x) = u \circ \varepsilon (x) = 0
\end{align*}
and
\begin{align*}
    0=m \circ ( S \otimes \id )\Delta(x) 
    &= m \circ ( S \otimes \id )(1_{\curlyH} \otimes x + x \otimes 1_{\curlyH} + \widetilde{\Delta}(x))\\
    &=x + S(x) + m \circ ( S \otimes \id )\widetilde{\Delta}(x)\\
    &= x + S(x) + \sum_{(x)}S(x^{\prime})x^{\prime\prime}. 
\end{align*}
This yields the recursion
\begin{align}
\label{eq:antipode_recursion}
    S(x) = -x - \sum_{(x)}S(x^{\prime})x^{\prime\prime}, 
\end{align}
where in the sum on the righthand side, $S$ is evaluated on strictly smaller elements.
\end{remark}

\subsubsection{Dual vector spaces, adjoint maps}
\label{sec.DualBialgebras}

We introduce notation for the dual of a vector space and for the graded dual. The main reference for this section is \cite{grinberg2020hopf}. The vector spaces we will be working on are of \textbf{finite type}, i.e. they are graded $V = \bigoplus_{n \ge 0}V_{n}$ and each subspace $V_{n}$ has finite dimension.

\begin{definition}[Graded dual]
\label{def.gradedDual}
Let $V$ be a $\mathbf{k}$-vector space. The dual of $V$ is the $\mathbf{k}$-vector space $V^{*}:=\text{Hom}(V,\mathbf{k})$. 
When $V=\bigoplus_{n \ge 0}V_{n}$ is a graded vector space, the \textbf{graded dual} is the subspace $V^{\circ}:= \bigoplus_{n \ge 0}V^{*}_{n}\subset V^*$.
\end{definition}

Note that $V^*=\prod_{n\geq 0} V_n^*$ is a larger space than $V^{\circ}$. Functions that are non-zero on infinitely many $V_{n}$ are also part of $V^{*}$. For instance, let $\zeta \in V^*$ be the functional sending each basis element of $V$ to $1$. Then, $\zeta\in V^*$, but $\zeta\not\in V^\circ$. 

For a vector space $V$, we denote the \textbf{bilinear pairing} as follows:
\begin{align*}
    \langle \cdot, \cdot \rangle_{V}:& V^{*} \otimes V \to \mathbf{k}\\
    & f\otimes v \mapsto f(v).
\end{align*}
Consider now a $\mathbf{k}$-linear map $\phi:V \to W$. It induces an \textbf{adjoint map} $\phi_{adj}:W^{*} \to V^{*}$ defined as follows:
\begin{align*}
    \langle g, \phi(v) \rangle_{W} = \langle \phi_{adj}(g), v \rangle_{V}
\end{align*}
In case $V,W$ are finite-dimensional, then $\text{Hom}(V,W)$ and $\text{Hom}(W^*,V^*)$ are isomorphic as vector spaces via the adjoint map. This connection is carried on by the matrix transpose on this finite case. However, there is not an equivalent notion of matrix transpose for the infinite-dimensional case~\cite{simon1997there}.

Fortunately, it is known \cite{grinberg2020hopf,bib:hazewinkel2010algebras,cartier2007primer} that this notion is well defined for the graded dual of finite type vector spaces and homomorphisms respecting that grading.

For a graded coalgebra $(C,\Delta_{C})$, the \textbf{dual product} on $C^{\circ} \otimes C^{\circ}$ is uniquely defined as follows.
\begin{align}
\label{eq:dual_coalg}
    \langle m_{C^{\circ} \otimes C^{\circ}}(a \otimes b) , c \rangle_{C} :=    \langle a \otimes b , \Delta_{C}(c) \rangle_{C \otimes C}.
\end{align}
The linear transformation $m_{C^{\circ} \otimes C^{\circ}}$ is then the adjoint map of $\Delta_{C}$ accordingly restricted to the graded dual. Note that this is only a \underbar{subalgebra} of the convolution algebra on $C^{*}$.

Moreover, for a graded algebra $(A,m_A)$ on a vector space of finite type, one can uniquely define its \textbf{dual coproduct}.
\begin{align}
\label{eq:dual_alg}
    \langle \Delta_{A^{\circ}}(c) , a \otimes b \rangle_{A} :=    \langle  c, m_{A}(a \otimes b) \rangle_{A \otimes A}.
\end{align}

These definitions allow us to dualize immediately graded bialgebras. 
In the following, we will consider bialgebras that are not graded and give sufficient conditions so that \eqref{eq:dual_coalg} and \eqref{eq:dual_alg} can still be applied. 

\begin{lemma}
\label{lemma:dualnongraded}
Suppose that $(C,\Delta)$ is a coalgebra where $C=\bigoplus_{n \ge 0}C_{n}$ is of finite type. If
\begin{align*}
    \Delta(C_{n}) \subset \bigoplus_{i+j \ge n} C_{i} \otimes C_{j},
\end{align*}
then its dual product can be defined as in~\cref{eq:dual_coalg}.
Assume that $(A,m_A)$ is an algebra where $A=\bigoplus_n A_n$ is of finite type. If
\begin{align*}
    m(A_{n} \otimes A_{k}) \subset \bigoplus_{\ell \ge \textup{max}(n,k)}A_{\ell},
\end{align*}
then we can define its dual coalgebra $\Delta_{A^{\circ}}(c)$as in \eqref{eq:dual_alg}.
\end{lemma}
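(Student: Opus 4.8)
The plan is to verify, in each of the two cases, that the candidate dual operation lands in the graded dual rather than merely in the full linear dual; associativity, coassociativity and the like are not asserted by the statement, so the entire content is a support/finiteness bookkeeping argument driven by the two monotone degree conditions together with the finite-type hypothesis.

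First I would treat the coalgebra case. Fix $a,b \in C^\circ$ and let $m_{C^\circ\otimes C^\circ}(a\otimes b)$ be the functional $c \mapsto \langle a\otimes b,\Delta(c)\rangle$; this is unambiguous on each $c$ since $\Delta(c)$ is a finite tensor, so the pairing is the finite sum $\sum a(c_{(1)})b(c_{(2)})$. By bilinearity it suffices to establish membership in $C^\circ$ for homogeneous $a\in C_p^*$, $b\in C_q^*$. The key step is that for $c\in C_n$ the hypothesis $\Delta(C_n)\subset\bigoplus_{i+j\ge n}C_i\otimes C_j$ forces the $C_p\otimes C_q$ component of $\Delta(c)$ — the only component detectable by $a\otimes b$ — to vanish whenever $p+q<n$. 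Hence $m_{C^\circ\otimes C^\circ}(a\otimes b)$ annihilates $C_n$ for every $n>p+q$, so it is supported on $\bigoplus_{n\le p+q}C_n$ and therefore lies in $C^\circ$. This is exactly the dual product \eqref{eq:dual_coalg}.

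Next I would dualize the algebra. Fix homogeneous $c\in A_m^*$ (the general case following by linearity) and define $\Delta_{A^\circ}(c)$ as the functional $a\otimes b\mapsto\langle c,m_A(a\otimes b)\rangle$ on $A\otimes A$. For homogeneous $a\in A_n$, $b\in A_k$ the hypothesis $m(A_n\otimes A_k)\subset\bigoplus_{\ell\ge\max(n,k)}A_\ell$ shows $\langle c,m_A(a\otimes b)\rangle$ can be nonzero only when $m\ge\max(n,k)$, i.e. $n\le m$ and $k\le m$. Thus $\Delta_{A^\circ}(c)$ is supported on the finite-dimensional subspace $\bigoplus_{n,k\le m}A_n\otimes A_k$, so it defines an element of $(A\otimes A)^\circ$. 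To conclude I would invoke the fact recalled just before the lemma: for finite-type graded spaces the canonical map $A^\circ\otimes A^\circ\to(A\otimes A)^\circ$ is an isomorphism, since $A_n^*\otimes A_k^*\cong(A_n\otimes A_k)^*$ when $A_n,A_k$ are finite-dimensional and every graded-dual element is finitely supported. Under this identification $\Delta_{A^\circ}(c)\in A^\circ\otimes A^\circ$, which is precisely \eqref{eq:dual_alg}.

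The main obstacle — and essentially the only nontrivial point — is this passage from ``functional supported on finitely many graded pieces'' to ``genuine element of the graded dual (or its tensor square)''. Without the monotone degree conditions the dual product and coproduct would only be guaranteed to land in the full duals $C^*$ and $(A\otimes A)^*$, which are strictly larger, while the finite-type hypothesis is exactly what legitimizes the tensor-square identification $A^\circ\otimes A^\circ\cong(A\otimes A)^\circ$.
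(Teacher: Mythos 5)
Your proof is correct and follows essentially the same route as the paper's: fixing homogeneous elements and using the monotone degree conditions to show the dualized operation vanishes on all but finitely many graded pieces, hence lands in the graded dual. Your extra step of making the finite-type identification $A^{\circ}\otimes A^{\circ}\cong (A\otimes A)^{\circ}$ explicit is a reasonable filling-in of a detail the paper leaves implicit.
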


\begin{proof}
Fix $a\otimes b \in C_{i}^\circ \otimes C_{j}^\circ$. Then $\langle  a \otimes b, \Delta(c) \rangle_{C \otimes C}$ can be non-zero only for finitely many basis elements of $C$. Indeed, for $c\in C_{\ell}$ where $\ell > i+j $ it holds that $\Delta(c) \in \bigoplus_{x+y \geq l} C_{x}\otimes C_{y}$. Since $i+j < \ell $ and $x+y \geq \ell $, 
\begin{align*}
\langle  a \otimes b, \Delta(c) \rangle_{C \otimes C}=0
 \end{align*}
Fix $c \in A^{\circ}_{t}$. $\langle  c, m(a \otimes b) \rangle_{A \otimes A}$ can be non-zero only for a finite number of basis elements of $A \otimes A$. Indeed: \begin{align*}
      a \otimes b \in \bigoplus_{i> t\;\text{or}\;j > t
      } A_{i} \otimes A_{j} \implies \langle  c, m(a \otimes b) \rangle_{A \otimes A}=0.
  \end{align*}
\end{proof}

\subsubsection{Characters}
 
 Assume that $\curlyH$ is a Hopf Algebra. The characters form a group.
 
\begin{proposition}
    \label{thm.convolutionOfCharacters}
   If $(A,m_{A},u_A)$ is a commutative unital associative algebra. If $f,g:(\curlyH,m_{\curlyH}) \to (A,m_A)$ are unital algebra homomorphisms, then $f*(g \circ S)$ is also a unital algebra morphism.
   
\end{proposition}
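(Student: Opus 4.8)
The plan is to factor the claim into two independent facts and then compose them: first I would show that $g\circ S$ is itself a unital algebra homomorphism (i.e.\ a character), and second that the convolution of \emph{any} two characters into a commutative algebra is again a character. Applying the second fact to the pair $(f,\,g\circ S)$ then gives that $f*(g\circ S)$ is a unital algebra morphism.

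For the first step I would invoke the earlier theorem stating that the antipode is an algebra anti-homomorphism, $S\circ m_\curlyH = m_\curlyH\circ (S\otimes S)\circ \tau$, together with $S(1_\curlyH)=1_\curlyH$. Then for $x,y\in\curlyH$ one computes $(g\circ S)(xy)=g\big(S(y)S(x)\big)=g(S(y))\,g(S(x))$, and since $A$ is commutative this equals $(g\circ S)(x)\,(g\circ S)(y)$; unitality follows from $(g\circ S)(1_\curlyH)=g(1_\curlyH)=1_A$. Hence $g\circ S$ is a character.

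For the second step, let $p,q\colon\curlyH\to A$ be unital algebra homomorphisms and write $\Delta(x)=\sum x_{(1)}\otimes x_{(2)}$ in Sweedler notation. Using that $\Delta$ is an algebra morphism (the bialgebra axiom), $\Delta(xy)=\Delta(x)\Delta(y)=\sum x_{(1)}y_{(1)}\otimes x_{(2)}y_{(2)}$, so that
\begin{align*}
(p*q)(xy) &= \sum p(x_{(1)}y_{(1)})\,q(x_{(2)}y_{(2)}) \\
          &= \sum p(x_{(1)})\,p(y_{(1)})\,q(x_{(2)})\,q(y_{(2)}).
\end{align*}
Here commutativity of $A$ lets me regroup the four factors as $\big(\sum p(x_{(1)})q(x_{(2)})\big)\cdot\big(\sum p(y_{(1)})q(y_{(2)})\big)=(p*q)(x)\,(p*q)(y)$. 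Unitality is immediate from $\Delta(1_\curlyH)=1_\curlyH\otimes 1_\curlyH$ and $p(1_\curlyH)=q(1_\curlyH)=1_A$. Taking $p=f$ and $q=g\circ S$ then yields the claim.

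The only real subtlety --- and the reason commutativity of $A$ sits in the hypotheses --- is that commutativity is used twice: once to turn the anti-homomorphism $g\circ S$ into a genuine homomorphism, and once to reorder the four Sweedler factors. Neither regrouping works without it, so I expect the ``obstacle'' to amount to nothing deeper than keeping careful track of where commutativity of $A$ and the anti-homomorphism property of $S$ enter, rather than any essential difficulty.
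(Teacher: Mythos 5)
Your proof is correct, and it differs from the paper's argument in a way worth spelling out. Your second step (convolution of two characters is a character) is the same computation as the paper's first step, written in Sweedler notation where the paper works compositionally: both reduce, via the bialgebra axiom $\Delta_\curlyH \circ m_\curlyH = (m_\curlyH\otimes m_\curlyH)\circ(\id\otimes\tau\otimes\id)\circ(\Delta_\curlyH\otimes\Delta_\curlyH)$, to the identity $m_A\circ(m_A\otimes m_A)\circ(\id\otimes\tau\otimes\id) = m_A\circ(m_A\otimes m_A)$, which is exactly commutativity of $A$. The genuine divergence is in how $g\circ S$ is handled. The paper does not prove that $g\circ S$ is a character; it instead computes $m_A\circ(g\otimes(g\circ S))\circ\Delta_\curlyH = g\circ m_\curlyH\circ(\id\otimes S)\circ\Delta_\curlyH = u_A\circ\varepsilon_\curlyH$, i.e., it exhibits $g\circ S$ as the convolution inverse of $g$, using only the defining antipode identity. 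Combined with closure, this does not by itself yield the stated conclusion: a convolution-closed set of maps need not contain the convolution inverses of its elements (the positive integers inside the multiplicative rationals illustrate the point), so one still needs your first step --- that $g\circ S$ is itself a character, which you derive from the anti-homomorphism property $S\circ m_\curlyH(a\otimes b) = m_\curlyH(S(b)\otimes S(a))$ (stated just before this proposition in the paper's appendix, with proof cited to the literature) together with commutativity of $A$ and $S(1_\curlyH)=1_\curlyH$. In this sense your decomposition is the logically complete one; the cost is that it invokes the nontrivial anti-multiplicativity of $S$, whereas the paper's inverse computation needs only the antipode axiom but, as written, leaves the character property of $g\circ S$ (and hence of $f*(g\circ S)$) implicit.
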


\begin{proof}
    Recall that $h\circ m_{\mathcal{H}} = m_{A} \circ (h\otimes h)$ if $h$ is an algebra morphisms.
    \begin{align*}
        m_{A} \circ (f \otimes g) \circ \Delta_{\mathcal{H}} \circ m_{\mathcal{H}}
        &= 
        m_{A} \circ (f \otimes g) \circ
        (m_{\mathcal{H}} \otimes m_{\mathcal{H}}) \circ \tau_{(2,3)} \circ 
        (\Delta_{\mathcal{H}} \otimes \Delta_{\mathcal{H}}) \\
        &= 
        m_{A} \circ
        (m_{A} \otimes m_{A}) \circ (f\otimes f \otimes g \otimes g) \circ \tau_{(2,3)} \circ 
        (\Delta_{\mathcal{H}} \otimes \Delta_{\mathcal{H}}) \\
        &= 
        m_{A} \circ
        (m_{A} \otimes m_{A}) \circ\tau_{(2,3)} \circ (f\otimes g \otimes f \otimes g) \circ 
        (\Delta_{\mathcal{H}} \otimes \Delta_{\mathcal{H}}).
    \end{align*}
    Here, $\sigma_{(2,3)}:= \id \otimes \tau \otimes \id$. Thanks to commutativity of $A$, 
    we have that 
    $$
        m_{A} \circ
        (m_{A} \otimes m_{A}) \circ\tau_{(2,3)} = m_{A} \circ
        (m_{A} \otimes m_{A}).
    $$
    Therefore,
    \begin{align*}
        m_{A} \circ (f \otimes g) \circ \Delta_{\mathcal{H}} \circ m_{\mathcal{H}}
        &= 
        m_{A} \circ
        (m_{A}\circ (f\otimes g) \circ  \Delta_{\mathcal{H}} )  \otimes 
        (m_{A}\circ (f\otimes g) \circ  \Delta_{\mathcal{H}} ).
    \end{align*}
    This shows that the set of algebra morphisms is closed under the convolution product. Now, regarding inverses, we see that
    \begin{align*}
      m_{A} \circ (g \otimes (g\circ S) ) \circ \Delta_{\mathcal{H}} &= 
        m_{A} \circ (g\otimes g) \circ (\id \otimes S) \circ \Delta_{\mathcal{H}} \\
                                                                     &= 
        g \circ m_{\mathcal{H}} 
        \circ (\id \otimes S) \circ \Delta_{\mathcal{H}} \\
                                                                     &= g \circ u_{\mathcal{H}}\circ \varepsilon_{\mathcal{H}} \\
                                                                     &= 
        u_{A}\circ\varepsilon_{\mathcal{H}}.
    \end{align*}
\end{proof}

\subsection{Coassociativity, cocommutativity, associativity and commutativity}
\label{ssec:Associativity.Proof}

\begin{theorem}[as in \cref{thm:coassociative}]
  \label{thm:coassociative_app}
  The coproducts in \cref{sssec:symgraph}, \cref{sssec:coalgebra_sub}, and  \cref{sssec:coalgebra induced} are counital, coassociative and cocommutative. Dually, the corresponding products are unital, associative and commutative.
\end{theorem}

\begin{proof}

It is immediate to see that the coproducts are counital and the products are unital. Cocommutativity is also immediate. Associativity of $\cdot_{\hom}$ and $\cdot_{\hom^{\prime}}$ is immediate. Coassociativity of $\Delta_{\hom}$ and $\Delta_{\hom^{\prime}}$ then follows since $\cdot_{\hom}$ and $\cdot_{\hom^{\prime}}$ are graded and the underlying vector space is of finite type.

We now show the coassociativity of $\Delta_{\mono^{\prime}}$. Define for $\tau \in \FinGraph$, the following sets,
\begin{align*}
    S&:=\{((A_{1},A_{2}),(B_{1},B_{2}),(C_{1},C_{2}))|A_{1} \cup B_{1} \cup C_{1} = V(\tau), A_{2} \cup B_{2} \cup C_{2} = E(\tau),\\
    &\qquad\qquad\qquad\qquad \bigcup A_{2} \subseteq A_{1}, \bigcup B_{2} \subseteq B_{1}, \bigcup C_{2} \subseteq C_{1}\}\\
     T&:=\{((A_{1},A_{2}),(B_{1},B_{2})|A_{1} \cup B_{1} = V(\tau), A_{2} \cup B_{2} = E(\tau), \bigcup A_{2} \subseteq A_{1}, \bigcup B_{2} \subseteq B_{1}\}
\end{align*}
and the maps,
\begin{align*}
 \phi&: S \to T,\;((A_{1},A_{2}),(B_{1},B_{2}),(C_{1},C_{2})) \mapsto ((A_{1}\cup B_{1},A_{2}\cup B_{2}),(C_{1},C_{2}))\\
  \psi&: S \to T,\;((A_{1},A_{2}),(B_{1},B_{2}),(C_{1},C_{2})) \mapsto ((A_{1},A_{2}),(B_{1}\cup C_{1}, B_{2} \cup C_{2})).
\end{align*}
$\phi$ and $\psi$ are clearly surjective. We denote with $s:=((A_{1},A_{2}),(B_{1},B_{2}),(C_{1},C_{2}))$ an element of $S$ and also write $f(s):=\tau\evaluatedAt{A_{1},A_{2}}\otimes\tau\evaluatedAt{B_{1},B_{2}}\otimes\tau\evaluatedAt{C_{1},C_{2}}$.
We then have
\begin{align*}
    \sum_{t \in T}\sum_{s \in \psi^{-1}(t)}f(s) = \sum_{s \in S} f(s) =\sum_{t \in T}\sum_{s \in \phi^{-1}(t)}f(s)
    \end{align*}
    and 
    \begin{align*}
        \sum_{t \in T}\sum_{s \in \psi^{-1}(t)}f(s) &=\sum_{\substack{X_{1}\cup Y_{1}=V(\tau)\\X_{2}\cup Y_{2}=E(\tau)\\ \cup X_{2}\subseteq X_{1},  \cup Y_{2}\subseteq Y_{1}}}\sum_{\substack{A_{1}\cup B_{1}=X_{1}\\A_{2}\cup B_{2}=X_{2}\\\cup A_{2}\subseteq A_{1},  \cup B_{2}\subseteq B_{1}}}\tau\evaluatedAt{A_{1},A_{2}}\otimes\tau\evaluatedAt{B_{1},B_{2}}\otimes\tau\evaluatedAt{Y_{1},Y_{2}}\\
        \sum_{t \in T}\sum_{s \in \phi^{-1}(t)}f(s) &=\sum_{\substack{X_{1}\cup Y_{1}=V(\tau)\\X_{2}\cup Y_{2}=E(\tau)\\ \cup X_{2}\subseteq X_{1},  \cup Y_{2}\subseteq Y_{1}}}\sum_{\substack{A_{1}\cup B_{1}=Y_{1}\\A_{2}\cup B_{2}=Y_{2}\\\cup A_{2}\subseteq A_{1},  \cup B_{2}\subseteq B_{1}}}\tau\evaluatedAt{X_{1},X_{2}}\otimes\tau\evaluatedAt{A_{1},A_{2}}\otimes\tau\evaluatedAt{B_{1},B_{2}}.
    \end{align*}
This shows that $\forall \tau \in \FinGraph$:
\[(\Delta_{\mono^{\prime}} \otimes \id) \circ \Delta_{\mono^{\prime}}(\tau) =  (\id \otimes \Delta_{\mono^{\prime}}) \circ \Delta_{\mono^{\prime}}(\tau).\] The proof for $\Delta_{\regmono^{\prime}}$ is very similar. It then follows, using \cref{lemma:dualnongraded} that $\cdot_{\mono^{\prime}}$ and $\cdot_{\regmono^{\prime}}$ are associative products. We can then easily see, thanks to \cref{remark:defacto3}, that $\cdot_{\mono}$ and $\cdot_{\regmono}$ are also associative and using once again \cref{lemma:dualnongraded} we obtain the coassociativity of $\Delta_{\mono}$ and $\Delta_{\regmono}$. 
\end{proof}

\subsection{Categorical background}
\label{ssec:category}

We recall basic notions from category theory.

\subsubsection{Factorization of morphisms}

For the moment we work in an
arbitrary category $\Cat$.
Let $A,B$ be objects in $\Cat$.
A morphism $f \in \mor(A, B)$ is
\begin{itemize}

  \item
    a \textbf{monomorphism}
    if for all objects $C$ and all $g,h \in \mor(C, A)$
    \begin{align*}
      f \circ g  = f \circ h \Rightarrow g = h.
    \end{align*}

  \item
    a \textbf{regular monomorphism}
    if it is the equalizer of some parallel pair of morphisms,
    i.e. if there is a limit diagram of the form
    \begin{align*}
      A \arr{f} B \rightrightarrows D.
    \end{align*}
    It is well-known that a regular monomorphism is a monomorphism.

  \item 
    an \textbf{epimorphism},
    if for all objects $C$ and for all $g,h \in \mor(B, C)$
    \begin{align*}
      g \circ f = h \circ f \Rightarrow g = h.
    \end{align*}

  \item
    a \textbf{regular epimorphism}
    if it is the coequalizer of some parallel pair of morphisms,
    i.e. if there is a colimit diagram of the form
    \begin{align*}
      D \rightrightarrows A \arr{f} B.
    \end{align*}
    It is well-known that a regular epimorphism is an epimorphism.

  \item
    a \textbf{section}
    if $f$ as a left inverse, i.e.
    if there is an $f_{L}\in\mor(B, A)$ such that
    \begin{align*}
     f_{L} \circ f = \id_{A}.
    \end{align*}

     \item
    a \textbf{retraction}
    if $f$ as a right inverse, i.e. if there is an $f_{R}\in\mor(B,A)$ such that
    \begin{align*}
     f \circ f_{R} = \id_{A}.
    \end{align*}

\item  an \textbf{isomorphism} if it has a two sided inverse: there is $f^{-1} \in \mor(B,A)$
    with $f^{-1} \circ f = \id_A, f \circ f^{-1} = \id_{B}$.

 \item
    an \textbf{extremal monomorphism}
    if whenever $f = c \circ e$ with $e$ an epimorphism, then $e$ is an isomorphism. 

    \item
    an \textbf{extremal epimorphism}
    if whenever $f = m \circ c$ with $m$ a monomorphism, then $m$ is an isomorphism.
    
\end{itemize}

Let $E,M$ be classes of morphisms in $\Cat$.
We say that $\Cat$ has \textbf{$(E,M)$-factorization}
if every morphism $f$
in $\Cat$ can be written as $f=e \circ m$
for some $e \in E, m \in M$.

From \cite{addmek1990abstract} , we recall the following definition.
\begin{definition}
\label{def:EMfact}
A category $\Cat$ is \textbf{$(E,M)$-structured} if
\begin{enumerate}
 \item $E$ and $M$ are both closed under composition of isomorphisms,
 \item  $\Cat$ has \textbf{$(E,M)$-factorization},
 \item $\Cat$ has the \textbf{unique} \textbf{$(E,M)$-diagonalization property}, i. e., for each commutative diagram:
 \begin{center}
\begin{tikzcd}
A \arrow[r, "e"] \arrow[d,"f"']
& B \arrow[d, "g"] \\
C \arrow[r, "m"']
&  D
\end{tikzcd}
\end{center}
with $e \in E$ and $m \in M$ there exists a unique morphism $d$ such that the following diagram commutes:
\begin{center}
\begin{tikzcd}
A \arrow[r, "e"] \arrow[d,"f"']
& B \arrow[d, "g"] \arrow[dl,"d"'] \\
C \arrow[r, "m"']
&  D
\end{tikzcd}
\end{center}
 \end{enumerate}
 \end{definition}

The proof of the following proposition can be found in \cite[Proposition 14.4]{addmek1990abstract}.
 
\begin{proposition}
\label{proposition:unique_fact}
If $\mathcal{C}$ is $(E,M)$-structured, then $(E,M)$-factorizations are essentially unique, i.e.,
\begin{enumerate}

\item if $A \arr{e_{i}} C_{i}\arr{m_{i}} B$ are $(E,M)$-factorizations of $A \arr{f}B$ for $i = 1, 2$, then there
exists a (unique) isomorphism $h$, such that the following diagram commutes:
\begin{center}
\begin{tikzcd}
A \arrow[r, "e_{1}"] \arrow[d,"e_{2}"']
& C_{1} \arrow[d, "m_{1}"] \arrow[dl,"h"'] \\
C_{2} \arrow[r, "m_{2}"']
&  B
\end{tikzcd}
\end{center}

\item if $A \arr{f}B = A \arr{e} C\arr{m} B$ is a factorization and $C \arr{h} D$
is an isomorphism, then $A \arr{f}B = A \arr{h \circ e} D\arr{m \circ h^{-1}} B$ is also an $(E,M)$-factorization of $f$.
\end{enumerate}

\end{proposition}

We also recall part of \cite[Proposition 14.14]{addmek1990abstract}.
\begin{proposition}
\label{prop:rmer}
If a category $\Cat$ admits $(\RegEpi,\Mono)$ ($(\Epi,\RegMono)$) factorization, then the category is $(\RegEpi,\Mono)$($(\Epi,\RegMono)$)-structured. This factorization is unique in the sense of \cref{proposition:unique_fact}.
\end{proposition}
\begin{proof}
As an example, $\RegMono$ is closed under precomposition with isomorphisms.
Indeed, if $f \in \mor(A,B)$ is a regular monomorphism,
it is the equalizer of some parallel pair $\ell,m \in \mor(B,C)$.
If $\phi: \mor(X,A)$ is an isomorphism,
then $f \circ \phi$ is the equalizer of the same parallel pair $\ell,m$.

\end{proof}

\subsubsection{Category of finite simple graph}
\label{sssec: categoryfinitesimplegraphs}

The category of  finite simple graphs consists of
finite simple graphs $\sigma = (V(\sigma),E(\sigma))$ as objects
and graph homomorphisms as morphisms,
i.e. $f: \sigma \to \tau$ is a morphism if $\{v_{i},v_{j}\} \in E(\sigma)$ implies
$\{f(v_{i}),f(v_{j})\} \in E(\tau)$. We now give a concrete description of some of thes above classes of morphisms.

\begin{proposition}
  In the category of finite simple graphs
  $f \in \mor(\sigma,\tau)$ is
  \begin{enumerate}
    \item a monomorphism if and only if the function $f: V(\sigma)\to V(\tau)$ is injective (\textit{injective on vertices}\footnote{.. and then automatically on edges ..}).
    
    \item an epimorphism if and only if the function $f: V(\sigma)\to V(\tau)$ is surjective (\textit{surjective on vertices}).
    
  \end{enumerate}
\end{proposition}
\begin{proof}
\begin{enumerate}

\item 
  Let $f \in \mor(\sigma,\tau)$, $f: V(\sigma) \to V(\tau)$ be injective on vertices. Now take any $g,g^{\prime} \in \mor(\tau,\gamma)$, $g,g^{\prime}: V(\tau) \to V(\gamma)$. If $(f \circ g) (v) = (f \circ g^{\prime}) (v)$, then  it follows that $g(v) = g^{\prime}(v)$ for all $v \in V(\sigma)$, i.e. $f$ is a monomorphism. Now let $f \in \mor(\sigma,\tau)$ be a monomorphism. Assume that $f$ is not injective. Then there exists two different vertices $v,v^{\prime} \in V(\sigma)$ such that $f(v)=f(v^{\prime})$. Take the graph with a unique vertex, $H:=(*,\emptyset)$ and consider $g,g^{\prime} \in \mor(H,\sigma)$ defined as $g(*)=v$ and $g^{\prime}(*)=v^{\prime}$. Then we have $f \circ g = f \circ g^{\prime}$ and $g \neq g^{\prime}$, i.e. $f$ is not a monomorphism, which is a contradiction.
   \item It is easy to see that if $f \in \mor(\sigma, \tau)$, $f: V(\sigma) \to V(\tau)$ is surjective on vertices, then it is an epimorphism. Now assume that $f$ is an epimorphism but is not surjective. Then, define the graph $\tau^{\prime}$ as follows
  \begin{align*}
    V(\tau^{\prime}) &:= \im f\ \disj \ ( V(\tau) \setminus \im f)\ \disj ( ( V(\tau) \setminus \im f ) \times \{ * \}) \\
    E(\tau^{\prime}) &:=
    \{ \{x,y\}
      \mid
      \{x,y\} \in E(\tau)
      \text{ \underbar{or} }
      x = (x',*), y = (y',*) \text{ with } \{x',y'\} \in E(\tau) \\
      &\qquad \qquad \qquad
      \text{ \underbar{or} }
      x \in V(\tau), y = (y',*) \text{ with } \{x,y'\} \in E(\tau)\}.
  \end{align*}
  Define $g,g^{\prime}: \tau \to \tau^{\prime}$ as $g(v) = v, \forall v \in V(\tau)$
  and
  \begin{align*}
    g'(v) :=
    \begin{cases}
      v &\qquad v \in \im f \\
      (v,*) &\qquad a \in V(\tau)\setminus \im f
    \end{cases}.
  \end{align*} then it follows that $ g \circ f = g^{\prime} \circ f$ and $g \neq g^{\prime}$, i.e. $f$ is not an epimorphism, which is a contradiction.
  \end{enumerate}
\end{proof}

We now show how equalizers and coequalizers behave in this category.

\begin{lemma}
\label{lemma:graph_equalizers}
  In the category of simple graphs, equalizers always exist.
\end{lemma}

\begin{proof}
Let $f, f^{\prime} \in \mor(\sigma,\tau)$. We define $S:=\{a \in V(\sigma)\;|\;f(a)=f^{\prime}(a)\}$ and consider the induced subgraph $\sigma_{S}$ and the inclusion map $\iota: \sigma_{S} \to \sigma$. Clearly $\iota \circ f = \iota \circ f^{\prime}$ and $\iota \in \mor(\sigma_{S},\sigma)$. Let $(\sigma^{\prime},g)$, with $g:\sigma^{\prime} \to \sigma$, be a cone: i. e. $f \circ g = f^{\prime} \circ g$. Clearly, $g(V(\sigma^{\prime})) \subset S$. Define the co-restriction of $g$ on $S \subset V(\sigma)$, $g_{S}:V(\sigma^{\prime}) \to S$, $g_{S}(a):=g(a), \forall a \in V(\sigma^{\prime})$. Clearly $g_{S} \in \mor(\sigma^{\prime},\sigma_{S})$ and $g = \iota \circ g_{S}$. Uniqueness of $g_{S}$ holds because $\iota$ is a monomorphism. To summarize, we have shown that the following diagram commutes:
\begin{center}
\begin{tikzcd}
\sigma_{s} \arrow[r,"\iota",hook]
&\sigma \arrow[r, "f", shift left=1.5ex] \arrow[r,"f^{\prime}"']
& \tau \\
\sigma^{\prime} \arrow[u,"g_{S}",dashed]  \arrow[ru,"g"'] &&. 
\end{tikzcd}
\end{center}
\end{proof}
On the contrary, 
\begin{example}
\label{example:graph_coequalizers}
  In the category of simple graphs, coequalizers do not always exist.
\end{example}
\begin{proof}
Let $\sigma=(\{1,2,3\},\{\{1,2\},\{1,3\}\})$ and $\tau(\{1,2,3\},\{\{1,2\},\{1,3\},\{2,3\})$. Define the two graph homomorphisms $f,f^{\prime}:\sigma \to \tau$ as $f(1)=1,f(2)=2,f(3)=3$, and $f^{\prime}(1)=1,f^{\prime}(2)=3,f^{\prime}(3)=2$. Clearly there does not exist a graph $\gamma$ and a graph homorphisms $g$ such that \begin{align*}
\sigma
\mathrel{\mathop{\rightrightarrows}^{f}_{f^{\prime}}} \tau \arr{g} \gamma.\end{align*}
\end{proof}
We now characterize the existence of coequalizers. 
\begin{lemma}
\label{lemma:graph_coequalizers_exist}
Consider \begin{align*}
\sigma
\mathrel{\mathop{\rightrightarrows}^{f}_{f^{\prime}}} \tau \end{align*}
any parallel pair in the category of finite simple graphs. Consider the partition on the vertex set of $\tau$, $P_{V(\tau)}$, induced by the smallest equivalence relation containing $\displaystyle \bigcup_{v \in V(\sigma)} \{(f(v),f^{\prime}(v))\}$.
Consider the graph, possibly with loops,
$$\tau^{\prime}:=(P_{V(\tau)},\{\{p,q\} \in [V]_{\textup{\textbf{multi}}}^{2}|\;\exists x \in p, \exists y \in q\;:\{x,y\} \in E(\tau)\}),$$ where $[V]_{\textup{\textbf{multi}}}^{2}$ denotes the set of multisets of cardinality $2$ with elements in $P_{V(\tau)}$. A coequalizer exists if and only if $\tau^{\prime}$ does not contain loops. The coequalizer is given by $\tau^{\prime}$ together with the map $g:\tau \to \tau^{\prime}$ that takes each vertex to the respective block. This map is surjective and $g(E(\tau)) = E(\tau^{\prime})$.
\end{lemma}
\begin{proof}
When $\tau^{\prime}$ has a loop, we have that a ``proper'' multiset $\{p,p\} \in [V]_{\textbf{multi}}^{2}$, i. e. $\exists\{ x,y\} \subseteq p \subseteq V(\tau)$ such that $\{x,y\} \in E(\tau)$. Now Let $f^{1}:=f$ and $f^{-1}:=f^{\prime}$. If there is a loop then $\exists v_{1},...,v_{n} \in V(\sigma)$ and $\exists e_{1}, ..., e_{n} \in \{-1,1\}$ such that $ x = f^{e_{1}}(v_{1})\sim f^{-e_{1}}(v_{1}) = f^{e_{2}}(v_{2}) \sim f^{-e_{2}}(v_{2}), \ldots, f^{e_{n-1}}(v_{n-1}) \sim f^{-e_{n-1}}(v_{n-1}) = f^{e_{n}}(v_{n}) \sim f^{-e_{n}}(v_{n}) = y$. Since $g\circ f^{e_{i}}(v_{i}) = g\circ f^{-e_{i}}(v_{i})$, for all $i=1,...,n$, then  $g(x)=g(y)$.
Hence, since $\{x,y\} \in E(\tau)$,
$g$ is \emph{not} a graph homomorphism
(i.e. in the category of simple, undirected graphs).

For the other direction, assume now that $\tau^{\prime}$ does not have a loop. We show that for all graphs $\gamma$ and graph homomorphism $h$ from $\tau$ to $\gamma$ such that $h \circ f = h \circ f^{\prime}$, the following diagram commutes:
\begin{center}
\begin{tikzcd}
\sigma\arrow[r, "f", shift left=1.5ex] \arrow[r,"f^{\prime}"']
&\tau \arrow[r,"g"]
\arrow[rd,"h"]
& \tau^{\prime} \arrow[d,"u",dashed] \\
&& \gamma
\end{tikzcd}
\end{center}
where $u$ is the unique map such that $h = u \circ g$. Recall that $g:\tau \to \tau^{\prime}$ is the map taking each vertex to the corresponding block. The equality $g \circ f = g \circ f^{\prime}$ clearly holds. Moreover, $g$ is a graph homomorphism, since 
\begin{align*}
    \{x,y\} \in E(\tau) \implies \{g(x),g(y)\} \in E(\tau^{\prime})
\end{align*}
by definition of $E(\tau^{\prime})$. We then argue as follows: in order for $h$ and $\gamma$ to be a cocone it must be that the partition induced on the vertex set of $\tau$ by the preimages of $h$ is at least as coarse as the one induced by the preimages of $g$. Then we define $u:\tau^{\prime} \to \gamma$ as:
\begin{align*}
u(c):=h\circ g^{-1}(c)
\end{align*}
which is well defined because $\forall c \in V(\tau^{\prime})$, $\exists v \in V(\gamma)$ such that $g^{-1}(c) \subseteq h^{-1}(v)$. It is also a graph homomorphism since
\begin{align*}
    \{p,q\} \in E(\tau^{\prime}) \implies \{u(p),u(q)\} \in E(\gamma)
\end{align*}
which holds because
\begin{align*}
    \{p,q\} \in E(\tau^{\prime}) \implies \exists x \in p, \exists y \in q\;:\{x,y\} \in E(\tau) \implies \{h(x) ,h(y)\} \in E(\gamma).
\end{align*}
and since $h(x) = u(p)$ and $h(y) = u(q)$
\begin{align*}
    \{h(x) ,h(y)\} \in E(\gamma) \implies \{u(p) ,u(q)\} \in E(\gamma).
\end{align*}
Finally $g: \tau \to \tau^{\prime}$ is obviously surjective and for each $\{p,q\}\in E(\tau^{\prime})$, recall that $\exists x \in p, \exists y \in q\;:\{x,y\} \in E(\tau)$ and $p = g(x)$ and $q = g(y)$. Uniqueness of $u$ follows from $g$ being an epimorphism.

\end{proof}
We now give a concrete description of regular monomorphisms and regular epimorphisms in the category of finite simple graphs.

\begin{proposition}
  In the category of finite simple graphs
  $f \in \mor(\sigma,\tau)$ is
  \begin{enumerate}
  
\item a regular monomorphism if and only if the function $f: V(\sigma)\to V(\tau)$ is injective and
      \begin{align*}
        \{i,j\} \in E(\sigma) \Leftrightarrow \{f(i),f(j)\} \in E(\tau),
      \end{align*}
      i.e. if and only if the 'corestricted' map $f: \sigma \to \tau_{f(V(\sigma))}$
      is an isomorphism.
\item a regular epimorphism if and only if the function $f: V(\sigma)\to V(\tau)$ is surjective
      \emph{and} the induced function $f: E(\sigma) \to E(\tau)$ is surjective (\textit{surjective on vertices and edges}).

      \end{enumerate}
\end{proposition}

\begin{remark}
  It is not clear to us whether there exists a useful alternative
  characterization of sections in the category of simple graphs.

  A section is automatically a regular monomorphism, but the converse is not true in general.
  For example, in the category of simple graphs, the
  map taking an edge to one of the edges of a triangle
  is a regular monomorphism but not a split monomorphism.
\end{remark}
\begin{proof}
We provide hands-on proofs.
For high-level proofs of these statements, see \cite{nlabSimpleGraphs}.
  
  \begin{enumerate}
      \item
   Let $f \in \mor(\sigma,\tau)$ be an equalizer of a parallel pair of morphisms $g,g^{\prime} \in \mor(\tau,\gamma)$. Define $S=\{ a \in V(\tau)|g(a)=g^{\prime}(a)\}$.
   Using \cref{lemma:graph_equalizers}, we know that $\tau_{S}$ together with the inclusion $\iota: \tau_{S} \to \tau$
   is also an equalizer. Since two equalizers must be isomorphic, we have that $f = \iota \circ h $, where $h \in \Iso(\sigma,\tau_S)$.
  We now show that $f(V(\sigma)) = S$. If $ v \in S$, then $v \in f(V(\sigma))$, since $\iota(v) = v = f(h^{-1}(v))$. If $v^{\prime} \in f(V(\sigma))$, then $v^{\prime} \in S$, since $\exists v\in V(\sigma)$ such that $v^{\prime} = f(v) = \iota \circ h(v)$. Therefore: $\sigma \cong \tau_{S} = \tau_{f(V(\sigma))}$.
  
  For the other direction, assume that $f: \sigma \to \tau$ is injective such that moreover $f: \sigma \to \tau_{\im f}$ is an isomorphism. We can also assume $f$ not to be surjective, otherwise we would have an isomorphism. It is easy to see that any $f \in \Iso(\sigma,\tau)$ is an equalizer simply by taking the parallel arrows to be both equal to $\id_{\tau}$. Define the graph $\tau^{\prime}$ as follows
  \begin{align*}
    V(\tau^{\prime}) &:= \im f\ \disj \ ( V(\tau) \setminus \im f)\ \disj ( ( V(\tau) \setminus \im f ) \times \{ * \}) \\
    E(\tau^{\prime}) &:=
    \{ \{x,y\}
      \mid
      \{x,y\} \in E(\tau)
      \text{ \underbar{or} }
      x = (x',*), y = (y',*) \text{ with } \{x',y'\} \in E(\tau) \\
      &\qquad \qquad \qquad
      \text{ \underbar{or} }
      x \in V(\tau), y = (y',*) \text{ with } \{x,y'\} \in E(\tau)\}.
  \end{align*}
  Define $g,g^{\prime}: \tau \to \tau^{\prime}$ as $g(v) = v, \forall v \in V(\tau)$
  and
  \begin{align*}
    g'(v) :=
    \begin{cases}
      v &\qquad v \in \im f \\
      (v,*) &\qquad a \in V(\tau)\setminus \im f
    \end{cases}.
    \end{align*}
 From the proof of \cref{lemma:graph_equalizers} we know that $\tau_{S}$ is an equalizer with the inclusion map and $S=\{a \in V(\tau)|g(a)=g(a^{\prime})\}$. Then by construction $S=f(V(\sigma))$ and $\sigma \cong \tau_{f(V(\sigma))} = \tau_{S}$. Let $f_{S}:\sigma \to \tau_{S}$ be the unique map defined as $f_{S}(x):=f(x)$ such that $f = \iota \circ f_{S}$. In particular $f_{S}$ is an isomorphism. Then given any graph $\gamma$ and every $\ell \in \mor(\gamma,\tau)$ such that $g \circ \ell = g^{\prime} \circ \ell$, we can define $u:=f_{S}^{-1} \circ \ell_{S}$, where $\ell_{S}:\gamma \to \tau_{S}$ is defined as $\ell_{S}(v):=\ell(v)$, which is the only map which satisfies $\ell = f \circ u$.

  \item let $f:\tau \to \gamma$ be the coequalizer of
  \begin{align*}
\sigma
\mathrel{\mathop{\rightrightarrows}^{h}_{h^{\prime}}} \tau \end{align*}
  Then we know that $\gamma \cong \tau^{\prime}$ where $(\tau^{\prime},g)$, with $g \in \mor(\tau,\tau^{\prime})$, is the coequalizer defined as in \cref{lemma:graph_coequalizers_exist}. Since $f = u \circ g$ where $u \in \Iso(\tau^{\prime},\gamma)$, both $u$ and $g$ are surjective maps on vertices and edges. It then follows that $f(E(\gamma))=E(\tau^{\prime})$.
  
 For the other direction, let $f:\tau \to \gamma$ be a surjective map, such that the induced map on the edges is also surjective. Consider the partition of the vertices of $\tau$ where each block is given by $f^{-1}(v)$ for all $v \in V(\gamma)$. Let $\sigma:= (V(\tau),\emptyset)$. Now let $h:\sigma \to \tau$ be defined as $h(v):=v$, for all $v \in V(\sigma)$ and $h^{\prime}:\sigma \to \tau$ defined on each block of the partition of the vertices of $\sigma$, $b = \{v_{1},...,v_{n}\}$ as $h^{\prime}(v_{i}):=v_{i}$ if the cardinality of the block $b$ is one, and $h^{\prime}(v_{i}):= v_{i+1}$ for $i=1,...,n-1$ and $h^{\prime}(v_{n}):= v_{1}$, otherwise. Consider the partition on the vertices of $\tau$ induced by the smallest equivalence relation which contains
 \begin{align}
 \label{eq:eq_rel}
 \bigcup_{i \in V(\sigma)}(h(i),h^{\prime}(i)).
 \end{align}
 This partition coincides with the one induced by the preimages of $f$. Indeed if $|f^{-1}(v)|=1$, we have $h(f^{-1}(v)) = f^{-1}(v) = h^{\prime}(f^{-1}(v))$ and $f^{-1}(v) \sim f^{-1}(v)$, and there is no other vertex in $\tau$ which is in relation with $f^{-1}(v)$. Now consider a block $\{b_{1},...,b_{n}\}$ where $|f^{-1}(v^{\prime})|>1$. By construction, since $h$ is the identity on the block and $h^{\prime}$ is a cycle on $\{b_{1},...,b_{n}\}$, it holds that $b_{1}\sim\cdots\sim b_{n}$, and again there is no other vertex in $\tau$ which is in relation with the elements of $f^{-1}(v^{\prime})$. Now a coequalizer exists for
 
  \begin{align*}
\sigma
\mathrel{\mathop{\rightrightarrows}^{h}_{h^{\prime}}} \tau. \end{align*}

using \cref{lemma:graph_coequalizers_exist}: the graph $\tau^{\prime}$ whose vertices are given by the partition induced by \eqref{eq:eq_rel} has no loops since by assumption there are no edges between vertices in the same block. We now show that $\gamma \cong \tau^{\prime}$. $\tau^{\prime}$ together with $g:\tau \to \tau^{\prime}$ (the map that is sending the vertex of $\sigma$ to its block with partition induced by  \eqref{eq:eq_rel}) is a coequalizer. The unique map $u:\tau \to \gamma$ such that $f = u \circ g$ is given by $f \circ g^{-1}$. We now show that $g \circ f^{-1}$ is well defined and a graph homomorphism as well, which means that $f \circ g^{-1}$ is an isomorphism. Well-definiteness comes from the fact the partitions induced by the preimages of $f$ and $g$ are equal: for all $x \in V(\gamma)$ there exists a unique $a \in V(\tau^{\prime})$ such that $g \circ f^{-1}(x) = a$. It is also a graph homomorphism, because given $\{x,x^{\prime}\} \in E(\gamma)$, there exists $\{y,y^{\prime}\} \in E(\tau)$ such that $y = f(x)$ and $y^{\prime} = f(x^{\prime})$ ($f$ is surjective on the edges of $\gamma$) and then we use that $g$ is a graph homomorphism, so that $\{g(y),g(y^{\prime})\} \in E(\tau^{\prime})$. Now let $\theta$ be a graph and let $\ell \in \mor(\tau,\theta)$ be such that  $\ell\circ h = \ell \circ h^{\prime}$. Now $s:=\ell \circ g^{-1}$ is the unique map which satisfies $\ell = s \circ g$. Then $v \in \mor(\gamma, \theta)$ defined as  $v = (\ell \circ g^{-1}) \circ (g \circ f^{-1}) = \ell \circ f^{-1}$, is the unique map which satisfies $\ell = v \circ f$.


  \end{enumerate}
\end{proof}

\begin{proposition}
The category of finite simple graphs is $(\RegEpi,\Mono)$-structured and $(\Epi,\RegMono)$-structured. 
\end{proposition}
\begin{proof}
We show that any graph homomorphism, $f:\sigma \to \tau$ can be written as $g\circ f^{\prime}$, where $f^{\prime}$ is a regular epimorphism and $g$ a monomorphism. Simply take $f^{\prime}: \sigma \to \tau\evaluatedAt{f(V(\sigma)),f(E(\sigma))}$ defined as $f^{\prime}(a):=f(a)$ and $g:\tau\evaluatedAt{f(V(\sigma)),f(E(\sigma))} \to \tau$ defined as $g(a):=a$. Moreover any homomorphism can be written as $h \circ f^{\prime}$ where $f^{\prime}$ is an epimorphism and $h$ a regular monomorphism. Let $f^{\prime}: \sigma \to \tau_{f(V(\sigma))}$ where $f^{\prime}(a):=f(a)$ and $h: \tau_{f(V(\sigma))}\to \tau_{f(V(\sigma))}$, with $h(a):=a$. We then use \cref{prop:rmer}.
\end{proof}

These factorizations are unique in the sense of \cref{proposition:unique_fact}. 
We can now deduce the following statement, which is used in the main text.
\begin{corollary}
  \label{cor:factorization}
  In the category of finite simple graphs
  \begin{enumerate}
    \item

  \begin{align*}
    \lrv{ \lrb{ \phi\in\Mono(\tau,\Lambda) \mid \Lambda_{\im \phi} \cong \sigma }  }
    =
    \frac1{\lrv{\Aut(\sigma)}}\
    \lrv{ \Mono(\tau,\sigma) \cap \Epi(\tau,\sigma) }\
    \lrv{ \RegMono(\sigma,\Lambda) },
  \end{align*}

  \item
  \begin{align*}
    \lrv{ \lrb{ \phi\in\Epi(\tau,\rho) \mid \rho\evaluatedAt{\phi(V(\tau)),\phi(E(\tau))} \cong \sigma }  }
    =
    \frac1{|\Aut(\sigma)|}\
    \lrv{ \RegEpi(\tau,\sigma) }\
    \lrv{ \Mono(\sigma,\rho) \cap \Epi(\sigma,\rho) }
  \end{align*}

    \item 
  \begin{align*}
  \lrv{ \lrb{ \phi\in\Hom(\tau,\Lambda) \mid \Lambda_{\im \phi} \cong \sigma }  }
  =
    \frac1{\lrv{\Aut(\sigma)}}\
    \lrv{ \Epi(\tau,\sigma) }\
    \lrv{ \RegMono(\sigma,\Lambda) }
  \end{align*}

    \item 
  \begin{align*}
    \lrv{ \lrb{ \phi\in\Hom(\tau,\Lambda) \mid \Lambda\evaluatedAt{\phi(V(\tau)),\phi(E(\tau))} \cong \sigma }  }
    =
    \frac1{\lrv{\Aut(\sigma)}}\
    \lrv{ \RegEpi(\tau,\sigma) }\
    \lrv{ \Mono(\sigma,\Lambda) }
  \end{align*}


  \end{enumerate}
\end{corollary}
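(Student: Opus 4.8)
The plan is to prove all four identities by a single template built on the two factorization systems of \cref{prop:factorization} together with the characterizations of (regular) mono- and epimorphisms in the category of finite simple graphs. In each case the left-hand side counts a set $X$ of morphisms $\phi$ subject to an ``image condition'' of the form $\,\cong\sigma$, while the right-hand side (apart from the factor $1/|\Aut(\sigma)|$) counts a set $Y$ of pairs $(a,b)$ of morphisms that compose to such a $\phi$. I would introduce the composition map $\Theta\colon Y\to X$, $(a,b)\mapsto b\circ a$, and show that it is surjective with every fiber a free transitive $\Aut(\sigma)$-set; this yields $|Y|=|\Aut(\sigma)|\,|X|$, which is precisely the claimed identity.

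For the choice of factorization: parts (i) and (iii) involve the \emph{induced} subgraph $\Lambda_{\im\phi}$, so I would use the (epi, regular mono) factorization, writing $\phi=b\circ a$ with $a\in\Epi(\tau,F)$ and $b\in\RegMono(F,\Lambda)$; parts (ii) and (iv) involve the \emph{restricted} subgraph, and I would use the (regular epi, mono) factorization, with $a\in\RegEpi(\tau,F)$ and $b\in\Mono(F,\Lambda)$. The middle object is identified with the relevant subgraph as follows. A regular monomorphism $b$ satisfies $F\cong\Lambda_{\im b}$, and since $a$ is surjective on vertices one has $\im b=\im\phi$, so $\Lambda_{\im\phi}\cong F$, matching the induced-subgraph condition. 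Dually, a regular epimorphism $a$ is surjective on both vertices and edges and a monomorphism $b$ is injective, so $F\cong(\im\phi,\phi(E(\tau)))$; because a homomorphism of simple graphs can never collapse an edge (there are no loops), when $\tau$ has no isolated vertices this subgraph has none either, giving $F\cong\Lambda\evaluatedAt{\phi(E(\tau))}$, matching the restricted-subgraph condition. Thus the image condition becomes $F\cong\sigma$, and after fixing the middle object to be $\sigma$ itself the two factors land in exactly the stated classes: in (i) the constraint $\phi\in\Mono$ forces $a$ to be mono as well (since $b\circ a$ mono implies $a$ mono), whence $a\in\Mono(\tau,\sigma)\cap\Epi(\tau,\sigma)$; in (ii) the constraint $\phi\in\Epi$ forces $b\in\Mono(\sigma,\rho)\cap\Epi(\sigma,\rho)$ (since $b\circ a$ epi implies $b$ epi); and in (iii), (iv) no extra constraint is imposed, leaving $a\in\Epi(\tau,\sigma)$ respectively $a\in\RegEpi(\tau,\sigma)$.

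It then remains to count fibers. Surjectivity of $\Theta$ is immediate from the existence half of \cref{prop:factorization}: any $\phi\in X$ factors through some middle object isomorphic to $\sigma$, and composing with a chosen isomorphism to $\sigma$ produces a pair in $Y$ mapping to $\phi$. For the fibers, the uniqueness half of \cref{prop:factorization} says that any two factorizations of a fixed $\phi$ through $\sigma$ differ by a unique isomorphism of the middle object, i.e.\ by a unique element of $\Aut(\sigma)$; concretely, $\Aut(\sigma)$ acts on the fiber by $\psi\cdot(a,b)=(\psi\circ a,\,b\circ\psi^{-1})$, and this action is free and transitive. Hence each fiber has exactly $|\Aut(\sigma)|$ elements and $|X|=|Y|/|\Aut(\sigma)|$.

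I expect the main obstacle to be the bookkeeping in the middle, namely making the identification of $F$ with the correct subgraph (induced versus restricted) fully rigorous and, in the restricted cases, handling potential isolated vertices; the clean resolution is the no-loop property of simple-graph homomorphisms, which guarantees $\phi(V(\tau))=\bigcup\phi(E(\tau))$ once $\tau$ has no isolated vertices. A secondary delicate point is verifying that the $\Aut(\sigma)$-action on each fiber is genuinely free and transitive, rather than merely bounding fiber sizes by $|\Aut(\sigma)|$; this is exactly the content of the ``unique isomorphism'' clause of \cref{prop:factorization}, and it is what makes the normalization by $1/|\Aut(\sigma)|$ an equality rather than an inequality.
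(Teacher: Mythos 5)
Your proof is correct and takes exactly the route the paper intends: in the paper the statement is left unproved as an immediate consequence of \cref{prop:factorization}, choosing the (epi, regular mono) system for the induced-subgraph cases (i), (iii) and the (regular epi, mono) system for the restricted-subgraph cases (ii), (iv). Your formalization---the composition map being surjective by the existence half of the factorization, with fibers that are free transitive $\Aut(\sigma)$-sets by the uniqueness half, plus the observation that simple-graph homomorphisms cannot collapse edges so the middle object has no isolated vertices when $\tau$ does not---supplies precisely the bookkeeping the paper leaves implicit, including the correct reason the normalization $1/\lrv{\Aut(\sigma)}$ is an exact count rather than a bound.
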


\begin{proof}
We only show iii. Let $f \in \lrb{ \phi\in\Hom(\tau,\Lambda) \mid \Lambda_{\im \phi} \cong \sigma } $. Each $f$ can be factorized as $ f = \iota \circ f^{\prime}$, where $f^{\prime}:\tau \to \Lambda_{\im\phi}$, is defined as $f^{\prime}(v) := f(v)$ and $\iota: \Lambda_{\im\phi} \to \Lambda$ as $\iota(v):=v$. Let $f = m \circ e$ with $e \in \Epi(\tau,\sigma)$ and $m \in \RegMono(\sigma,\Lambda)$ be another factorization of $f$. Then, from \cref{proposition:unique_fact,} we know that there exists a unique $h \in \Iso(\Lambda_{\im\phi},\sigma)$ such that:

\begin{center}
\begin{tikzcd}
\tau \arrow[r, "f^{\prime}"] \arrow[d,"e"']
& \Lambda_{\im\phi} \arrow[d, "\iota"] \arrow[dl,"h"'] \\
\sigma \arrow[r, "m"']
&  \Lambda
\end{tikzcd}
\end{center}
the diagram commutes, which means that $e = h \circ f^{\prime}$ and $m = \iota \circ h^{-1}$. For any $e \in \Epi(\tau,\sigma)$ and $m \in \RegMono(\sigma,\Lambda)$ we have that $m \circ e = f \in \lrb{ \phi\in\Hom(\tau,\Lambda) \mid \Lambda_{\im \phi} \cong \sigma }$. Define the map, $\psi$:
\begin{align*}
   \psi: \Epi(\tau,\sigma)  \times \RegMono(\sigma,\Lambda) &\to \lrb{ \phi\in\Hom(\tau,\Lambda) \mid \Lambda_{\im \phi} \cong \sigma }\\
   \psi((e,m)) &:= m\circ e
\end{align*}
This map is clearly surjective and for each $f \in \lrb{ \phi\in\Hom(\tau,\Lambda) \mid \Lambda_{\im \phi} \cong \sigma }$, $|\psi^{-1}(f)| = |\Aut(\sigma)|$.
\end{proof}
\end{document}